\documentclass[11pt]{amsart}

\usepackage[margin=1.15in]{geometry}

\usepackage{cancel}
\usepackage{amssymb}
\usepackage{enumerate}
\usepackage{hyperref}
\usepackage{enumitem}
\usepackage{amsmath}
\usepackage{latexsym}
\usepackage{extarrows}
\usepackage{graphicx}
\usepackage{txfonts}
\usepackage{mathtools}
\usepackage{booktabs}
\usepackage{float}
\usepackage{bm}
\usepackage[normalem]{ulem}
\usepackage{soul}
\usepackage{tikz-cd}
\usepackage{xcolor}
\usepackage{quiver}
\usepackage{comment}
\usepackage{graphics}
\usepackage{float}
\usepackage{relsize}
\usepackage{bm}
\usepackage{graphicx}
\usepackage{tikz-cd}

\usepackage{xcolor}
\usepackage{todonotes}
\usepackage{comment}

\usepackage[
  backend=biber,
  style=numeric,
  maxnames=10
]{biblatex}
\numberwithin{equation}{section}

\theoremstyle{plain}
\newtheorem{thm}{Theorem}[section]
\newtheorem{prop}[thm]{Proposition}
\newtheorem{lemma}[thm]{Lemma}
\newtheorem{cor}[thm]{Corollary}
\newtheorem{conj}[thm]{Conjecture}
\newtheorem{question}{Question}

\theoremstyle{definition}
\newtheorem{dfn}[thm]{Definition}
\newtheorem{ex}[thm]{Example}
\newtheorem{remark}[thm]{Remark}
\newtheorem*{thm*}{Theorem}

\theoremstyle{remark}

\newcommand{\C}{\mathbb{C}}
\newcommand{\Q}{\mathbb{Q}}
\newcommand{\Z}{\mathbb{Z}}
\newcommand{\N}{\mathbb{N}}
\newcommand{\E}{\mathbb{E}}
\newcommand{\XX}{\mathrm{X}}

\newcommand{\inner}[2]{\langle #1, #2 \rangle}
\newcommand{\ind}[1]{\mathbf{1}_{#1}}

\newcommand{\mps}{(X,\mathcal{B},\mu,T)}

\newcommand{\IP}{\mathrm{IP}}
\newcommand{\fol}{(\Phi_N)_{N\in \N}}
\renewcommand{\ni}{(n_i)_{i\in\N}}
\newcommand{\IPr}{\mathrm{IP}_{\mathrm{rat}}}

\newcommand{\avgip}{\lim_{N\rightarrow \infty} \E_{n\in \IP_{\Phi_N}(\ni)}}

\newcommand{\rat}{\mathrm{rat}}
\newcommand{\Krat}{K_{\mathrm{rat}}}

\title[Polynomial recurrence along IP sets]
{$\IPr$-Polynomial recurrence and large intersections}
\author{Borys Holikov}
\address{Department of Mathematics, Bar-Ilan University, Ramat Gan, Israel}
\email{holikob@biu.ac.il}
\author{Or Shalom}
\address{Department of Mathematics, Bar-Ilan University, Ramat Gan, Israel}
\email{Or.shalom@math.biu.ac.il}

\date{}

\begin{document}

\begin{abstract}
Let $p_1,...,p_k$ be a rationally independent sequence of integer valued polynomials. We show that for all $E\subseteq \mathbb{N}$, every F\o lner sequence $\Phi$, and every $\varepsilon>0$, the set
$$\left\{n\in \mathbb{N} : d_{\Phi}\left(E\cap(E+p_1(n))\cap\cdots\cap  (E+p_k(n))\right) > d_{\Phi}(E)^{k+1}-\varepsilon\right\}$$ intersects every $\IP$ generated by a sequence with rational spectrum. Our methods involve the study of the characteristic factors for multiple ergodic polynomial averages along IPs (see Theorem~\ref{polychar}). In particular, we also prove a pointwise convergence theorem for polynomial averages along IPs with rational spectrum (see Theorem~\ref{bfromleibman}), generalizing a well known result of Leibman from \cite{leibmanpointwise}.

\end{abstract}

\maketitle

\section{Introduction}
\subsection{Motivation and large intersections}
Furstenberg's seminal proof of Szemer\'edi's theorem established a profound connection between ergodic theory and additive combinatorics. 

Within this ergodic framework, Furstenberg established the following multiple recurrence theorem \cite{furstenberg1977ergodic}:
\begin{thm}[Furstenberg multiple recurrence theorem]
For any system $\mps$, any set $A \in \mathcal{B}$ with $\mu(A) > 0$, and any $k\in \N$, there exists a constant $c>0$ such that the set of return times 
$$
\left\{n\in \N : \mu(A\cap T^{-n}A\cap \dots\cap T^{-kn}A) \ge c\right\}
$$
is syndetic \textup{(}i.e., it has bounded gaps\textup{)}.
\end{thm}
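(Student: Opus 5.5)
The plan is to deduce syndeticity from Furstenberg's positivity of Cesàro averages, which I take as the core input: for $\mu(A)>0$ and $k\in\N$,
$$
\liminf_{N\to\infty}\frac{1}{N}\sum_{n=1}^{N}\mu(A\cap T^{-n}A\cap\dots\cap T^{-kn}A)>0 .
$$
Syndeticity does not follow from this alone. A set of positive lower density can still have arbitrarily long gaps, so some uniformity is needed. The cleanest route is to prove the stronger uniform statement
$$
\liminf_{N-M\to\infty}\frac{1}{N-M}\sum_{n=M}^{N-1}\mu(A\cap T^{-n}A\cap\dots\cap T^{-kn}A)=:2c>0 .
$$

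For the uniform statement I would follow Furstenberg's structure theorem. First reduce to the ergodic case by ergodic decomposition and Fatou's lemma. If $A$ has positive measure, then a positive-measure set of ergodic components $\mu_x$ satisfies $\mu_x(A)>\mu(A)/2$, and a uniform lower bound for each such component integrates to a uniform lower bound for $\mu$. Then write the system as a tower $X_0\to X_1\to\cdots\to X_\eta=X$, where the factors are successive compact (isometric) extensions, possibly transfinite, and $X$ is a weakly mixing extension of the maximal distal factor. The multiple recurrence property with uniform averages, call it property SZ, is then verified in three steps. It holds for the trivial system. It passes through compact extensions, via a van der Waerden coloring argument applied to the finitely many $\varepsilon$-approximations of $\ind{A}$ along fibers. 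It passes through weakly mixing extensions, using the relative van der Corput lemma to show that
$$
\frac{1}{N-M}\sum_{n}\int \prod_i T^{in}f_i\,d\mu-\frac{1}{N-M}\sum_{n}\int\prod_i T^{in}\E(f_i\mid Y)\,d\mu\to 0
$$
uniformly in $M$. Finally, SZ passes to inverse limits by approximating $A$ by a set measurable with respect to a factor.

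In each step I would track uniformity in the starting point $M$. The van der Corput inequality and the van der Waerden argument both work for arbitrary intervals $[M,N)$, so I expect no change beyond bookkeeping there.

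Given the uniform bound, suppose $S=\{n:\mu(A\cap\dots\cap T^{-kn}A)\ge c\}$ is not syndetic. Then there are intervals $[M_j,N_j)$ with $N_j-M_j\to\infty$ that avoid $S$. On those intervals every term is below $c$, so the average over $[M_j,N_j)$ is at most $c<2c$, a contradiction. Hence $S$ is syndetic.

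I expect the main obstacle to be the compact extension step. There one must find, for sets $B$ of positive measure in $Y$, fiberwise almost-periodic approximants. One then uses van der Waerden's theorem, in its finite form with bounds uniform in the interval position, to produce many $n$ in every long interval for which the fibers of $A$ over $Y$ return simultaneously. This makes the fiber intersection large, and integrating gives a positive proportion in every long interval. The weakly mixing step is technical but standard; the compact step is where the combinatorics enters and where the constant $c$ is produced.
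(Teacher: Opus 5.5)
The paper gives no proof of this statement. It is quoted as background from Furstenberg's 1977 paper, so there is no in-paper argument to compare with. Your architecture is the standard Furstenberg--Katznelson--Ornstein one, and your last step is correct. Writing $F(n)=\mu(A\cap T^{-n}A\cap\dots\cap T^{-kn}A)$, if $\liminf_{N-M\to\infty}\frac{1}{N-M}\sum_{n=M}^{N-1}F(n)>0$, then $\{n:F(n)\ge c\}$ is syndetic.

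The gap is your claim that uniformity in $M$ passes through the compact-extension step ``by bookkeeping.'' Take $y\in B\cap T^{-m}B\cap\dots\cap T^{-Lm}B$, with $L$ a van der Waerden number. The monochromatic progression $a+\{0,d,\dots,kd\}\subseteq\{0,\dots,L\}$ has a step $d=d(y,m)$ that you do not control. The return time it produces for $A$ is therefore $dm$, not $m$. After pigeonholing over the finitely many pairs $(a,d)$, the argument gives
\[
\sum_{d=1}^{L}F(dm)\;\ge\;c_1\,\nu\bigl(B\cap T^{-m}B\cap\dots\cap T^{-Lm}B\bigr)\qquad\text{for every } m .
\]
Averaging over $m\in[1,N)$ bounds the average of $F$ over $[1,LN)$ from below, which is why this step is normally run with initial segments. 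Averaging over $m\in[M,N)$ only controls $F$ on the dilates $d\cdot[M,N)$, $1\le d\le L$. When $M$ is much larger than $N-M$, these dilates are pairwise disjoint and disjoint from $[M,N)$.

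In fact, for $L\ge 2$ the displayed inequality is consistent with $\{n:F(n)\ge c\}=\bigcup_j[4^j,2\cdot 4^j)$: use $d=1$ or $d=2$. That set has positive lower density but is not syndetic. Van der Waerden's theorem lets you translate the \emph{starting point} of the progression, but the issue is where the \emph{common difference} lands. So ``bounds uniform in the interval position'' does not address it. As written, your argument yields positive lower density of the return times, not syndeticity.

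To close the gap you need an extra input.

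\emph{Host--Kra convergence.} By Host--Kra, $\frac{1}{N-M}\sum_{n=M}^{N-1}T^nf_1\cdots T^{kn}f_k$ converges in $L^2$ as $N-M\to\infty$. The reduction to the characteristic factor is a van der Corput argument valid on arbitrary intervals, and orbits on nilmanifolds are well distributed in their orbit closures. The limit is computed on the nilfactor, so it does not depend on the intervals. Hence $\lim_{N-M\to\infty}\frac{1}{N-M}\sum_{n=M}^{N-1}F(n)=\lim_{N\to\infty}\frac1N\sum_{n=1}^{N}F(n)$. This is positive by the non-uniform theorem, which your structure argument does prove. Your final paragraph then applies verbatim.

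\emph{The classical $\IP$ route.} Show that $\{n:F(n)\ge c\}$ meets every $\IP$ set. This uses Hales--Jewett on $\IP$ cubes in place of van der Waerden, together with a lower bound uniform over $\IP$ sets. Such a set is syndetic, because the complement of a non-syndetic set is thick and thick sets contain $\IP$ sets.

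A smaller point concerns the inverse-limit step. Approximating $A$ in measure by a factor-measurable set is not enough, since the recurrence constant of the approximant is not controlled by its measure. Use $\tilde A=\{E(\ind{A}\mid X_\alpha)>1-\frac{1}{2(k+1)}\}$ instead. Conditioning on $X_\alpha$ then gives $\mu(A\cap\dots\cap T^{-kn}A)\ge\frac12\,\mu(\tilde A\cap\dots\cap T^{-kn}\tilde A)$.
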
 

It was shown in \cite{MR1784213} by Bergelson et al., that a value of the constant $c$ which depends only on $\mu(A)$ and $k$ exists. The optimal combinatorial scenario is referred to as the \textit{large intersection property}, which occurs when $c$ can be chosen arbitrarily close to the theoretical limit of independent intersections, specifically $\mu(A)^{k+1}-\varepsilon$ for any $\varepsilon > 0$. Relying on the Host and Kra \cite{hostkranil} and Ziegler \cite{ziegler} structural theories, in \cite{largelinear} Bergelson, Host, and Kra demonstrated that the large intersection property holds for linear configurations of short lengths in ergodic systems:

\begin{thm}[Large intersection property along short arithmetic progressions]
Let $\mps$ be an ergodic measure preserving system, let $A \in \mathcal{B}$ with $\mu(A) > 0$, and let $k=2 \text{ or } 3$. Then for any $\varepsilon>0$, the set 
$$
\{n\in \N : \mu(A\cap T^{-n}A\cap \dots\cap T^{-kn}A) \ge \mu(A)^{k+1}-\varepsilon\}
$$
is syndetic.
\end{thm}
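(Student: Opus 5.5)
The plan follows the Bergelson--Host--Kra strategy. First I reduce to a structured factor, then I prove a lower bound on averages of $\mu(A\cap T^{-n}A\cap\dots\cap T^{-kn}A)$ over $n$, and finally I pass from averages to syndeticity.

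\textbf{Step 1 (reduction to a nilfactor).} By the Host--Kra and Ziegler structure theory, the averages
$$\frac1N\sum_{n=M}^{M+N-1}\int f_0\, T^nf_1\cdots T^{kn}f_k\,d\mu$$
are controlled, uniformly in $M$, by the Host--Kra factor $Z_{k-1}$. So we may replace $\ind A$ by $f=\E(\ind A\mid Z_{k-1})$, up to an error tending to $0$ as $N\to\infty$ uniformly in $M$. The factor $Z_{k-1}$ is an inverse limit of $(k-1)$-step nilsystems. Approximating in $L^2$ by a finite level, we may assume $X$ is an ergodic $(k-1)$-step nilsystem $G/\Gamma$ with rotation by $\tau$, and that $f$ is continuous with $0\le f\le1$ and $\int f=\mu(A)$.

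\textbf{Step 2 (restricting to $n$ near a subgroup).} Let $Z=Z_1$ be the Kronecker factor, a compact abelian group, and consider $n$ such that $n\alpha$ lies in a small neighbourhood $U$ of $0$ in $Z$. The set of such $n$ is a Bohr set, hence syndetic. I will show
$$\liminf_{N\to\infty}\inf_M \frac{1}{|\{n\in[M,M+N):n\alpha\in U\}|}\sum_{\substack{M\le n<M+N\\ n\alpha\in U}}\int f\,T^nf\cdots T^{kn}f\,d\mu\;\ge\;\mu(A)^{k+1}-\varepsilon$$
for $U$ small enough. Once this holds, the set of $n$ with $\mu(A\cap T^{-n}A\cap\dots\cap T^{-kn}A)\ge \mu(A)^{k+1}-2\varepsilon$ has positive density inside every long interval of the Bohr set. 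A bounded-gap argument then gives syndeticity: an average uniform in $M$ forces a good $n$ in every window of length $N$.

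\textbf{Step 3 (computing the limit for $k=2,3$).} By Leibman's equidistribution theorem on nilmanifolds, the weighted averages converge to an integral over a sub-nilmanifold of $X^{k+1}$. As $U$ shrinks to $\{0\}$, this limit becomes the integral of $f\otimes\cdots\otimes f$ against a measure on configurations $(x,\tau^nx,\dots,\tau^{kn}x)$ constrained to have trivial Kronecker coordinate.

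For $k=2$, the system $Z_1$ is a rotation and the limit is $\int f\cdot \E(f\mid Z)^2\,d\mu$ in a suitable form, which is $\ge(\int f)^3$ by Cauchy--Schwarz and Jensen.

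For $k=3$, $X$ is a $2$-step nilsystem. The limit takes the form $\int_{Z}\int \prod_{i}$ of $f$ evaluated along the fiber over $z$, with the $Z$-coordinate fixed. Writing $F_z$ for the restriction of $f$ to the fiber over $z$, which is a $2$-step structure over a torus, the limit is $\int_Z \|F_z\|$ of a Gowers-type integral. This is bounded below by $\int_Z(\int F_z)^4\ge(\int f)^4$ by the Gowers--Cauchy--Schwarz positivity together with Hölder.

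\textbf{Main obstacle.} The hardest part is the $k=3$ computation: identifying Leibman's limit measure explicitly on a $2$-step nilmanifold and recognizing it as a positive Gowers-type expression on each fiber. I would first handle the case where $X$ is a Heisenberg-type skew product $(x,y)\mapsto(x+\alpha,y+x)$ by direct Fourier computation. I would then reduce the general $2$-step case to this one via the description of $Z_2$ as an inverse limit of such extensions.
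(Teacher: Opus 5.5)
The paper does not prove this statement. It is quoted as background from Bergelson--Host--Kra \cite{largelinear}, so there is no internal argument to compare against.

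Your skeleton is essentially theirs: control by $Z_{k-1}$ for uniform averages, Bohr-set weights, an explicit limit on a nilsystem, a positivity inequality, and then ``uniform in $M$'' $\Rightarrow$ syndetic. Steps 1--2 and the case $k=2$ are fine, with two small remarks.
\begin{enumerate}
\item The Bohr weights must be built into the characteristic-factor statement. Replace $\ind{U}$ by a continuous $\phi$ on $Z$ and expand $\phi$ in characters. Then absorb each $e(n\beta)$ ($\beta$ an eigenvalue with eigenfunction $\chi$, $|\chi|=1$) via $e(n\beta)\int f_0\,T^nf_1\cdots T^{kn}f_k\,d\mu=\int \overline{\chi}f_0\,T^n(\chi f_1)\cdots T^{kn}f_k\,d\mu$. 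This is the same trick as in Proposition~\ref{ratkroncharpsi}.
\item For $k=2$ the reduced system is the Kronecker system itself. The limit is then just $\int f^3\,d\mu\ge(\int f\,d\mu)^3$ by continuity of translation, with no need for Leibman's theorem; your ``$\int f\cdot E(f\mid Z)^2$'' is the same quantity.
\end{enumerate}

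The real gap is the $k=3$ lower bound. This is the heart of the theorem, and you only sketch it with a mechanism that does not apply. The fiber expression is not a Gowers-type (parallelogram) average. So neither Gowers--Cauchy--Schwarz, which gives upper bounds, nor monotonicity of Gowers norms yields the lower bound you need.

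Concretely, on a $2$-step nilsystem the closure of $(x,\tau^nx,\tau^{2n}x,\tau^{3n}x)$ is an orbit of the group $\{(g_0,g_0g_1,g_0g_1^2g_2,g_0g_1^3g_2^3)\}$ with $g_2\in G_2$. Conditioning on trivial Kronecker coordinate forces $g_1$ into $G_2$ (modulo the lattice). The limit therefore places the four points at $y,\ y+v,\ y+2v+u,\ y+3v+3u$ in the vertical group $V$ of the fiber over $z$, with $(y,v,u)$ Haar on $V^3$. You can check this on $(x,y)\mapsto(x+\alpha,y+x)$, where $v=nx+\binom n2\alpha$ and $u\equiv n^2\alpha$.

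In coordinates $a=y$, $b=y+v$, $c=y+2v+u$, the fourth point is $a-3b+3c$. Substituting $a=w+3b$ gives
\[
\int_{V^3}F_z(a)F_z(b)F_z(c)F_z(a-3b+3c)\,da\,db\,dc=\int_V G_z(w)^2\,dw,\qquad G_z(w)=\int_V F_z(w+3b)F_z(b)\,db,
\]
which equals $\sum_\xi|\widehat{F_z}(\xi)|^2|\widehat{F_z}(3\xi)|^2$. Since $\int_V G_z=(\int_V F_z)^2$, Cauchy--Schwarz gives a lower bound of $(\int_V F_z)^4$. Jensen over $Z$ then gives $\ge(\int f\,d\mu)^4$.

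This also makes your planned detour through the Heisenberg skew product unnecessary. That reduction is not accurate anyway: $Z_2$ is an inverse limit of general $2$-step nilsystems, not of that particular extension. Do the computation in the vertical group directly. With this step inserted, plus the routine continuity of the fiber measures needed to let $U$ shrink after $N\to\infty$, the argument is complete.
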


They furthermore showed that for $k \ge 4$, the set of large intersections may be empty. To extend these structural properties beyond linear progressions, one considers polynomial sequences. Specifically, we study \textit{rationally independent integer polynomials}, which are families of polynomials such that no non-trivial linear combination of them over the rationals is a constant. For such families, Frantzikinakis and Kra \cite{indepoly} established that the large intersection property is fully recovered even without the ergodicity assumption:

\begin{thm}[Large intersections for polynomial sequences]\label{largeFK}
Let $k\geq 1$ be arbitrary. Let $\mps$ be a measure preserving system, $A\in \mathcal{B}$ with $\mu(A)>0$, and $p_1,\dots,p_k$ be a family of rationally independent, integer-valued polynomials with $p_i(0) = 0$ for all $i = 1,\dots, k$. Then for any $\varepsilon>0$, the set 
\begin{equation}\label{polylargeintset}
\left\{n\in \N : \mu(A\cap T^{p_1(n)}A\cap\dots \cap T^{p_k(n)}A) > \mu(A)^{k+1}-\varepsilon\right\}
\end{equation}
is syndetic.
\end{thm}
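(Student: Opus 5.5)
The plan follows the Frantzikinakis--Kra strategy: reduce to a structured factor on which the intersections can be computed directly, then extract the syndetic set from a positive-density averaging statement. Write $f=\ind{A}$. The key input is that for a rationally independent family $p_1,\dots,p_k$ the rational Kronecker factor $\Krat$, spanned by the eigenfunctions of $T$ with eigenvalues that are roots of unity, is characteristic for the averages
$$\frac{1}{|\Phi_N|}\sum_{n\in \Phi_N} T^{p_1(n)}f_1\cdots T^{p_k(n)}f_k$$
in $L^2(\mu)$, uniformly over F\o lner sequences $\Phi$. I will obtain this in two steps. First, a PET induction and van der Corput argument shows that some Host--Kra factor $\mathcal{Z}_s$ is characteristic; this works in the nonergodic case through the ergodic decomposition. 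Second, on nilsystems I use Leibman's equidistribution theorem for polynomial orbits. Rational independence forces the orbit of $(p_1(n)a,\dots,p_k(n)a)$ along an arithmetic progression $n\equiv r \pmod q$ to equidistribute in the product of the connected components, so only the finite-order (rational) part of the nilsystem survives.

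Next I restrict $n$ to the progression $q\N$ and let $q$ run over $m!$ for large $m$. Since $p_i(0)=0$, we have $p_i(qn)\equiv 0 \pmod q$. Hence if $g$ is measurable with respect to the $q$-periodic part of $\Krat$, then $T^{p_i(qn)}g=g$. Let $\tilde f_q=\E(f\mid \mathcal{K}_q)$, where $\mathcal{K}_q$ is the factor generated by eigenfunctions with eigenvalues that are $q$-th roots of unity. By characteristicness, applied along the progression $q\N$ (rescaled polynomials $p_i(qn)$ are still rationally independent), and by the convergence $\tilde f_{m!}\to \E(f\mid\Krat)$ in $L^2$, I obtain
$$\liminf_N \frac{1}{|\Phi_N|}\sum_{n\in\Phi_N}\mu\bigl(A\cap T^{p_1(qn)}A\cap\cdots\cap T^{p_k(qn)}A\bigr)\ \ge\ \int f\,\tilde f_q^{\,k}\,d\mu-\varepsilon/2$$
for $q=m!$ with $m$ large. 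Here the error from replacing $f$ by $\tilde f_q$ in the last $k$ slots is handled by characteristicness together with a telescoping bound. Since $\int f\,\tilde f_q^k=\int \tilde f_q^{k+1}\ge(\int\tilde f_q)^{k+1}=\mu(A)^{k+1}$ by Jensen/H\"older, the average is at least $\mu(A)^{k+1}-\varepsilon/2$.

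To get syndeticity rather than positive density, I argue by contradiction. If the set in \eqref{polylargeintset} were not syndetic, its complement would contain arbitrarily long intervals. These intervals form a F\o lner sequence $\Phi$ on which the correlation stays $\le\mu(A)^{k+1}-\varepsilon$. The subset $q\N\cap\Phi_N$, normalized, is again essentially F\o lner for averages along $q\N$, which contradicts the bound above because that bound was uniform in the F\o lner sequence.

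The main obstacle is the characteristic-factor step, specifically showing that the \emph{rational} Kronecker factor suffices uniformly over F\o lner sequences and without ergodicity. In the nilsystem computation, rational independence has to be translated into equidistribution on the connected component. I would first reduce to the case where each ergodic component is a connected-component-extended nilsystem, and use Leibman's criterion that a polynomial sequence is equidistributed iff its projection to the horizontal torus is. Independence of the $p_i$ modulo constants prevents any nontrivial horizontal character from annihilating the joint orbit on the identity component.
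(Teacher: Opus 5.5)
The paper does not prove this statement; it quotes it from Frantzikinakis--Kra \cite{indepoly}. Your outline is essentially their argument, and it is also the skeleton the paper adapts for Theorem~\ref{main:thm}:
\begin{itemize}
\item PET shows that a Host--Kra factor is characteristic;
\item Leibman equidistribution on nilsystems, together with Theorem~\ref{totergwd} on connected components, upgrades this to $\Krat$;
\item $E(\ind{A}\mid \Krat)$ is approximated by a $K_r$-measurable function, and Jensen's inequality finishes.
\end{itemize}
The paper departs from your route only where the IP setting forces it. Residue classes of an IP are not IPs, so it cannot restrict to a progression $q\N$ as you do; it weights by $\psi_r^Q$ instead. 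It also shifts by $n^b$, but only because its IP version of PET (Theorem~\ref{polychar}) needs nonlinear polynomials, which ordinary PET does not. Your thick-complement argument for syndeticity is the standard one and is fine.

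There is one slip to repair. The claim ``since $p_i(0)=0$, $p_i(qn)\equiv 0 \pmod q$'' is false for integer-valued polynomials with non-integer coefficients. For example, take $p_1(n)=\binom{n}{2}$ and $q=m!$, which is even. Then $p_1(q)=q(q-1)/2\not\equiv 0\pmod q$. So $T^{p_1(qn)}\tilde f_q\neq \tilde f_q$ in general, already for the rotation on $\Z/q\Z$ with $A$ a single point.

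The fix is immediate. Let $d=\max_i \deg p_i$. Then $d!\,p_i$ has integer coefficients and zero constant term, so $p_i(d!\,q\,n)\equiv 0\pmod q$. Run your argument along the progression $d!\,q\,\N$ instead of $q\N$. Alternatively, stay on $q\N$ and use $E(f\mid K_{q/d!}(X))$, noting that $K_{m!/d!}(X)$ still increases to $\Krat(X)$. With either change, the rest of your proof goes through unchanged.
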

Note that here and in other results of this paper, it is impossible to get a similar result without the $\varepsilon$ error (cf., \cite{underrecurrent}). \\
Multiple generalizations of this theorem that are not directly related to this paper exist, see e.g., \cite{ABB,shalom2,ABS,shalomvectorspace} for the linear case in general countable abelian groups, \cite{AckelsbergBergelson} for a version of this result in number fields, \cite{BLcubic} for cubic averages, \cite{FranKuca} for several commuting transformations, and \cite{moreexamples,Hardyexamples,jerg} for Hardy sequences. Some of these assume additional conditions on the system.

\subsection{$\IP$ configurations and sequences with rational spectrum}

Another fundamental approach to characterizing the ``largeness'' of a subset of $\N$ relies on its intersections with structured families of sets. Given an infinite sequence of natural numbers $\ni$, in the literature the \textit{$\IP$} generated by it, denoted $\IP(\ni)$, is defined as the set of all finite sums of distinct elements from the sequence. However, here we will need a somewhat more general definition (see Definition \ref{incfol}). Moreover, a subset of $\N$ is \textit{$\IP^*$} if it intersects every $\IP(\ni)$ non-trivially.

In \cite{ipszemeredi}, Furstenberg and Katznelson proved the $\IP$-Szemer\'edi theorem, yielding the following strong recurrence property:

\begin{thm}
For any ergodic system $\mps$, any set $A \in \mathcal{B}$ with $\mu(A) > 0$, and any $k\in \N$, the set 
$$
\{n\in \N : \mu(A\cap T^{-n}A\cap \dots\cap T^{-kn}A) > 0\}
$$
is $\IP^*$. 
\end{thm}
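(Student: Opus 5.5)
This is the Furstenberg--Katznelson $\IP$-Szemer\'edi theorem, so the plan follows their route. First I would derive it from the $\IP$ multiple recurrence theorem for commuting transformations, stated along $\IP$-systems, and then take all the maps to be powers of $T$. Concretely, suppose $\IP(\ni)$ is given and, for contradiction, $\mu(A\cap T^{-n}A\cap\dots\cap T^{-kn}A)=0$ for every $n\in\IP(\ni)$. For a finite nonempty $\alpha\subset\N$ write $n_\alpha=\sum_{i\in\alpha}n_i$. Then $\alpha\mapsto T^{n_\alpha}$ is an $\IP$-system, meaning $T^{n_{\alpha\cup\beta}}=T^{n_\alpha}T^{n_\beta}$ whenever $\alpha\cap\beta=\emptyset$. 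The configurations $T^{jn_\alpha}$ for $j=1,\dots,k$ are $\IP$-systems as well, and they commute.

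The core step is to prove, by induction on the structure of the system, that for every $A$ with $\mu(A)>0$ there are $c>0$ and an $\IP$-subring $\mathcal F^{(1)}\subseteq\mathcal F$ (the finite unions generated by some disjoint sequence $\alpha_1<\alpha_2<\cdots$) such that
$$\mu\bigl(A\cap T^{-n_\alpha}A\cap\dots\cap T^{-kn_\alpha}A\bigr)>c\qquad\text{for all }\alpha\in\mathcal F^{(1)}.$$
This contradicts the assumption, since each such $n_\alpha$ lies in $\IP(\ni)$.

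The structure theory runs along $\IP$-limits. By Hindman's theorem, or equivalently by passing to an idempotent ultrafilter $p$ on $\mathcal F$, every bounded sequence indexed by $\mathcal F$ has $\IP$-limits along a subring. In particular $P=p\text{-}\lim_\alpha T^{n_\alpha}$ exists in the weak operator topology and is an orthogonal projection; this is the $\IP$ analogue of the mean ergodic theorem. This yields a dichotomy for any extension $X\to Y$. Either the extension is $\IP$-compact relative to $Y$, meaning orbits of functions are relatively totally bounded in a fiberwise sense along a subring, or it contains an intermediate extension that is $\IP$-weak mixing relative to $Y$. The argument then has three steps:
\begin{enumerate}
\item \emph{Weak mixing.} Relative $\IP$-weak mixing lifts the multiple recurrence property. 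This uses a van der Corput lemma along $\IP$-sets and the fact that $p\text{-}\lim$ of $\langle T^{n_\alpha}f,g\rangle$ factors through $Y$ when the extension is relatively weak mixing.
\item \emph{Compactness.} $\IP$-compact extensions lift recurrence. This is done with a coloring argument, using Hales--Jewett or its $\IP$ version, the Milliken--Taylor theorem, applied to finitely many $\epsilon$-approximating fiber functions.
\item \emph{Chains.} The property passes to inverse limits, and a Zorn's lemma argument reaches all of $X$, starting from the trivial factor.
\end{enumerate}

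The main obstacle is the compact-extension step. There one must choose, uniformly in $\alpha$ along a subring, a set $B\subseteq A$ of positive measure on which the fiber orbits are $\epsilon$-close to a finite set. One then colors $\mathcal F$, or $k$-tuples of sets within it, by which approximants occur, and extracts a monochromatic $\IP$-subring via Milliken--Taylor. The aim is that the recurrence in the base, applied to the ``colored'' set, forces every point of the $k+1$ fiber images to stay inside $A$. Keeping all choices inside a single subring, and handling the measurability of the selection, is where the work lies. I would first attempt it by adapting Furstenberg's compact-extension argument, replacing syndeticity with $\IP$-subrings.
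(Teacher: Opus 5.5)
The paper does not prove this statement; it quotes it from Furstenberg--Katznelson, so the comparison is with their argument. Your outline has the right overall architecture. However, the structure theory you propose is wrong as stated and, more seriously, too coarse for IP-systems.

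\emph{The dichotomy is reversed.} What is true, and what the Zorn step needs, is this: either $X\to Y$ is relatively mixing, or some intermediate $Y\subsetneq Z\subseteq X$ is a nontrivial compact extension of $Y$. Your version already fails for the skew product $(x,y)\mapsto(x+\alpha,y+x)$ on $\mathbb T^2$ over the trivial factor. That system is not compact, yet each of its nontrivial factors has eigenfunctions, which IP-limits fix, so none is IP-mixing.

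\emph{Even the correct two-way dichotomy breaks down for IP-systems.} Unlike the classical case, mixing along $T^{n_\alpha}$ does not pass to $T^{2n_\alpha}$. Take $n_j=2^{j^2}$ and let $\sigma$ be the law of $t=\sum_j d_j/(2n_j)$, with $d_j$ i.i.d.\ fair bits. Then $e^{2\pi i n_\alpha t}=e^{2\pi i\delta_\alpha}\prod_{i\in\alpha}(-1)^{d_i}$, with $\delta_\alpha\to0$ uniformly in $t$ as $\min\alpha\to\infty$. Hence $\hat\sigma(n_\alpha)\to0$ while $\hat\sigma(2n_\alpha)\to1$. In the Gaussian system whose spectral measure is the symmetrization of $\sigma$, $T^{n_\alpha}\to0$ weakly on $L^2_0$ while $T^{2n_\alpha}\to\mathrm{Id}$ strongly. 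So no nontrivial factor is compact along $T^{n_\alpha}$ (on any subring), and none is mixing along $T^{2n_\alpha}$.

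For $A\cap T^{-n_\alpha}A\cap T^{-2n_\alpha}A$ this means neither of your lifting steps can move the chain off the trivial factor. Your step 1 fails because its van der Corput argument needs mixing along the difference $2n_\alpha$. Your step 2 fails because there is no compact intermediate factor.

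The missing idea is Furstenberg--Katznelson's \emph{primitive extensions}. Along a subring, the IP-systems $T^{mn_\alpha}$, $m\in\Z$, split into a subgroup (here $m\in 2\Z$) along which the extension is compact, and the remaining ones along which it is mixing. Every nontrivial extension contains a nontrivial primitive one. The central lemma then lifts recurrence through a primitive extension by running the mixing and the coloring arguments together. This mixed case is absent from your plan, and your separate steps 1 and 2 do not cover it.

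\emph{The statement you carry up the tower cannot be propagated.} The compact (or primitive) lifting applies the property of the base to far more IP-systems than $T^{n_\alpha},\dots,T^{kn_\alpha}$. The coloring step produces configurations such as $T^{mn_\alpha}$ for $m$ up to a van der Waerden or Hales--Jewett bound, or multiparameter configurations, and it repeatedly passes to subrings. So the inductive hypothesis must be the recurrence property for every finite family of IP-systems in the generated group, valid inside every IP-ring: for each $\mathcal F'$ there are $c>0$ and a subring $\mathcal F''\subseteq\mathcal F'$ on which the bound holds. Your ``some $c$ and some subring'' for the fixed $(k+1)$-term configuration is not enough.

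These two points, together with the compact step that you explicitly leave open, are exactly where the content of the Furstenberg--Katznelson proof lies.
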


The analogous result for sequences for rationally independent polynomials was obtained in \cite{polyipszemeredi} by Bergelson and McCutcheon:
\begin{thm}
Let $\mps$ be an ergodic system and let $A\in \mathcal{B}$ with $\mu(A) > 0$. For any $k\in \N$ and any family of rationally independent integer polynomials $p_1,\dots,p_k$ satisfying $p_i(0) = 0$ for all $i=1,\dots, k$, the set 
$$
\{n\in \N : \mu(A\cap T^{p_1(n)}A\cap \dots\cap T^{p_k(n)}A) > 0\}
$$
is $\IP^*$. 
\end{thm}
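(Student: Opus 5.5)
This is the Bergelson--McCutcheon polynomial $\IP$-Szemer\'edi theorem, and I would prove it with their IP-polynomial machinery. The plan is to pass to IP-limits along sub-IP-systems and to run a PET-type induction inside the Furstenberg--Katznelson structure theory, relativized to IP-limits.

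\textbf{Setup.} Fix an $\IP(\ni)$ and suppose, for contradiction, that $\mu(A\cap T^{p_1(n)}A\cap\dots\cap T^{p_k(n)}A)=0$ for every $n\in\IP(\ni)$. For a finite set $\alpha\subseteq\N$ write $n_\alpha=\sum_{i\in\alpha}n_i$. Then $\alpha\mapsto T^{p_j(n_\alpha)}$ is a polynomial expression in the commuting "IP-generators" $T^{n_i^{a}}$. By Hindman's theorem and Milliken--Taylor, after passing to a sub-IP-system we may assume that every relevant weak limit exists:
\[
\mathrm{IP}\text{-}\lim_\alpha T^{q(n_\alpha)}f \quad\text{in the weak topology.}
\]

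\textbf{Key steps.}
\begin{enumerate}[label=(\arabic*)]
\item \emph{Limit operators.} Establish an IP van der Corput lemma. Use it to show that for the polynomial family $\{p_j\}$ the IP-limit of $\int f_0\prod_j T^{p_j(n_\alpha)}f_j$ is controlled by a "polynomial IP-compact" factor $\mathcal{Y}$. More precisely, if $\mathbb{E}(f_j\mid\mathcal{Y})=0$ for some $j$, the limit vanishes. This goes by PET induction on the weight vector of the family: differencing $T^{p_j(n_\alpha)}$ against $T^{p_j(n_\beta)}$ for disjoint $\alpha,\beta$ produces families of lower complexity. Rational independence of the $p_j$, together with $p_j(0)=0$, ensures the differenced families remain non-degenerate.
\item \emph{Transfinite tower.} Build the Furstenberg--Katznelson tower of factors. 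Each extension is either primitive with respect to the IP-limits of the polynomial actions, meaning it splits into a relatively compact part and a relatively weak-mixing part, or the tower reaches $X$ after countably many steps. The maximal factor on which the IP-Szemer\'edi property holds is nontrivial; the trivial factor works.
\item \emph{Lifting through extensions.} Show the property lifts through the two types of extension.
 \begin{itemize}
 \item \emph{Relatively weak-mixing extensions.} Use step (1) relativized, which is the polynomial IP relative mixing of all orders.
 \item \emph{Relatively compact extensions.} Use a coloring argument: approximate fibers by finitely many functions, then apply the polynomial IP van der Waerden theorem (the Bergelson--Leibman/IP version, via Hales--Jewett) to find $\alpha$ with the configuration returning.
 \end{itemize}
 Closure under inverse limits is standard.
\end{enumerate}

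\textbf{Main obstacle.} Step (3) for compact extensions is hardest. One needs a uniform positive lower bound along a sub-IP-system, not merely at one $\alpha$, so that the property is stable for the next stage. I would strengthen the inductive statement to: for every $A$ of positive measure there is $c>0$ such that $\mu(\cdots)>c$ for all $\alpha$ in some sub-IP-system. Polynomial Hales--Jewett supplies the uniformity.

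Finally, ergodicity is not truly needed beyond ensuring the base factor works. The conclusion $\mu>0$ for some $n\in\IP(\ni)$ contradicts the assumption, so the return set is $\IP^*$.
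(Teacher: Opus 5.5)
The paper gives no proof of this statement. It is quoted as background from Bergelson--McCutcheon \cite{polyipszemeredi}, and the paper's own tools cannot reach it: averages over $\IP_{\Phi_N}$ are only known to converge for IPs with rational spectrum (Theorem~\ref{ipmeanthm}), while $\IP^*$ concerns every IP. Working with IP-limits along sub-IP-systems inside a Furstenberg--Katznelson tower, as you propose, is therefore the right framework, and your outline follows the architecture of \cite{polyipszemeredi}. As written, however, two load-bearing ingredients are missing.

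\textbf{The compact/weak-mixing dichotomy in your Step (2).} In the linear IP-Szemer\'edi theorem, $\alpha\mapsto T^{n_\alpha}$ is an IP-system: $T^{n_{\alpha\cup\beta}}=T^{n_\alpha}T^{n_\beta}$ for disjoint $\alpha,\beta$. This forces any weak IP-limit $P$ to satisfy $P^2=P$, so $P$ is an orthogonal projection. Its range, on $L^2(X)$ and on relative products $X\times_Y X$, then consists of IP-rigid vectors. That is what produces compact elements and guarantees that every proper factor has a nontrivial primitive extension inside $X$.

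For nonlinear $p$ we have $T^{p(n_{\alpha\cup\beta})}\neq T^{p(n_\alpha)}T^{p(n_\beta)}$, so the projection property is not automatic. It is a separate theorem: Bergelson--Furstenberg--McCutcheon \cite{BFM}, and \cite{polyipszemeredi} builds its VIP framework around the generalization. Your Step (1) does not supply it.

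Step (1) also claims more than PET delivers. IP van der Corput plus PET gives only the relative statement: in an extension that is relatively weakly mixing for all the relevant polynomial IP-limits, each $f_j$ may be replaced by its conditional expectation. That is all Step (3) needs. Control of the limit by a single absolute ``compact'' factor does not follow from PET alone. Even for averages it requires further structural analysis, as in the proof of Theorem~\ref{ratkronisfact}.

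\textbf{Closing the induction.} The induction does not close if the hypothesis concerns only the fixed family $p_1,\dots,p_k$. In the compact-extension step, after colouring fibre approximants and applying polynomial Hales--Jewett, you need positivity on the factor for the intersection of the base set over the whole monochromatic configuration. That configuration is a new, larger family of IP-polynomial expressions in several IP variables, for instance $p_j(n_\gamma+n_\alpha)$ with $\gamma$ ranging over a finite set. The property carried up the tower must therefore be quantified, uniformly, over a class of IP-polynomial configurations closed under these operations. This is exactly why \cite{polyipszemeredi} is formulated for VIP systems. Your strengthened hypothesis adds uniformity in $\alpha$ but keeps the family fixed, so it cannot be invoked on the factor where the compact-extension step needs it.

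Incidentally, neither ergodicity nor rational independence is needed: the Bergelson--McCutcheon theorem holds for arbitrary integer polynomials vanishing at $0$.
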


To refine these intersection properties, we restrict our attention to $\IP$ sets generated by sequences $\ni$ possessing a \textit{rational spectrum} (see Definition~\ref{ratspec}).

The second author and Kra \cite{krashalom2025} recently demonstrated that for linear configurations, one can simultaneously achieve the large intersection property and positive lower $\IPr$ density (see Definition \ref{ipratden}) along $\IP$s with rational spectrum:
\begin{thm}\label{KraShalom}
Let $\mps$ be a measure preserving system and $A\in \mathcal{B}$ with $\mu(A)>0$. Then for all coprime integers $l_1,l_2\in \Z$ and any $\varepsilon >0$, the sets
$$
\{n\in \N : \mu(A\cap T^{-l_1n}A\cap T^{-l_2n}A)\ge \mu(A)^3-\varepsilon\}
$$
and 
$$
\{n\in \N : \mu(A\cap T^{-l_1n}A\cap T^{-l_2n}A\cap T^{-(l_1+l_2)n}A)\ge \mu(A)^4-\varepsilon\}
$$
have positive lower $\IPr$-density.
\end{thm}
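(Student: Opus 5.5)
We follow the route of the original argument of Kra and the second author, reducing the statement to a limit formula for the relevant multiple averages along $\IP$s with rational spectrum. Fix $\ni$ with rational spectrum, write $\Phi_N$ for its associated increasing Følner sets, and let $\E_{n\in\IP_{\Phi_N}(\ni)}$ be the averages over these finite sums. Lower $\IPr$-density is defined through limits of such averages over all rational-spectrum generators. It therefore suffices to prove
\begin{equation*}
\liminf_{N\to\infty}\E_{n\in \IP_{\Phi_N}(\ni)} \mu(A\cap T^{-l_1n}A\cap T^{-l_2n}A) \ge \mu(A)^3,
\end{equation*}
together with the analogous bound $\mu(A)^4$ for the four-term configuration. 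Indeed, if this average is at least $\mu(A)^3$, then the set of $n$ where the integrand is at least $\mu(A)^3-\varepsilon$ has relative density bounded below along $\IP_{\Phi_N}(\ni)$. The lower bound comes from a Markov-type estimate, since the integrand is bounded by $1$.

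\textbf{Step 1: Characteristic factors.} We show that along $\IP$s with rational spectrum the averages are controlled by the Host--Kra factor $Z_1$ (three terms) or $Z_2$ (four terms). The approach is van der Corput along the $\IP$ structure, which bounds the $L^2$ limit by Gowers--Host--Kra seminorms. Rational spectrum enters here: it guarantees that for every eigenvalue $e(\alpha)$ the averages of $e(\alpha n)$ along $\IP_{\Phi_N}(\ni)$ converge. They converge to $1$ on rational phases in the appropriate sense, and the joint behaviour is equidistributed on the rational part. Ergodic decomposition reduces us to ergodic $T$.

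\textbf{Step 2: Limit formula on the factor.} In the three-term case we project $f=1_A$ to $Z_1$, a compact abelian group rotation, and compute the limit as an integral over the closure $H$ of the orbit of $(l_1n\alpha,l_2n\alpha)$ along the $\IP$. Using rational spectrum, this closure contains a subgroup of the form $\{(l_1u,l_2u)\}$ composed with the rational part. Coprimality of $l_1,l_2$ lets us split the integral into a product over the connected and the rational (finite cyclic) components.

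\textbf{Step 3: The inequality.} On the connected component we apply the Bergelson--Host--Kra argument: Cauchy--Schwarz or Hölder against $\int f\,d\mu$ yields $\ge(\int f)^3$. On the finite rational part we average over a finite group and use convexity (Jensen on the conditional expectation onto the rational Kronecker factor, $\E(f\mid\Krat)$). For the four-term configuration we use that the $Z_2$ nilsystem limit along $l_1n,l_2n,(l_1+l_2)n$ reduces to a parallelepiped-type integral $\int f\otimes f\otimes f\otimes f$ over a Host--Kra cube structure. That integral is a square of an $L^2$ norm, which is then bounded below by $\mu(A)^4$ via two applications of Cauchy--Schwarz, as in the original argument.

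\textbf{Main obstacle.} The hardest step is the four-term case. Showing that the $\IP$ limit on a $2$-step nilsystem still has the positivity structure requires understanding equidistribution of polynomial orbits along rational-spectrum $\IP$s, not along intervals. We would handle this by proving first that any Følner-type $\IP$ average of a nilsequence along such $\ni$ equals the ordinary Cesàro average restricted to a residue class. Rational spectrum ensures the $\IP$ equidistributes modulo every $q$ and is "generic" for irrational rotations.
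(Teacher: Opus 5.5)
The paper does not prove this statement; it is quoted from \cite{krashalom2025}. Judged on its own, your argument has a genuine gap at the very first reduction.

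\textbf{The unweighted average bound is false.} You propose to show that $\liminf_N\E_{n\in\IP_{\Phi_N}(\ni)}\mu(A\cap T^{-l_1n}A\cap T^{-l_2n}A)\ge\mu(A)^3$, and this already fails for an irrational rotation $Tx=x+\alpha$ on $\mathbb{T}$. Rational spectrum forces the $\IP$-limit distribution of $n\alpha$ to be Haar measure on the identity component of the Kronecker factor. For instance, with $n_i=2^i$ and $\Phi_N=[1,N]$ one has $\E_{n\in\IP_{\Phi_N}(\ni)}e^{2\pi ikn\alpha}=\prod_{i=1}^N\frac{1+e^{2\pi ik2^i\alpha}}{2}\to0$ for every $k\neq0$. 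Hence for $(l_1,l_2)=(1,2)$ the $\IP$ average converges to $\int\!\!\int 1_A(x)1_A(x+t)1_A(x+2t)\,dx\,dt$. For a Behrend-type finite union of intervals this is much smaller than $\mu(A)^3$. Consequently:
\begin{itemize}
\item The H\"older step ``on the connected component'' in your Step 3 is false.
\item On the finite part, averaging over $\Z/r\Z$ with the $\IP$ weights gives $\sum_j\omega_{r,j}\int f_r\,T^{l_1j}f_r\,T^{l_2j}f_r\,d\mu$. This is a weighted 3-term-progression count in a cyclic group, and Jensen does not bound it below.
\item The ``square of an $L^2$ norm'' you invoke for four terms arises only for two-parameter cube averages $\E_{n,m}$, not along a single $n$. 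For $(l_1,l_2)=(1,2)$ the pattern is a 4-term progression, which is again Behrend-small on average.
\end{itemize}

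\textbf{The missing idea is localization.} You must average only over those $n$ for which the rotation on the characteristic factor is nearly trivial:
\begin{itemize}
\item $n\equiv0\pmod r$ for the rational part. This set has positive $\IP$-density; it is the same device as $\psi_r^Q$ in the paper's proof of Theorem~\ref{main:thm}.
\item $n\alpha$ in a small neighbourhood of $0$ in the identity component. This set has positive relative $\IP$-density precisely because of the Haar distribution above.
\end{itemize}
For such $n$ the correlation is close to $\int\tilde f^{\,3}\,d\mu\ge\mu(A)^3$, where $\tilde f$ is the projection of $1_A$ to $Z_1$. Your Markov step applied to this localized average then gives positive lower $\IPr$-density. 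Making this rigorous requires two further ingredients. First, the characteristic-factor result for averages weighted by $w(n\alpha)$, obtained by absorbing eigenfunctions or passing to a product with the Kronecker system, as in Proposition~\ref{ratkroncharpsi}. Second, in the four-term case, a nil-Bohr localization on $Z_2$ as in Bergelson--Host--Kra; only after that localization can a cube-type positive expression legitimately appear.

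\textbf{Further corrections.}
\begin{itemize}
\item Rational spectrum gives neither equidistribution modulo $q$ (take all $n_i$ even) nor limit $1$ on rational phases (for $n_i=2^i$ the $\IP$ averages of $e^{2\pi in/4}$ vanish). The $\IP$ limit of a nilsequence is the $\omega_{r,j}$-weighted combination of Theorem~\ref{pointwise}, not a single residue-class Ces\`aro average.
\item The localizing set depends on the eigenvalues of each ergodic component, so ergodic decomposition does not reduce you to the ergodic case. In fact ergodicity is necessary. Take $T(x,y)=(x,y+x)$ on $\mathbb{T}^2$ and $A=\mathbb{T}\times B$. Then $\mu(A\cap T^{-n}A\cap T^{-2n}A)=\int\!\!\int 1_B(y)1_B(y+t)1_B(y+2t)\,dy\,dt$ for every $n\ge1$, and this is $<\mu(A)^3$ when $B$ is Behrend-type. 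So the statement as quoted must be read for ergodic $T$.
\end{itemize}
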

Crucially, the assumption of rational spectrum in Theorem~\ref{KraShalom} is a necessary constraint (see \cite[Appendix B]{krashalom2025}). It is natural to ask whether the results of Frantzikinakis and Kra on polynomial progressions (Theorem~\ref{largeFK}) can be extended to IPs. The case of a single polynomial $\mu(A\cap T^{p_1(n)}A)$ was established in \cite{bergelsonupdate}, see also \cite{BFM,bergelson2003minimal,BKM,bergelsonultra} for several related results. However, a recent paper of Bergelson and Zelada \cite{bergelson2026setslargevaluespolynomial} proved that the set \eqref{polylargeintset} is not $\IP^*$ in general, whenever at least two polynomials are involved. But they showed that it is  always $\text{A-}\IP^*$ (almost $\IP^*$), so that it differs from some $\IP^*$ set by a set of zero density.

\subsection{Main results} 

The principal objective of this paper is to establish the $\IPr$ large intersection property for families of rationally independent polynomials. In fact, we also show that the set of differences with this property has positive lower $\IPr$-density.

\begin{thm}[Large intersections for polynomials along rational IPs]\label{main:thm}
Let $\mps$ be an invertible measure preserving system, $k \ge 1$, and let $p_1,\dots,p_k : \Z \rightarrow \Z$ be a family of rationally independent polynomials. Assume furthermore that $p_i(0) = 0$ for all $i = 1,\dots,k$. Then for any $A\in \mathcal{B}$ with $\mu(A) >0$ and all $\varepsilon > 0$, the set 
\begin{equation}\label{maininmain}
\left\{n\in \N: \mu(A\cap T^{p_1(n)}A\cap T^{p_2(n)}A\cap\dots\cap T^{p_k(n)}A) > \mu(A)^{k+1} - \varepsilon\right\} 
\end{equation}
has positive lower $\IPr$-density.  
\end{thm}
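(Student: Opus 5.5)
Following Frantzikinakis--Kra, the plan is to identify the characteristic factor of the averages along the $\IP$ and show that it is essentially the rational Kronecker factor. Fix an $\IP$ generated by $\ni$ with rational spectrum and a F\o lner sequence $\fol$. Define the averages
\[
\avgip \int f_0\cdot T^{p_1(n)}f_1\cdots T^{p_k(n)}f_k\,d\mu .
\]
By Theorem~\ref{polychar} these averages exist. Moreover, each $f_i$ may be replaced by its conditional expectation onto a characteristic factor. The first step is to argue that, because $p_1,\dots,p_k$ are rationally independent, this factor can be taken to be $\Krat$, the factor spanned by eigenfunctions with rational eigenvalues. I would use PET induction and van der Corput along the $\IP$ to reduce to a nilfactor $Z_s$. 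Then, in the style of Leibman, I would analyze polynomial orbits on nilmanifolds: the orbit $(T^{p_1(n)},\dots,T^{p_k(n)})$ equidistributes, along $\IP$s with rational spectrum, in the product of the connected components. The rational spectrum hypothesis is exactly what ensures that the only obstruction comes from rational phases $e(p_i(n)a/q)$.

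Second, I reduce to a residue class. Since the generators have rational spectrum, for any $q$ the set of $n\in\IP(\ni)$ with $q\mid n$ has positive lower $\IPr$-density. Choose $q$ such that $\mathbb{E}(\ind{A}\mid\Krat)$ is within $\delta$ in $L^2$ of a function $g$ measurable with respect to the $q$-periodic part, meaning $T^q g=g$ (up to taking $q$ divisible enough). Since $p_i(0)=0$, for $n\equiv 0 \pmod q$ we have $p_i(n)\equiv 0 \pmod q$ after possibly replacing $q$ by a suitable multiple to handle the denominators of integer-valued polynomials. Hence $T^{p_i(n)}g=g$. Restricting to such $n$, i.e. to the sub-$\IP$ generated by multiples of $q$, the limit average becomes approximately
\[
\int \mathbb{E}(\ind A\mid \Krat)^{k+1}\,d\mu \;\ge\; \Big(\int \ind A\,d\mu\Big)^{k+1}=\mu(A)^{k+1}
\]
by Jensen/Hölder, with an error of $O(k\delta)$.

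Third, I pass from the limit of averages to positive lower density. If the set \eqref{maininmain} had zero lower $\IPr$-density along some $\IP$ with rational spectrum, then averaging over the sub-$\IP$ of multiples of $q$ would give a limit of at most $\mu(A)^{k+1}-\varepsilon$ plus $o(1)$, contradicting the previous step once $\delta$ is small. Here I would need to check that the sub-$\IP$ of multiples of $q$ is itself $\IPr$-admissible and carries positive proportion, which should follow from Definition~\ref{ipratden}.

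The main obstacle is the first step: proving that $\Krat$ is characteristic along $\IP$ averages. Van der Corput along $\IP$ F\o lner averages works, but the nilsystem equidistribution of polynomial sequences along $\IP$s requires a Leibman-type theorem (Theorem~\ref{bfromleibman}). I would try to derive it by showing that the rational-spectrum condition forces Weyl sums $\E_{n\in\IP_{\Phi_N}}e(p(n)\alpha)\to 0$ for irrational $\alpha$. I would attempt this by induction on degree, using the product structure of $\IP$ F\o lner sets.
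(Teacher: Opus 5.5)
Your overall architecture matches the paper's: IP van der Corput/PET down to a Host--Kra factor, then the IP limit formula on nilmanifolds with Frantzikinakis--Kra density to reach $K_{\rat}$, then approximating $E(\ind{A}\mid K_{\rat}(X))$ by a $K_r$-measurable function, restricting to $n$ at which all polynomial values vanish mod $r$, and finishing with Jensen. However, two steps fail as written.

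\textbf{Linear members.} Theorem~\ref{polychar}, and hence Theorem~\ref{ratkronisfact}, only covers systems of \emph{nonlinear} polynomials. The statement allows one $p_i$ to be linear, e.g.\ $(p_1,p_2)=(n,n^2)$, so your invocation of Theorem~\ref{polychar} is not justified there. Your PET plan breaks at exactly this point. Along an IP the van der Corput parameters $m,m'$ are frozen and only $n$ varies. For $p(n)=an$ the difference $p(n+m)-p(n+m')=a(m-m')$ is therefore constant for \emph{every} pair $(m,m')$, and Lemma~\ref{ipsmalllemma} is unavailable. The paper leaves this case open as Conjecture~\ref{IPratchar:conj}. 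The missing idea is to shift the whole configuration using invariance of $\mu$: $\mu(A\cap T^{p_1(n)}A\cap\cdots\cap T^{p_k(n)}A)=\mu(T^{q(n)}A\cap T^{p_1(n)+q(n)}A\cap\cdots\cap T^{p_k(n)+q(n)}A)$ with $q(n)=n^b$, $b>\deg P$. The system $(q,p_1+q,\dots,p_k+q)$ is rationally independent and consists of nonlinear polynomials, so the characteristic-factor results apply to it.

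\textbf{Restricting to multiples of $Q$.} The set $\{n\in\IP_{\Phi_N}(\ni): Q\mid n\}$ does have relative density bounded below. But it is not an IP, so Theorems~\ref{polychar}, \ref{ratkronisfact} and \ref{bfromleibman} say nothing about averages over it.

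Conversely, consider a genuine sub-IP of multiples of $Q$, generated by sums over disjoint blocks of generators. It has relative size $2^{b_N-|\Phi_N|}$ in $\IP_{\Phi_N}(\ni)$, where $b_N$ is the number of blocks inside $\Phi_N$. This tends to $0$ unless all but finitely many $n_i$ are already divisible by $Q$; for instance, with $n_i=10^{i-1}$ and $Q=3$ every block needs at least three generators. So zero relative density of the set \eqref{maininmain} in $\IP_{\Phi_N}(\ni)$ gives no information on such a sub-IP, and Definition~\ref{ipratden} does not help.

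What you need instead is a \emph{weighted} characteristic-factor statement: $K_{\rat}$ remains characteristic for $\E_{n\in F_N}\psi(p_1(n),\dots,p_k(n))\prod_i T^{p_i(n)}f_i$ with $\psi$ a function on $(\Z/r\Z)^k$. The paper proves this in Proposition~\ref{ratkroncharpsi}. It expands $\psi$ into characters $e^{2\pi i a_jp_j(n)/r}$ and writes each as $\overline{T}^{p_j(n)}$ applied to the rational eigenfunction $(x,m)\mapsto e^{2\pi i a_jm/r}$ of $\overline{T}(x,m)=(Tx,m+1)$ on $X\times\Z/r\Z$. Theorem~\ref{ratkronisfact} is then applied on that extension.

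With this in hand, one averages over all of $\IP_{\Phi_N}(\ni)$ with weight $\psi^Q_r(n)/c$, where $c=\lim_{N\to\infty}\E_{n\in F_N}\psi^Q_r(n)>0$ by a completion argument. The zero-density hypothesis now transfers, because elements of the set \eqref{maininmain} contribute at most their relative density in $F_N$ divided by $c$.
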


In order to translate this dynamical result into a purely combinatorial statement, we rely on the notion of \textit{upper density} with respect to a F\o lner sequence $\Phi=(\Phi_N)_{N\in \mathbb{N}}$, denoted $d_{\Phi}(\Lambda)\colon= \limsup_{N\rightarrow\infty} \frac{|\Lambda\cap \Phi_N|}{|\Phi_N|}$, which measures the maximum asymptotic density of a set $\Lambda \subseteq \N$ over the F\o lner sequence. Since Theorem \ref{main:thm} does not require the system to be ergodic, applying the Furstenberg correspondence principle with Theorem~\ref{main:thm} yields the following combinatorial application.

\begin{cor}\label{main:cor}
Let $\Phi=(\Phi_N)_{N\in \N}$ be a F\o lner sequence, and let  $\Lambda \subset \N$ with $d_\Phi(\Lambda)>0$. Let
$p_1,\ldots,p_k:\Z \to \Z$ be a family of rationally independent polynomials such that $p_i(0)=0$ for all $i=1,\ldots,k$. Then for every
$\varepsilon>0$, the set
$$
\left\{
n \in \N :
d_{\Phi}\bigl(
\Lambda \cap (\Lambda+p_1(n)) \cap (\Lambda+p_2(n)) \cap \cdots
\cap (\Lambda+p_k(n))
\bigr)
>
d_{\Phi}(\Lambda)^{k+1}-\varepsilon
\right\}
$$
has positive lower $\IPr$-density.
\end{cor}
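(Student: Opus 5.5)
The plan is to derive Corollary~\ref{main:cor} from Theorem~\ref{main:thm} through the Furstenberg correspondence principle, using the version that respects a given F\o lner sequence.

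\emph{Step 1: the correspondence.} Given $\Lambda$ with $d_\Phi(\Lambda)>0$, first pass to a subsequence $(\Phi_{N_j})$ along which $|\Lambda\cap\Phi_{N_j}|/|\Phi_{N_j}|\to d_\Phi(\Lambda)$. Let $X=\{0,1\}^{\Z}$ with the shift $T$; this is invertible, so Theorem~\ref{main:thm} applies. Take $x=\ind{\Lambda}$ and $A=\{\omega:\omega(0)=1\}$. Using a diagonal argument, refine the subsequence further so that the empirical measures $\mu_j=\frac{1}{|\Phi_{N_j}|}\sum_{m\in\Phi_{N_j}}\delta_{T^m x}$ converge weak-$*$ to some $\mu$. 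The F\o lner property makes $\mu$ $T$-invariant. Since $A$ is clopen, $\mu(A)=\lim_j \mu_j(A)=d_\Phi(\Lambda)$.

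\emph{Step 2: comparing intersections.} Fix integers $a_1,\dots,a_k$. The set $A\cap T^{a_1}A\cap\dots\cap T^{a_k}A$ is a cylinder set, hence clopen. Its $\mu_j$-measure is exactly the proportion of $m\in\Phi_{N_j}$ with $m\in\Lambda$ and $m-a_i\in\Lambda$ for every $i$, up to the sign convention for $T$. That is the relative density of $\Lambda\cap(\Lambda+a_1)\cap\dots\cap(\Lambda+a_k)$ in $\Phi_{N_j}$. If the sign convention produces $\Lambda-a_i$ instead, we either replace $T$ by $T^{-1}$ or observe that $\mu(A\cap T^{a_1}A\cap\cdots)$ is invariant under applying a common shift, and pick the convention that matches. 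Passing to the limit gives
$$\mu\bigl(A\cap T^{a_1}A\cap\dots\cap T^{a_k}A\bigr)=\lim_j\frac{|\Lambda\cap(\Lambda+a_1)\cap\dots\cap(\Lambda+a_k)\cap\Phi_{N_j}|}{|\Phi_{N_j}|}\le d_\Phi\bigl(\Lambda\cap(\Lambda+a_1)\cap\dots\cap(\Lambda+a_k)\bigr).$$
The last inequality holds because $d_\Phi$ is a $\limsup$ over the full sequence, which dominates any subsequential limit.

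\emph{Step 3: conclusion.} Set $a_i=p_i(n)$. By Step 2, every $n$ in the set \eqref{maininmain} for the system $(X,\mathcal{B},\mu,T)$ also lies in the combinatorial set of the corollary, because $\mu(A)^{k+1}=d_\Phi(\Lambda)^{k+1}$. Lower $\IPr$-density is monotone under inclusion, so the corollary follows from Theorem~\ref{main:thm}. No ergodicity is needed, and none is available from the construction; this is exactly why the non-ergodic form of the theorem is essential.

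The only delicate point is Step 2: the same measure $\mu$ must work for all $n$ simultaneously. It does, because $\mu$ is built once, and every cylinder identity then holds for all shifts at once.
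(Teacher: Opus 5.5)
Your proof is correct and follows exactly the route the paper indicates: Furstenberg correspondence along a subsequence of $\Phi$ that realizes $d_\Phi(\Lambda)$, then Theorem~\ref{main:thm} applied to the resulting possibly non-ergodic shift system, then monotonicity of lower $\IPr$-density. One small remark on your sign discussion: with the left shift $(T\omega)(m)=\omega(m+1)$ the cylinder identity already holds exactly with $\Lambda+a_i$, and if a flip were needed you should pass to $T^{-1}$ (or to $-p_1,\dots,-p_k$), since the ``common shift'' alternative does not turn $a_i$ into $-a_i$ when $k\ge 2$.
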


\subsection{Open questions}

A consequence of a recent preprint by Bergelson and Zelada \cite{bergelson2026setslargevaluespolynomial} is that the large intersection set defined in \eqref{maininmain} is not an $\IP^*$ set, meaning it is not guaranteed to intersect every possible $\IP$. This observation motivates the following structural question:

\begin{question}
For which sequences $\ni$ with an irrational spectrum the set in \eqref{maininmain} is always guaranteed to intersect $\IP(\ni)$?
\end{question}

A further inquiry is whether the requirement that all polynomials be strictly nonlinear can be relaxed in Theorem~\ref{polychar}. Specifically, one might ask if the theorem holds for an exceptional polynomial system where exactly one polynomial is linear. We conjecture that the answer is affirmative. As preliminary evidence supporting a positive resolution, we have manually verified that the statement holds for the specific polynomial system $\{n, n^2\}$ by adapting the proof of Furstenberg and Weiss from \cite{MR1412607} to $\IPr$ averages. We thus conjecture
\begin{conj}\label{IPratchar:conj}
    Let $P=(p_1,\dots,p_r)$ be a system of rationally independent polynomials. Then there exists some $k=k(P)$ such that $Z_k(X)$ is an $\IPr$-characteristic factor for $P$ (see Definition~\ref{ipratchar}).
\end{conj}

Other promising directions for future research involve extending these results to encompass more general abelian or nilpotent group actions. Even within the context of $\Z^d$-actions, the analysis of characteristic factors is significantly more intricate. This complexity stems from the fact that unlike standard F\o lner sequences, the product of $d$ distinct $\IP$-F\o lner sequences in $\Z$ does not constitute an $\IP$-F\o lner sequence in $\Z^d$, the former is utilized by Leibman \cite{polycharfactor}. In a broader sense, one might also investigate systems of multiple commuting transformations, analogous to the framework presented in \cite{bergelson2026setslargevaluespolynomial}.
\section*{Acknowledgements}
The second author is supported by Alon Fellowship. This paper is a part of the first author's Master's thesis.

\section{Preliminaries}

\subsection{Measure preserving systems and factors}
An \textit{invertible measure preserving system} (or just a \textit{system} for short) is a quadruple $\mps$, where $X$ is a compact metric space, $\mathcal{B}$ is its Borel $\sigma$-algebra, $\mu$ is a regular probability measure, and $T:X\rightarrow X$ is a $\mu$-preserving invertible transformation (i.e., $\mu(T^{-1}A)=\mu(A)$ for all $A\in \mathcal{B}$). 

We write $L^2(X)$, or $L^2(\mu)$, for the corresponding Hilbert space of all square $\mu$-integrable functions on $X$ modulo equivalence $\mu$-almost everywhere. We abuse the notation and write $T$ both for the measure preserving transformation on $X$ and for an associated unitary operator $T: L^2(X)\rightarrow L^2(X).$

Given two systems $\mps$ and $(Y,\mathcal{C},\nu,S)$, we say that $Y$ is a \textit{factor} of $X$ if there is a measure preserving map $\pi:X\rightarrow Y$, such that $\pi \circ T = S\circ \pi$. If $\pi$ is invertible the systems are called \textit{isomorphic}. Each factor $Y$ defines a unique $T$-invariant sub $\sigma$-algebra $\pi^{-1}(\mathcal{C})\subseteq \mathcal{B}$, and an embedding $L^2(Y)\hookrightarrow L^2(X)$ (defined by $g \mapsto g\circ \pi$). In fact each $T$-invariant sub $\sigma$-algebra of $X$ give raise to a factor. So we will interchangeably think of factors of a system in these two equivalent ways, and sometimes may call a factor the subspace $L^2(Y) \subseteq L^2(X)$, since $Y$ can be reconstructed from $L^2(Y)$. If $Y$ is a factor of $X$, we denote by $E(\cdot \space | Y)$ the conditional expectation with respect to corresponding sub $\sigma$-algebra.

\subsection{Characteristic factors and Nilmanifolds} In this paper we focus on some special families of factors. The first one is the rational Kronecker factor. A function $f:X\rightarrow \C$ is called an \textit{eigenfunction} for $X$ or $T$, if there exists some \textit{eigenvalue} $\lambda\in S^1$ so that $Tf=\lambda f$.
\begin{dfn}
Let $\mps$ be a measure preserving system.  \textit{The  rational Kronecker factor} of $X$ is the minimal sub $\sigma$-algebra of $\mathcal{B}$ with respect to which all eigenfunctions with rational eigenvalues are measurable. We denote it by $K_{\rat}(X)$.
\end{dfn}
 For each $r\in \N$, let $K_r(X)$ denote the factor of $X$ associated with the minimal sub $\sigma$-algebra with respect to which all eigenfunctions with eigenvalues of order $r$ are measurable. It is classical that $K_r(X)$ is isomorphic to finite rotation on a subgroup of $\Z/r\Z$. It is also classical that the rational Kronecker factor is the joining of these $K_r(X)$, namely $K_{\rat}(X) = \bigvee_{r\in \N} K_r(X)$ (Equivalently, $\Krat(X) = \varprojlim K_r(X)$ if we think of factors as a systems and not as sub $\sigma$-algebras).

In \cite{hostkranil} Host and Kra studied the convergence of multiple ergodic averages $\frac{1}{N}\sum_{n=1}^N T^{n}fT^{2n}f\cdots T^{kn}f$. They showed that there exists a sequence of factors $Z_k(X)$ which are \emph{characteristic factors} for multiple ergodic average, and are also inverse limits of nilsystems. Nowadays, we know that these factors are also characteristic for polynomial ergodic averages. Our first goal in this paper is to prove that the Host--Kra factors are $\IPr$-characteristic (see Definition \ref{ipratchar}) for our polynomial averages.

%Nilsystems, namely translations on nilmanifolds, are particularly important for the study of multiple ergodic averages. Let us define precisely what is a nilmanifold and nilsystem.
\begin{dfn}[Nilmanifolds and nilsystems]
Let $G$ be a $k$-step nilpotent Lie group and $\Gamma\le G$ be its discrete co-compact subgroup. Then the homogeneous space $X= G/\Gamma$ is called a $k$-step \textit{nilmanifold}. Together with its Haar measure, $X$ can be seen as a measure preserving system with respect to an action by translation by some $a\in G$. Such a system is called a \textit{nilsystem}. 
\end{dfn}

We let $G_k\le G_{k-1}\le...\le G_2 = [G,G] \le G_1 = G$ denote the lower central series of $G$. We can  immediately see that $G/G_k\Gamma$ is a $(k-1)$-step nilmanifold and a factor of $G/\Gamma$ via $g\Gamma \mapsto gG_k\Gamma$ (we will need this for a proof of a Theorem \ref{pointwise}).

We are now ready to define precisely the Host--Kra structure theorem for $Z_k(X)$. 
\begin{thm}[\cite{hostkranil}]
Let $\mps$ be an ergodic measure preserving system. For each $k\in \N$, the factor $Z_k(X)$ is  an inverse limit of $k$-step ergodic nilsystems.
\end{thm}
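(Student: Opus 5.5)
Since this is the Host--Kra structure theorem, which the paper cites and does not prove, I would follow the strategy of \cite{hostkranil}: induct on $k$, realize $Z_k(X)$ as a compact abelian group extension of $Z_{k-1}(X)$, and then recover a nilpotent Lie structure from a group of ``cube-preserving'' transformations.

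\emph{Step 1: the cubic structure.} Define the cube measures $\mu^{[k]}$ on $X^{2^k}$ inductively by relative independent products,
\[
\mu^{[k+1]}=\mu^{[k]}\times_{\mathcal I^{[k]}}\mu^{[k]},
\]
where $\mathcal I^{[k]}$ is the $\sigma$-algebra of $T^{[k]}$-invariant sets. From these obtain the seminorms $\|f\|_{U^{k}}^{2^{k}}=\int \prod_{\omega}\mathcal C^{|\omega|}f(x_\omega)\,d\mu^{[k]}$, and define $Z_{k-1}(X)$ as the factor characterized by
\[
E(f\mid Z_{k-1})=0 \iff \|f\|_{U^{k}}=0.
\]
A Cauchy--Schwarz--Gowers inequality and van der Corput show that this is well defined and that it is a $T$-invariant factor. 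Under this indexing $Z_0$ is trivial, $Z_1$ is the Kronecker factor, and the base case is the Halmos--von Neumann theorem: $Z_1$ is an ergodic rotation on a compact abelian group, i.e.\ an inverse limit of rotations on tori times finite groups, which are $1$-step nilsystems.

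\emph{Step 2: the extension has a cocycle of type $k$.} I would show that $Z_k$ is an isometric extension of $Z_{k-1}$, and in fact an extension by a compact abelian group $U$ with cocycle $\rho\colon Z_{k-1}\to U$. The key point is that the cocycle is \emph{of type $k$}: the map $\Delta^{[k]}\rho(\mathbf x)=\prod_\omega \rho(x_\omega)^{\pm1}$ is a coboundary on $(Z_{k-1}^{[k]},\mu^{[k]})$. This comes from the fact that $\mu^{[k]}$ on $Z_k^{[k]}$ is a relatively independent joining, together with the ergodic decomposition of $\mu^{[k]}$. Inverse-limit arguments let me reduce to the case where $U$ is a Lie group, and then further to $U$ a torus or a finite group.

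\emph{Step 3: from type-$k$ cocycles to nilsystems, which is the main obstacle.} Let $\mathcal G(Z_k)$ be the group of measure-preserving transformations $t$ of $Z_k$ such that $t^{[k]}$ (the transformation acting on every coordinate of the cube, or on a face) preserves $\mu^{[k]}$. Using the type-$k$ property, I would show inductively that every element of the structure group of $Z_{k-1}$ lifts to $\mathcal G(Z_k)$, with a correction $U$-valued function whose ambiguity is controlled by ``polynomial'' functions of bounded degree. The delicate part is the following chain:
\begin{itemize}
\item proving that $\mathcal G$ is a locally compact group acting transitively on $Z_k$;
\item proving that it is $k$-step nilpotent, by computing commutators of lifts and using that lifts of $[\mathcal G,\mathcal G]$ land one level lower;
\item proving that, after passing to a suitable factor, it is a Lie group. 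Here I would use the Gleason--Yamabe theorem on approximating locally compact groups by Lie quotients, combined with the reduction to Lie $U$.
\end{itemize}
Once this is done, $Z_k\cong G/\Gamma$ with $T$ acting by translation by an element of $G$. Ergodicity of the nilsystem follows from ergodicity of $X$, and taking inverse limits over the Lie reductions gives the theorem.
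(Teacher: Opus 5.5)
The paper does not prove this statement; it is quoted from \cite{hostkranil}. Your outline follows the Host--Kra strategy, but Step~3 has a genuine gap. The reduction to Lie structure has to happen at \emph{every} level, not only for the top group $U$.

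By induction you only know that $Z_{k-1}=\varprojlim Y_j$ with each $Y_j$ a nilsystem. Making $U$ Lie leaves the possibly non-Lie structure groups of $Z_{k-1}$ untouched. These propagate into $\mathcal{G}(Z_k)$, which therefore need not be locally compact, so Gleason--Yamabe cannot even be invoked.

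Concretely, let $A=\prod_{i\in\N}\bigl(\mathbb{T}^2,(x,y)\mapsto(x+\alpha_i,y+x)\bigr)$ with $1,\alpha_1,\alpha_2,\dots$ rationally independent. This is an ergodic system of order $2$. Consider the maps $(x_i,y_i)_i\mapsto(x_i,y_i+n_ix_i)_i$ with $(n_i)\in\Z^{\N}$. In each coordinate they act by an element of the nilpotent group of that factor, so they preserve cube measures of every dimension facewise. They form a closed subgroup homeomorphic to $\Z^{\N}$. Now regard $A$ (or $A$ times a $3$-step nilsystem with independent frequencies) as a system of order $3$. The top structure group is Lie, yet $\mathcal{G}$ contains a closed copy of $\Z^{\N}$ and so is not locally compact.

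What is missing is a descent step, which is a separate and nontrivial part of \cite{hostkranil}: a type-$k$ cocycle $\rho\colon Z_{k-1}\to U$, with $U$ a compact abelian Lie group, is cohomologous to a cocycle measurable with respect to some $Y_j$. Only then is $Z_k$ an inverse limit of extensions $Y_{j'}\times_{\rho'}U$ whose structure groups are Lie at all levels. For such systems $\mathcal{G}$ is Lie: it is an extension of an open subgroup of the Lie group $\mathcal{G}(Y_{j'})$ by vertical rotations among which the constants $U$ form an open subgroup of countable index. The nilmanifold structure can then be extracted.

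Two further points need fixing.

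First, your $\mathcal{G}(Z_k)$ is defined one cube dimension too low. With your indexing ($Z_k$ is governed by the $U^{k+1}$ seminorm), the face transformations must preserve $\mu^{[k+1]}$, not $\mu^{[k]}$. Already for $k=1$, the measure $\mu^{[1]}=\mu\times\mu$ is preserved by $t\times\mathrm{id}$ for every automorphism $t$. Your $\mathcal{G}(Z_1)$ would then be the whole automorphism group, which is neither nilpotent nor locally compact, instead of the rotation group of $Z_1$.

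Second, you assert that every element of $\mathcal{G}(Z_{k-1})$ lifts. The standard argument only shows that the liftable elements form an open subgroup. This is why one works with the group generated by the identity component of $\mathcal{G}(Z_k)$ and $T$, and proves transitivity for that group. Either supply an argument for full lifting or restructure Step~3 along those lines.
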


We will extensively use the fact that given any $\varepsilon>0$ and $f\in L^2(\XX)$, we can find a $k$-step nilmanifold $G/\Gamma$, such that $f$ is $\varepsilon$-close to some function in $L^2(G/\Gamma)\subseteq L^2(\XX)$ in the $L^2$-norm.

\subsection{Infinite dimensional parallelepipeds}\label{introtoip}
$\IP$ is an abbreviation for infinite-dimensional parallelepipeds. In this section, we define this notion and isolate a special type of $\IP$ possessing a \textit{rational spectrum}, for which we state a mean ergodic theorem. Most of the definitions and results in this section were originally introduced in \cite{krashalom2025}.

\begin{dfn}\label{incfol}
Let $\ni$ be a sequence of natural numbers. The $\IP$ generated by $\ni$ is the multiset $\IP(\ni)$ of all finite sums of elements of $\ni$. Namely,
$$\IP(\ni) = \left\{\sum_{j=1}^{\ell} n_{i_j} : \ell\ge 0, i_1,...,i_{\ell} \text{ are distinct.}\right\},$$ with the convention that the empty sum is zero.
\end{dfn}
A key example is the set of all integers that can be written using only the digits $0,1$ in the decimal basis. 
$$
\IP((10^{n-1})_{n\in\N}) = \{0,1,10,11,100,...\}.
$$

Our next goal is to define ergodic averages along a given $\IP$.
\begin{dfn} 
A F\o lner sequence $\Phi = \fol$ is a sequence of finite subsets of $\N$ satisfying 
$$
\lim_{N\rightarrow\infty}\frac{|(a+ \Phi_N)\Delta \Phi_N|}{|\Phi_N|} = 0
$$
for all $a\in \N$. We say that the F\o lner sequence is \textit{increasing} if $\Phi_N = [M,a_N]$ for some $M\in \N$ and some increasing sequence of natural numbers $(a_n)_{n\in\N}$. Moreover, for all $M\in \mathbb{N}$, we denote by $\Phi^M=(\Phi_N^M)_{N\in\mathbb{N}}$ where $\Phi_{N}^M = \Phi_N \backslash \Phi_M$, the F\o lner sequence obtained by omitting the elements in $\Phi_M$ from every set in $\Phi$.
\end{dfn}
The definition of $\Phi_{N}^M$ is motivated by the $\IP$ van der Corput lemma (see Lemma \ref{vdc}). Moreover, we need to restrict our attention to  increasing F\o lner sequences, because for those the $\IP$-ergodic averages we are about to introduce converge. 

\begin{ex}
 The simplest example of an increasing $\Phi$ is $\Phi_N = [1,N]$. Then $\Phi_N^M = [M+1,N]$.
\end{ex}

Next, we \textit{project} the F\o lner sequence into the $\IP$ in the following manner:
\begin{dfn}
Let $\ni$ be a sequence of natural numbers. For any F\o lner sequence $\Phi$ and $N\in \N$ define 
$$
\IP_{\Phi_N}(\ni) = \{n_{i_1}+n_{i_2}+\dots +n_{i_\ell} : \ell\ge 0, \space i_1,\dots,i_\ell\in \Phi_N \text{ are distinct} \}.
$$

We sometimes write $F_N$ for $\IP_{\Phi_N}(\ni)$ as well as $F_N^M$ for $\IP_{\Phi_N^M}(\ni)$ when F\o lner sequence and $\ni$ are clear from the context.
\end{dfn}
We cautiously note that an $\IP$-F\o lner sequence is in general not a F\o lner sequence, but it can still be used to define ergodic averages along the $\IP$.
\begin{dfn}
Given a sequence of integers $\ni$, a F\o lner sequence $\Phi = \fol$, an integer $k\in \N$ and a sequence $(a_\mathbf{n})_{\mathbf{n} \in \N^k}$ we use the notation 
$$
\E_{n^1,n^2,\dots,n^k\in \IP_{\Phi_N}(\ni)}a_{\mathbf n} :=  \frac{1}{|\IP_{\Phi_N}(\ni)|^k}\sum_{n^1,\dots,n^k\in \IP_{\Phi_N}(\ni)}a_{n^1,\dots,n^k}.
$$
\end{dfn}
In general, ergodic averages along IPs do not need to converge (see \cite{krashalom2025}). However, when we restrict our attention to IPs of \textit{rational spectrum}, they always do.
\begin{dfn}[Spectrum of a sequence]\label{ratspec}
The \textit{spectrum} $\sigma((n_j)_{j\in\N})$ of a sequence of natural numbers $(n_j)_{j\in\N}$ is defined to be the group generated by the complement of the set
$$
\bigcap_{m\in\N}
\left\{
\alpha \in \mathbb{T} :
\lim_{d\to\infty}
\prod_{j=m}^{d}
\left(\frac{1+\alpha^{n_j}}{2}\right)
=0
\right\}.
$$
We say that a sequence has \textit{rational spectrum} if
$\sigma((n_j)_{j\in\N}) \subset e^{2\pi i\Q}$.
\end{dfn}

\begin{ex}
The sequence $n_i = 10^{i-1}$ has a rational spectrum, as well as any sequence $n_i = a^{i-1}$ and most of the naturally constructed sub-exponentially growing sequences. But, for example, any sequence that grows super-exponentially has an irrational spectrum (see., \cite{bergelson2014rigidity}), however it is possible to construct a sequence that has polynomial growth and an irrational spectrum.
\end{ex}

The following mean ergodic theorem along IP-iterates was established in \cite{krashalom2025}.
\begin{thm}[$\IP$ mean ergodic theorem]\label{ipmeanthm}
Let $\mps$ be a measure preserving system, let $\ni$ be a sequence with rational spectrum, and let $\Phi=(\Phi_N)_{N\in\mathbb{N}}$ be an increasing F\o lner sequence. Then 
$$
\avgip T^nf = \sum_{t\in \sigma(\ni)}\omega_{\Phi}(t)P_t(f)
$$
in $L^2(X)$, where $\omega_{\Phi}(t)$ are some numbers and $P_t$ are orthogonal projections onto a corresponding eigenspace $V_t$ of $L^2(X)$.
\end{thm}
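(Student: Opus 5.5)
The plan is to reduce the statement to the spectral theorem for the unitary operator $T$ and then analyse a scalar infinite product.

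\textbf{Step 1: the average is a product.} Because $\IP_{\Phi_N}(\ni)$ is a multiset indexed by the subsets of $\Phi_N$, it has $2^{|\Phi_N|}$ elements and the generating identity gives
$$
\E_{n\in \IP_{\Phi_N}(\ni)} T^n f=\prod_{i\in\Phi_N}\frac{I+T^{n_i}}{2}\, f .
$$
Write $\Phi_N=[M,a_N]$. Let $\nu_f$ be the spectral measure of $f$ on $\mathbb{T}$, and set
$$
g_N(\alpha)=\prod_{j=M}^{a_N}\frac{1+\alpha^{n_j}}{2}.
$$
Then the average above equals $g_N(T)f$ in the functional calculus, and $|g_N|\le 1$.

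\textbf{Step 2: pointwise convergence of $g_N$.} I will show that $g_N(\alpha)$ converges for every $\alpha\in\mathbb{T}$ to a limit $\omega_\Phi(\alpha)$ that vanishes off a countable subset of $\sigma(\ni)$. Since $a_N$ is increasing, the $g_N(\alpha)$ are partial products of one infinite product, so there are two cases.

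If $\alpha$ lies in the set $\bigcap_m\{\cdots\}$ of Definition~\ref{ratspec}, then taking $m=M$ gives $g_N(\alpha)\to 0$.

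Otherwise the tail product from some index $m$ does not tend to $0$, and $\alpha\in\sigma(\ni)\subset e^{2\pi i\Q}$. So $\alpha$ is a root of unity of some order $q$. Each factor $(1+\alpha^{n_j})/2$ is then either $1$ or has modulus at most $c_q<1$. The factors of modulus at most $c_q$ must occur only finitely often, since otherwise the tail moduli would tend to $0$. Hence $\alpha^{n_j}=1$ for all large $j$, the factors are eventually identically $1$, and the partial products stabilize.

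In both cases $g_N(\alpha)\to\omega_\Phi(\alpha)$, and $\omega_\Phi$ is supported on the countable set $S=\sigma(\ni)\cap e^{2\pi i\Q}$.

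\textbf{Step 3: passing to the limit.} By dominated convergence,
$$
\|g_N(T)f-\omega_\Phi(T)f\|^2=\int |g_N-\omega_\Phi|^2\,d\nu_f\to 0 .
$$
Since $\omega_\Phi=\sum_{t\in S}\omega_\Phi(t)\mathbf 1_{\{t\}}$ and the spectral projection onto the atom $\{t\}$ is the orthogonal projection $P_t$ onto the eigenspace $V_t$, we get
$$
\omega_\Phi(T)f=\sum_{t\in\sigma(\ni)}\omega_\Phi(t)P_tf .
$$
This sum converges in $L^2$ because the $P_tf$ are mutually orthogonal and $|\omega_\Phi|\le1$.

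The main point needing care is Step 2, in particular the case where the tail products do not vanish. There the rational spectrum hypothesis is exactly what forces the phases $\alpha^{n_j}$ to become eventually trivial, rather than merely having convergent moduli with rotating arguments. For irrational $\alpha$ this could fail, which is consistent with the necessity of the hypothesis.
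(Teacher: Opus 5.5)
Your proof is correct and complete. The paper itself does not prove this theorem; it quotes it from \cite{krashalom2025}, so there is no in-paper argument to compare against.

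Your route is the one Definition~\ref{ratspec} is built for:
\begin{itemize}
\item you write the $\IP$ average as $\prod_{i\in\Phi_N}\frac{I+T^{n_i}}{2}f$;
\item you pass to the spectral measure $\nu_f$;
\item you then study the scalar partial products $\prod_{j=M}^{d}\frac{1+\alpha^{n_j}}{2}$ pointwise in $\alpha$.
\end{itemize}

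The delicate case is handled correctly. If $\alpha$ lies in the vanishing set, you take $m=M$ and the products tend to $0$. Otherwise rational spectrum makes $\alpha$ a $q$-th root of unity. Then every factor other than $1$ has modulus at most $\cos(\pi/q)<1$; this includes the factor $0$ that occurs when $\alpha^{n_j}=-1$. Since the tail product does not tend to $0$, $\alpha^{n_j}=1$ for all large $j$. Dominated convergence in $L^2(\nu_f)$ then finishes the argument.

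A useful by-product is an explicit formula for the weights:
\[
\omega_\Phi(t)=\lim_{d\to\infty}\prod_{j=M}^{d}\frac{1+t^{n_j}}{2}.
\]
For $t$ outside the vanishing set this is effectively a finite product. This is exactly what justifies the paper's later uses of the theorem. For example, in the base case of Theorem~\ref{pointwise}, the average of an eigenfunction with irrational eigenvalue tends to $0$ because irrational $\alpha$ lies in the vanishing set.

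One point you should state explicitly is that $T$ is unitary. This holds by the paper's standing convention that systems are invertible. It is what lets you use the Borel functional calculus and identify the spectral projection $E(\{t\})$ with the orthogonal projection onto $\ker(T-tI)=V_t$.
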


\begin{remark}
Note that we will need the exact form of a limit only for a proof of Theorem \ref{pointwise}, otherwise we just need the fact that the limit exists. 
\end{remark}

Another important result of \cite{krashalom2025} is the following adaptation of van der Corput lemma for $\IP$ averages.

\begin{lemma}[van der Corput lemma for increasing IP-Følner sequences]\label{vdc}

Let $\mathcal{H}$ be a Hilbert space with norm $\| \cdot \|$ and inner product $\inner{\cdot}{\cdot}$, and let $(x_n)_{n\in\N}$ be a sequence in $\mathcal{H}$. For a sequence $(n_j)_{j\in\N}$ and an increasing F\o lner sequence $\Phi = (\Phi_N)_{N\in\N}$, we have
$$
\begin{aligned}
& \limsup_{N\to\infty}
\left\|
\E_{n\in \IP_{\Phi_N}((n_j)_{j\in\N})} x_n
\right\|^2
\leq \\ & \qquad
\limsup_{M\to\infty}
\E_{m_1,m_2\in \IP_{\Phi_M}((n_j)_{j\in\N})}
\limsup_{N\to\infty}
\E_{n\in \IP_{\Phi_N^M}((n_j)_{j\in\N})}
\inner{x_{n+m_1}}{x_{n+m_2}}.
\end{aligned}
$$
\end{lemma}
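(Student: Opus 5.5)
We prove the van der Corput inequality directly, following the classical Hilbert-space argument. Along the IP-Følner sets, we replace averaging over a shifted interval by averaging over shifts $m\in F_M=\IP_{\Phi_M}(\ni)$. Fix $M$, and for $N>M$ abbreviate $F_N=\IP_{\Phi_N}(\ni)$ and $F_N^M=\IP_{\Phi_N^M}(\ni)$.

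The key structural fact is that, since $\Phi$ is increasing, $\Phi_M\subseteq \Phi_N$ and $\Phi_N=\Phi_M\sqcup\Phi_N^M$. Hence every element of $F_N$, viewed as an element of the multiset indexed by finite subsets of $\Phi_N$, decomposes uniquely as $n+m$ with $m\in F_M$ and $n\in F_N^M$. As multisets, $F_N=F_N^M+F_M$ and $|F_N|=|F_N^M|\,|F_M|$. Consequently, for every $N>M$ we have the \emph{exact} identity
$$
\E_{k\in F_N}x_k=\E_{n\in F_N^M}\,\E_{m\in F_M}x_{n+m}.
$$
This is the point where the classical proof would have an error term from the shift. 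Here there is none, which is precisely why $\Phi^M$ was introduced.

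Next we apply Cauchy--Schwarz (convexity of $\|\cdot\|^2$) to the outer average:
$$
\Bigl\|\E_{k\in F_N}x_k\Bigr\|^2\le \E_{n\in F_N^M}\Bigl\|\E_{m\in F_M}x_{n+m}\Bigr\|^2
=\E_{m_1,m_2\in F_M}\,\E_{n\in F_N^M}\inner{x_{n+m_1}}{x_{n+m_2}},
$$
where we expanded the square and exchanged the finite sums. Taking $\limsup_{N\to\infty}$ of both sides, and using that the average over $m_1,m_2$ is finite (a fixed finite multiset $F_M$), we may move the limsup inside. The limsup of a finite average is at most the average of the limsups. This gives
$$
\limsup_{N\to\infty}\Bigl\|\E_{k\in F_N}x_k\Bigr\|^2\le \E_{m_1,m_2\in F_M}\limsup_{N\to\infty}\E_{n\in F_N^M}\inner{x_{n+m_1}}{x_{n+m_2}},
$$
valid for every $M$. (If the inner products are complex, we use real parts; the left side is real, so this suffices.) The left side does not depend on $M$, so we may take $\limsup_{M\to\infty}$ on the right, which yields the statement. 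No boundedness of $(x_n)$ is needed, since no error terms appear, though one could assume it for safety.

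The only step requiring care is the multiset decomposition $F_N=F_N^M+F_M$. It must be read at the level of index sets, since distinct subsets may give equal sums. This is exactly why $\IP(\ni)$ was defined as a multiset, with averages weighted by $|F_N|=2^{|\Phi_N|}$. With the index-level interpretation the bijection $\mathcal P(\Phi_N)\cong\mathcal P(\Phi_M)\times\mathcal P(\Phi_N^M)$ is immediate, and the rest is routine.
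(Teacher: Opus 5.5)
Your proof is correct. The paper itself does not prove this lemma; it quotes it from \cite{krashalom2025}. The $\Phi^M$ notation is introduced exactly so that your argument works:
\begin{itemize}
\item Since $\Phi$ is nested, for $N\ge M$ the bijection $\mathcal{P}(\Phi_N)\cong\mathcal{P}(\Phi_M)\times\mathcal{P}(\Phi_N^M)$ gives the exact multiset splitting $\IP_{\Phi_N}(\ni)=\IP_{\Phi_M}(\ni)+\IP_{\Phi_N^M}(\ni)$.
\item Convexity of $\|\cdot\|^2$ then bounds the left-hand side by the double average over $m_1,m_2$.
\item Subadditivity of $\limsup$ over the finitely many pairs $(m_1,m_2)$ moves the $\limsup_N$ inside.
\end{itemize}
Your version is self-contained and uses no assumption on the spectrum of the generating sequence. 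It also gives the bound for each fixed $M$, so it holds with $\inf_M$ in place of $\limsup_M$.

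One claim needs tempering: that boundedness of $(x_n)$ is unnecessary. For unbounded sequences, some inner $\limsup$'s (taken of real parts) can be $+\infty$ while others are $-\infty$, and then the average over $(m_1,m_2)$ on the right-hand side is undefined. Whenever the right-hand side is well-defined your inequality holds, and every application in the paper involves bounded $x_n$.
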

When $\ni$ diverges sufficiently fast, $\IP\bigl(\ni\bigr)$ is a set of density zero. Thus, it will be convenient to define a relative notion for positive density in $\IP$s.

\begin{dfn}\label{ipratden}
Let $S\subseteq \N$ be a set. We say that $S$ has \textit{positive lower $\IPr$ density} if the relative density of $S$ in every $\IP$ with rational spectrum is positive. Formally, if for every sequence $\ni$ with rational spectrum and every increasing F\o lner sequence $\Phi=(\Phi_N)_{N\in\mathbb{N}}$ we have 
$$
\liminf_{N\rightarrow\infty } \frac{|\IP_{\Phi_N}(\ni)\cap S|}{|\IP_{\Phi_N}(\ni)|} > 0.
$$
\end{dfn}

\section{Characteristic factors for polynomial ergodic averages along $\IP$ sets}\label{polycharchapter}

This section follows closely Leibman's argument from \cite{polycharfactor}. Our first goal is to show that the Host--Kra factors are characteristic for multiple ergodic averages  in nonlinear and rationally independent polynomial iterates along rational IPs (although, note that our final result is that the rational Kronecker factor is in fact the minimal characteristic factor).

In order to formulate our main result we will need some definitions. First, let us define \textit{polynomial systems}.
\begin{dfn}
Let $r \in \N$, called a \textit{length}. A \textit{polynomial system} is a finite tuple
$$
P = (p_1,\ldots,p_r)
$$
of distinct integer-valued polynomials $p_i:\Z \to \Z$. 
We define the degree of $P$ by
$$
\deg P := \max_{1 \leq i \leq r} \deg p_i.
$$
The system $P$ is called \textit{standard} if each $p_i$ is non-constant, $p_i - p_j$
is non-constant for all $1 \leq i < j \leq r$, and $\deg(p_1) = \deg P$.
\end{dfn}

We adapt the notion of characteristic factors to averages along a rational $\IP$.
\begin{dfn}[$\IPr$-characteristic factor]\label{ipratchar}
Let $P = (p_1,\dots,p_r)$ be a polynomial system and $\mps$ a measure preserving system. A factor $ (Y,\mathcal{C},\nu,S)$ of $X$ is called an \textit{$\IP_{rat}$-characteristic} factor with respect to $P$, if for any sequence $\ni$ with rational spectrum and any increasing F\o lner sequence $\Phi = \fol$ we have 
$$
\lim_{N\rightarrow\infty}\left \| \E_{\IP_{\Phi_N}(\ni)}\prod T^{p_i(n)}f_i - \E_{\IP_{\Phi_N}(\ni)}\prod T^{p_i(n)}E(f_i \mid Y)\right \|_{L^2(X)} =0
$$
for all $f_1,\dots ,f_r\in L^{\infty}(X)$. Equivalently, $$
\lim_{N\rightarrow\infty}\left \| \E_{\IP_{\Phi_N}(\ni)}\prod T^{p_i(n)}f_i\right \|_{L^2(X)}=0
$$ whenever there exists some $i\in\{1,...,r\}$ such that $E(f_i\mid Y)=0$.
\end{dfn} 

%The equivalent definition that we will mostly use is the following:
%\begin{dfn}
%Let $P = (p_1,\dots,p_r)$ be a polynomials sequence, $\mps$ be a measure preserving system and $(Y,\mathcal{C},\nu,S)$ be a factors of $X$. Then $Y$ is called an \textit{$\IP_{rat}$ characteristic} factor of $P$, if for any sequence $\ni$ with rational spectrum and any increasing F\o lner sequence $\Phi = \fol$ we have that 
%$$
%\left \| \E_{\IP_{\Phi_N}(\ni)}\prod T^{p_i(n)}f_i\right \|_{L^2(X)} \xrightarrow[]{N\rightarrow \infty}0
%$$
%for all $f_1,\dots ,f_r\in L^{\infty}(X)$ such that there is an $i\in \{1,\dots,r\}$ with $E(f_i \mid Y) = 0$.
%\end{dfn} 

\subsection{Characteristic factors for linear averages along a rational $\IP$}
The multilinear case (i.e., when all the polynomials involved in the system $P$ are linear) requires a separate treatment. Fortunately, this case was already established in \cite{krashalom2025}.

\begin{thm}\label{ipcharlinear}
For each polynomial system $P$ with $\deg P = 1$, there is a $k\ge 1$, such that $Z_{k-1}(X)$ is an $\IPr$-characteristic factor of $P$.
\end{thm}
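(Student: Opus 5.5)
We argue by induction on the length $r$ of the linear system $P=(a_1n,\dots,a_rn)$, with distinct nonzero integers $a_i$, using the $\IP$ van der Corput lemma (Lemma~\ref{vdc}) and the Host--Kra seminorms. We prove the quantitative statement that there is $k=k(r)$ such that for every sequence $\ni$ with rational spectrum, every increasing F\o lner sequence $\Phi$ and all $1$-bounded $f_1,\dots,f_r$,
$$
\limsup_{N\to\infty}\Bigl\|\E_{n\in\IP_{\Phi_N}(\ni)}\prod_{i=1}^r T^{a_in}f_i\Bigr\|_{L^2}\le C\min_i \|f_i\|_{U^{k}},
$$
where $C$ depends only on $P$. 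Since $\|f\|_{U^k}=0$ exactly when $E(f\mid Z_{k-1})=0$, this gives the theorem. We may assume the system is ergodic via the ergodic decomposition, because the seminorm bound integrates over fibers.

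\textbf{Base case $r=1$.} By Theorem~\ref{ipmeanthm} the limit of $\E_{n\in\IP_{\Phi_N}(\ni)}T^{a_1n}f_1$ lies in the span of rational eigenfunctions. Indeed, $(a_1n_i)$ again has rational spectrum, or one can apply the theorem to $T^{a_1}$. Hence it vanishes when $E(f_1\mid K_{\rat})=0$, and in particular when $\|f_1\|_{U^2}=0$. A quantitative version follows from one application of Lemma~\ref{vdc} together with the mean ergodic theorem.

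\textbf{Inductive step.} We set $x_n=\prod_i T^{a_in}f_i$ and apply Lemma~\ref{vdc}. Composing with $T^{-a_1n}$ turns the inner average into
$$
\E_{n\in \IP_{\Phi^M_N}(\ni)}\int \prod_{i=2}^r T^{(a_i-a_1)n}\bigl(T^{a_im_1}f_i\,\overline{T^{a_im_2}f_i}\bigr)\cdot \bigl(T^{a_1m_1}f_1\overline{T^{a_1m_2}f_1}\bigr)\,d\mu .
$$
This is the $L^2$ inner product of $T^{a_1m_1}f_1\overline{T^{a_1m_2}f_1}$ with an average of length $r-1$ along the $\IP$ generated by the tail $(n_j)_{j\in\Phi^M}$. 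That tail still has rational spectrum, and $\Phi^M$ is still increasing, possibly after reindexing.

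By Cauchy--Schwarz and the induction hypothesis, applied to the distinct nonzero differences $a_i-a_1$, the inner average is bounded by $\|T^{a_rm_1}f_r\overline{T^{a_rm_2}f_r}\|_{U^{k(r-1)}}$. We then must control
$$
\limsup_M \E_{m_1,m_2\in\IP_{\Phi_M}}\bigl\|\Delta_{a_r(m_1-m_2)}f_r\bigr\|_{U^{k'}},\qquad k'=k(r-1),
$$
after using $T$-invariance to rewrite $T^{a_rm_1}f_r\overline{T^{a_rm_2}f_r}$ as a shift of $f_r\,\overline{T^{a_r(m_2-m_1)}f_r}$. By H\"older we raise the seminorm to the power $2^{k'}$. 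Expanding $\|\Delta_h f\|_{U^{k'}}^{2^{k'}}$ gives a limit of standard Ces\`aro averages of correlations $\int\prod_\omega T^{h\epsilon+\omega\cdot \mathbf{t}}\cdots$. Hence we need an $\IP$-average over $h=m_2-m_1$ of quantities of the form $\langle T^{a_rh}F,G\rangle$, where $F,G$ live on the cube system $X^{[k']}$. Applying the base case, that is, Theorem~\ref{ipmeanthm} on the product system with the double average over $m_1,m_2$ handled via independent $\IP$ coordinates, bounds this average by the projection onto the rational Kronecker factor of the cube system. That projection is in turn controlled by a $U^2$-seminorm on the cube, hence by $\|f_r\|_{U^{k'+2}}$ or $\|f_r\|_{U^{k'+1}}$. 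So $k(r)=k(r-1)+2$ suffices. Symmetry, or reordering the $a_i$, gives control by each $f_i$.

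\textbf{Main obstacle.} The hard step is the last one: the differences $m_1-m_2$ range over a difference set of an $\IP$, not over an $\IP$ or a F\o lner set. So we cannot directly invoke the usual identity $\lim \E_h\|\Delta_h f\|_{U^{k}}^{2^k}=\|f\|_{U^{k+1}}^{2^{k+1}}$. We plan to circumvent this by not passing to $h$ at all. Instead, we treat the average over $(m_1,m_2)$ as a mean ergodic average for the $\Z^2$-action $(T^{a_r}\times\mathrm{id},\mathrm{id}\times T^{a_r})$ on $X^{[k']}\times X^{[k']}$ along the product $\IP$. Each coordinate then gives, by Theorem~\ref{ipmeanthm}, a projection onto rational eigenfunctions. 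Such eigenfunctions for the cube system are measurable with respect to the rational Kronecker factor of $X$, so a bound by a fixed Host--Kra seminorm of $f_r$ follows, at the cost of possibly a larger $k$. This suffices, since the theorem only asserts existence of some $k$.
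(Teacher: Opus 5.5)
The paper does not prove this statement; it imports it from \cite{krashalom2025}. Your self-contained route has a genuine gap at its foundation. The quantitative inequality you induct on, $\limsup_N\|\E_{n\in\IP_{\Phi_N}(\ni)}\prod_i T^{a_in}f_i\|_2\le C\min_i\|f_i\|_{U^k}$ with $C=C(P)$, is false already for $r=1$, for every $k$. Take a prime $q>k$, $X=\Z/q\Z$ with $Tx=x+1$, $f(x)=e^{2\pi i x^k/q}$, and $n_i=q\cdot 10^{i-1}$, which has rational spectrum. Every element of $\IP(\ni)$ is divisible by $q$, so $\E_{n\in\IP_{\Phi_N}(\ni)}T^{a_1n}f=f$ has norm $1$ for every $N$. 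On the other hand, $\|f\|_{U^k}^{2^k}=\E_{h\in(\Z/q\Z)^k}e^{\pm 2\pi i\,k!\,h_1\cdots h_k/q}=1-(1-1/q)^{k-1}\le (k-1)/q$. The only valid base-case estimate is $\|\lim_N\E_{n}T^{a_1n}f\|_2\le\|E(f\mid\Krat(X))\|_2$, which follows from Theorem~\ref{ipmeanthm} since $|\omega_\Phi(t)|\le 1$, and no Host--Kra seminorm dominates that quantity. Your remark that one application of Lemma~\ref{vdc} gives a quantitative version is circular: for $r=1$ the right-hand side of the lemma is just $\limsup_M\|\E_{m\in F_M}T^{a_1m}f\|^2$. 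This breaks your inductive step. There you feed the $(m_1,m_2)$-dependent functions $\Delta_{a_r(m_1-m_2)}f_r$ into the length-$(r-1)$ hypothesis and then average the resulting bounds, which needs a genuinely quantitative estimate. The qualitative statement ``the limit vanishes when $E(f\mid Z_{k-1})=0$'' cannot be averaged, because the individual $\Delta_h f_r$ are not orthogonal to $Z_{k'-1}$.

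The same defect reappears in your last step, and the ``main obstacle'' you name is not the real one. Let $S=T^{[k']}$ and let $F$ be the usual cube function built from $f_r$, so that $\|\Delta_hf_r\|_{U^{k'}}^{2^{k'}}=\langle F,S^hF\rangle$. Then $\E_{m_1,m_2\in F_M}\langle S^{a_rm_1}F,S^{a_rm_2}F\rangle=\|\E_{m\in F_M}S^{a_rm}F\|^2$ is an honest $\IP$ average, and the difference set plays no role. Its limit is at most $\|E(F\mid\Krat(X^{[k']}))\|_2^2$. This is governed by the rational Kronecker factor of the cube system, not by its invariant factor, so it is not bounded by a power of $\|f_r\|_{U^{k'+1}}$ or by a $U^2$ seminorm on the cube. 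Moreover, rational eigenfunctions of the cube system need not be $\Krat(X)$-measurable: for ergodic $X$, $X^{[1]}=X\times X$, and $e_\lambda\otimes\overline{e_\lambda}$ is $T\times T$-invariant for every eigenvalue $\lambda$, including irrational ones. Qualitatively, the quantity does vanish when $\|f_r\|_{U^{k'+1}}=0$. Since $\langle F,S^hF\rangle\ge 0$, restricting the Ces\`aro average to $h\in q\N$ gives $\|E(F\mid\mathcal{I}(S^q))\|_2^2\le q\,\|f_r\|_{U^{k'+1}}^{2^{k'+1}}$, and $\Krat(X^{[k']})$ is generated by the $\sigma$-algebras $\mathcal{I}(S^q)$ of $S^q$-invariant sets. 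The factor $q$ is exactly why no uniform seminorm bound exists. To repair the argument you would have to run the whole induction with a control quantity adapted to rational $\IP$ limits, for instance by replacing the invariant factor of the cube system with its rational Kronecker factor in the definition of the seminorms. You would then need to prove the van der Corput step for that quantity, which requires interchanging $\limsup_M$ with the approximation of $\Krat$ by the factors $\mathcal{I}(S^q)$ and is not automatic. Only at the end would you compare with $Z_{k-1}$. This is the substance of the linear result the paper cites rather than reproves.
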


\subsection{Preliminary lemma}

In this subsection we prove a technical lemma that we will need for an induction step in proof of the Theorem~\ref{polychar}.

\begin{dfn}[$\IP$-small sets]
Let $A$ be a subset of $\Z^d$, and denote by $\ind{A}:\Z^d\rightarrow \{0,1\}$ be the indicator of $A$. We say that $A$ is $\IP$-small if for each sequence $\ni$ and an increasing F\o lner sequence $\Phi = \fol$ we have that 
$$
\lim_{N\rightarrow \infty}\E_{n_1,\dots,n_d\in \IP_{\Phi_N}(\ni)}\ind{A}(n_1,\dots,n_d) = 0
$$
\end{dfn}
In order to apply the van der Corput lemma iteratively we will need the following lemma that guarantees that at each step the number of constant polynomials is $\IP$-small.

\begin{lemma}\label{ipsmalllemma}
Let $p,q:\Z\to\Z$ be polynomials, and assume that at least one of them has degree greater than $1$. Then the set
$$
S:=\{(v,w)\in \Z^2: p(x+v)-q(x+w)\text{ is constant in }x\}
$$
is $\IP$-small.
\end{lemma}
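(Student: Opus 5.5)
The plan is to show that $S$ is either empty or contained in a single line $\{(v,w): v-w=c\}$, and then to prove that such a line is $\IP$-small by an anti-concentration bound for subset sums.

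\emph{Step 1 (algebraic reduction).} Write $p(x)=a x^{d}+a' x^{d-1}+\dots$ and $q(x)=b x^{e}+b' x^{e-1}+\dots$ with $a,b\neq 0$, and assume without loss of generality that $d\ge e$, so $d\ge 2$. I will expand $p(x+v)-q(x+w)$ in powers of $x$ and split into cases.

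If $d>e$, the coefficient of $x^{d}$ is $a\neq 0$. Since $d\ge 1$, the difference is never constant, so $S=\emptyset$.

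If $d=e$ but $a\neq b$, the coefficient of $x^d$ is $a-b\neq 0$, and again $S=\emptyset$.

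If $d=e$ and $a=b$, then the coefficient of $x^{d-1}$ is
$$a d v+a'-(a d w+b') = a d (v-w)+(a'-b').$$
Since $d-1\ge 1$, this coefficient must vanish for the difference to be constant, which forces $v-w=c:=(b'-a')/(ad)$. Hence $S\subseteq L_c:=\{(v,w)\in\Z^2: v-w=c\}$. This is exactly where the hypothesis $\deg\ge 2$ enters: for linear $p,q$ the set $S$ could be all of $\Z^2$.

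\emph{Step 2 (lines are $\IP$-small).} Fix a sequence $\ni$ and an increasing F\o lner sequence $\Phi$, and set $m_N=|\Phi_N|$, so that $|F_N|=2^{m_N}$ counted as a multiset. The average we need to control is
$$\E_{v,w\in F_N}\ind{L_c}(v,w)=\frac{1}{|F_N|}\sum_{v\in F_N}\frac{\#\{w\in F_N: w=v-c\}}{|F_N|}.$$
It therefore suffices to bound, uniformly in $t$, the proportion of subsets $I\subseteq\Phi_N$ with $\sum_{i\in I}n_i=t$. The multiset nature of $F_N$ is the only subtlety, since values may repeat many times (for instance, when all $n_i=1$).

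To handle this I will invoke the Erd\H{o}s--Littlewood--Offord inequality. For nonzero reals $n_{i}$, $i\in\Phi_N$, the number of subsets with any prescribed sum is at most $\binom{m_N}{\lfloor m_N/2\rfloor}\le C\,2^{m_N}/\sqrt{m_N}$. The proof is by Sperner's theorem: the subsets with a fixed sum form an antichain, because all $n_i$ are positive. This gives
$$\E_{v,w\in F_N}\ind{S}(v,w)\le \frac{C}{\sqrt{m_N}}.$$

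\emph{Step 3.} Finally, $m_N\to\infty$, since a F\o lner sequence of finite sets in $\N$ must have $|\Phi_N|\to\infty$; for an increasing one, $\Phi_N=[M,a_N]$ with $a_N$ increasing. Hence the average tends to $0$, and $S$ is $\IP$-small.

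I expect the main obstacle to be Step 2, specifically justifying uniform anti-concentration for arbitrary multiset $\IP$s. Sperner's theorem resolves this cleanly, and the rest is bookkeeping.
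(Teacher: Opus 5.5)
Your proposal is correct and follows the paper's proof essentially step for step. The vanishing of the $x^{d-1}$ coefficient confines $S$ to a line $v-w=c$, and Sperner's theorem, applied to the antichain of subsets of $\Phi_N$ with a prescribed sum, bounds that line's density by $\binom{m_N}{\lfloor m_N/2\rfloor}2^{-m_N}\to 0$; your explicit check that $|\Phi_N|\to\infty$ and the $C/\sqrt{m_N}$ rate are small details the paper leaves implicit.
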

\begin{proof}
We first show that $S$ is contained in a line of the form
$$
\{(v,w)\in \Z^2: v-w=c\}
$$
for some $c\in \Z$. Let
$$
n:=\max(\deg p,\deg q).
$$
By assumption, either $p$ or $q$ has degree greater than $1$ and so $n>1$. Write
$$
p(x)=a_nx^n+a_{n-1}x^{n-1}+\cdots,
\qquad
q(x)=b_nx^n+b_{n-1}x^{n-1}+\cdots,
$$
where we allow $a_n=0$ if $\deg p<n$ and $b_n=0$ if $\deg q<n$.\\

We may assume that $a:=a_n=b_n\ne 0$, as otherwise $S= \varnothing$ and the lemma obviously holds. Since $p(x+v)-q(x+w)$ is constant in $x$, the coefficient of $x^{n-1}$ must vanish. Expanding the brackets, we see that this coefficient is equal to
$$
(na v+a_{n-1})-(na w+b_{n-1})
=
na(v-w)+a_{n-1}-b_{n-1}.
$$
Therefore
$$
na(v-w)+a_{n-1}-b_{n-1}=0,
$$
and hence
$$
v-w=\frac{b_{n-1}-a_{n-1}}{na}.
$$
If the number on the right hand side is not an integer, then no integer pair $(v,w)$ can satisfy this equation and $S=\emptyset$. Otherwise, writing
$$
c:=\frac{b_{n-1}-a_{n-1}}{na}\in\Z,
$$
we get
$$
S\subseteq H_c:=\{(v,w)\in\Z^2:v-w=c\}.
$$

It remains to show that $H_c$ is $\IP$-small. Let $(n_i)_{i\in\N}$ be a sequence of natural numbers, and let $\Phi=(\Phi_N)_{N\in\N}$ be a Følner sequence. We estimate
$$
\E_{v,w\in F_N} \ind{H_c}(v,w).
$$
Equivalently, we count pairs of subsets $(A,B)$ of $\Phi_N$ satisfying
$$
\sum_{i\in A}n_i-\sum_{i\in B}n_i=c.
$$
Fix $B\subseteq \Phi_N$. Then the admissible sets $A\subseteq \Phi_N$ satisfy
$$
\sum_{i\in A}n_i=\sum_{i\in B}n_i+c.
$$
The collection of such $A$ forms an antichain. Indeed, if $A\subsetneq A'$, then, since all $n_i$ are positive,
$$
\sum_{i\in A'}n_i>\sum_{i\in A}n_i,
$$
so $A$ and $A'$ cannot both have the same sum.

By Sperner's theorem, for each fixed $B$, the number of possible $A$ is at most
$$
\binom{|\Phi_N|}{\lfloor |\Phi_N|/2\rfloor}.
$$
Since there are $2^{|\Phi_N|}$ choices for $B$, we get
$$
\E_{v,w\in F_N} \ind{H_c}(v,w)
\le
\frac{
2^{|\Phi_N|}
\binom{|\Phi_N|}{\lfloor |\Phi_N|/2\rfloor}
}{
2^{2|\Phi_N|}
}
=
\frac{
\binom{|\Phi_N|}{\lfloor |\Phi_N|/2\rfloor}
}{
2^{|\Phi_N|}
}\xrightarrow{N\rightarrow \infty} 0,
$$
where the last limit follows from standard manipulations involving the Stirling approximation.
\end{proof}

\subsection{$\mathrm{PET}$ induction for standard polynomial families}
We will proceed by PET induction (first introduced by Bergelson in \cite{BergelsonPET}). First, we define a well ordering on polynomial families. 
\begin{dfn}
Let $P = (p_1,\dots,p_r)$ be a polynomial system. We say that two polynomials $p,q$ are \emph{equivalent} if $\deg p = \deg q$ and $\deg (p-q) < \deg p $.

For all $j = 1,\dots,\deg P$, let $\omega_j$ denote the number of equivalence classes of members of $P$ of degree $j$. The \textit{weight} of the family $P$ is defined to be $\omega(P) \colon= (\omega_1,\dots,\omega_{\deg P})$.

This defines a well-order on polynomial families by embedding the weights in $\mathbb{N}^\infty$ and ordering lexicographically. 
\end{dfn}

\begin{ex}
For $P = (n^3+n,n^2+2n,2n)$, then $\omega (P) = (1,1,1)$. If $Q = (n^4+n^2,n^4+n,3n,2n,n)$, then $\omega(Q) = (3,0,0,1)$. In this case, $\omega(P)< \omega(Q)$.
\end{ex}

The next proposition is an IP adaptation of a result of Leibman (see \cite[Proposition 9]{polycharfactor}). Our proof is almost verbatim, where we replace every instance of the van der Corput lemma with its IP-adaptation (i.e., Lemma \ref{vdc}). We provide a proof for the sake of completeness.
\begin{prop}[$Z_{k-1}(X)$ is $\IPr$-characteristic for standard polynomial systems]\label{polycharforstandart}
Let $\mps$ be an invertible measure preserving system and $\omega = (\omega_1,\dots,\omega_t)\in \mathbb{N}^t$ for some $t\in \N$. Then for all $r\geq 1$, there exists $k\ge 1$, such that for all integer polynomial families $p_1, \dots , p_r : \Z \rightarrow \Z$  , such that the system $P = (p_1,\dots,p_r)$ is standard with weight $\omega$, and for all $f_1,\dots, f_r\in L^\infty (X)$ with $E(f_1\mid Z_{k-1}(X)) = 0$, all rational $\ni$ and all increasing F\o lner sequences $\Phi = (\Phi_N)_{N\in \N}$, we have 

$$
\lim_{N\to\infty}\left \|\E_{ n\in F_N}\prod_{i=1}^r T^{p_i( n)}f_i\right \|^2_{L^2(X)} = 0.
$$

\begin{proof}
We prove the proposition by PET induction on the weight $\omega$.  The
linear case follows from the linear characteristic factor result for $\IP$-averages (see., \cite{krashalom2025}), so we may assume that $\deg P\geq 2$.

Fix $r\in \mathbb{N}$ and a weight $\omega$, and assume inductively that the
claim is known for every standard system of weight $\omega'<\omega$. Choose $k$ sufficiently large so that the claim holds for all such systems and
of length at most $2r$.\\
\noindent Let $
P=(p_1,\ldots,p_r)
$ be a standard system of weight $\omega$ and let $f_1,\ldots,f_r\in L^\infty(\mu)$ with
$$
E(f_1\mid Z_{k-1}(X))=0.
$$
By multilinearity and approximation, we may assume that
$$
\|f_i\|_\infty\leq 1
\qquad\text{for all } i=1,\ldots,r.
$$
Let $(n_j)_{j\in\mathbb{N}}$ be a sequence with  rational spectrum and let
$\Phi=(\Phi_N)_{N\in\mathbb{N}}$ be an increasing Følner sequence.  We write
$F_N=\IP_{\Phi_N}((n_j)_{j\in\mathbb{N}})$. Next, write 
$$
I_1:=\{i:\deg p_i=1\},
\qquad
I_2:=\{i:\deg p_i\geq 2\}.
$$
Since $P$ is standard and $\deg P\geq 2$, we have $1\in I_2$.

If $r=1$, put $i_0 = 1$. Otherwise, choose $i_0\in\{2,\ldots,r\}$ as follows: if not all
polynomials have the same degree, choose $p_{i_0}$ of minimal degree; if all
have the same degree but are not all equivalent to $p_1$, choose $p_{i_0}$
not equivalent to $p_1$; and if all are equivalent, choose any
$i_0\neq 1$.

For $v,w\in\mathbb{Z}$, define the auxiliary family
$$
P_{v,w}:=
\{p_i(n+v),p_i(n+w):i\in I_2\}
\cup
\{p_i(n+w):i\in I_1\}.
$$
Order this family as
$$
P_{v,w}=(q_{v,w,1},\ldots,q_{v,w,s})
$$
so that
$$
q_{v,w,1}(n)=p_1(n+v),
\qquad
q_{v,w,s}(n)=p_{i_0}(n+w).
$$
By Lemma \ref{ipsmalllemma}, the set of pairs $(v,w)$ for which
$P_{v,w}$ is not standard is $\IP$-small. Thus, we may define 
$$
P'_{v,w}:=
(q_{v,w,1}-q_{v,w,s},\ldots,q_{v,w,s-1}-q_{v,w,s}).
$$
For $\IP$-almost every $(v,w)$, this is a standard system.  Moreover, by the
standard PET reduction, the weight of $P'_{v,w}$ is strictly smaller than $\omega$:
$$
\omega(P'_{v,w})<\omega.
$$

The idea behind building $P'$ is that it is precisely the family that we will get after applying the $\IP$ van der Corput lemma (Lemma~\ref{vdc}) and using the $T$ invariance of $\mu$. 
Indeed,

$$
\begin{aligned}
& \limsup_{N\to\infty}
\left\|
\E_{ n\in F_N}\prod_{i=1}^r T^{p_i( n)}f_i
\right\|^2 \leq \\ & \qquad
\limsup_{M\to\infty}
\E_{ m, m'\in F_M}
\limsup_{N\to\infty}
\E_{ n\in (F_N^M)}
\int_X \prod_{i=1}^r T^{p_i( n +  m)}f_i \prod_{i=1}^r T^{p_i( n +  m')}\overline{f_i} \,d\mu \\  &
\qquad =
\limsup_{M\to\infty}
\E_{ m, m'\in F_M}
\limsup_{N\to\infty}
\E_{ n\in (F_N^M)}
\int_X \prod_{i\in I_2} T^{p_i( n +  m)}f_i \cdot T^{p_i( n +  m')}\overline{f_i}\prod_{i\in I_1} T^{p_i( n +  m')}\left(
\overline{f_i} T^{p_i(m)-p_i(m')}f_i \right ) \,d\mu \\ &
\qquad =
\limsup_{M\to\infty}
\E_{ m, m'\in F_M}
\limsup_{N\to\infty}
\E_{ n\in (F_N^M)}
\int_X h_{m,m',s} \prod_{i= 1}^{s-1} T^{(q_{m,m',i}-q_{m,m',s})(n)}h_{m,m',i}\,d\mu \le \\&
\qquad \le
\limsup_{M\to\infty}
\E_{ m, m'\in F_M}
\|h_{m,m',s}\|_{L^\infty}\limsup_{N\to\infty}
\left \|\E_{ n\in F_N^M}\prod_{i= 1}^{s-1} T^{(q_{m,m',i}-q_{m,m',s})(n)}h_{m,m',i} \right \|_{L^2(\mu)}
\end{aligned}
$$

Where each $h_{m,m',i}$ is equal to $f_i$, $\overline{f_i}$ or $\overline{f_i}T^{p_i(m)-p_i(m')}f_i$. Since $p_1$ is nonlinear (otherwise we would be in a base case), we have that $h_{m,m',1} = f_1$, so applying the induction hypothesis to a system $P'_{m,m'}$, we get that for  all $m,m'$  outside of an $\IP$-small set 
$$
\limsup_{N\to\infty}
\left \|\E_{ n\in F_N^M}\prod_{i= 1}^{s-1} T^{(q_{m,m',i}-q_{m,m',s})(n)}h_{m,m',i} \right \|_{L^2(\mu)} = 0
$$
And since the rest of the elements are bounded, we get that 
$$
\limsup_{M\to\infty}
\E_{ m, m'\in F_M}
\|h_{m,m',s}\|_{L^\infty}\limsup_{N\to\infty}
\left \|\E_{ n\in F_N^M}\prod_{i= 1}^{s-1} T^{(q_{m,m',i}-q_{m,m',s})(n)}h_{m,m',i} \right \|_{L^2(\mu)} = 0.
$$
This completes the proof.

\end{proof}
\end{prop}

\subsection{Reduction from general families to a standard case}The next proof follows closely the proof of Theorem 3 from \cite{polycharfactor}. 

\begin{thm}[The Host--Kra factors are $\IPr$-characteristic for nonlinear polynomials]\label{polychar}
For each polynomial system $P$ of nonlinear polynomials, there is a $k\ge 1$, such that $Z_{k-1}(X)$ is an $\IPr$-characteristic factor of $P$.
\end{thm}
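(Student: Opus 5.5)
We reduce an arbitrary system $P=(p_1,\dots,p_r)$ of nonlinear polynomials to the standard case handled by Proposition~\ref{polycharforstandart}, following Leibman's proof of \cite[Theorem 3]{polycharfactor}. By the equivalent formulation in Definition~\ref{ipratchar} and the symmetry of the statement in the indices, it suffices to show the following. If $E(f_j\mid Z_{k-1}(X))=0$ for some $j$, then $\|\E_{n\in F_N}\prod_i T^{p_i(n)}f_i\|_{L^2}\to 0$ for every rational $\ni$ and every increasing F\o lner sequence $\Phi$.

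\emph{Step 1: removing redundancies.} We may assume the $p_i$ are distinct. If $p_i=p_{i'}$, we merge $f_i f_{i'}$ into a single function; this keeps it bounded. If the function carrying the assumption $E(\cdot\mid Z_{k-1})=0$ is among those merged, we instead argue via the lower-length case. Concretely, we induct on $r$ and use that, by Cauchy--Schwarz, a product in which $f_j$ appears still lets us isolate $f_j$. Alternatively, we first note that $P$ has distinct polynomials in the setting of the theorem, since a polynomial system is a tuple of distinct polynomials by definition. Constant polynomials do not occur, because all $p_i$ are nonlinear.

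\emph{Step 2: making differences nonconstant.} Suppose $p_i-p_{i'}=c$ is a nonzero constant. Then $T^{p_i(n)}f_iT^{p_{i'}(n)}f_{i'}=T^{p_{i'}(n)}(f_{i'}\,T^{c}f_i)$, which shortens the system. The issue is preserving the vanishing conditional expectation. If $f_j$ is one of the two merged functions, we cannot conclude directly, so we handle this as in Leibman. Partition $P$ into classes under the relation ``$p_i-p_{i'}$ is constant''. Within the class of $p_j$, write all members as $p_j+c_i$. The product over that class is then $T^{p_j(n)}g$ with $g=\prod_i T^{c_i}f_i$. We choose $k$ large, and use that the Host--Kra seminorm controls such products: $\|g\|$ is bounded via a dual/Gowers--Cauchy--Schwarz inequality. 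More precisely, the seminorm $\nnorm{\prod_i T^{c_i}f_i}_{k}$ is not controlled by $\nnorm{f_j}_k$ alone. So we proceed as Leibman does: we apply the van der Corput Lemma~\ref{vdc} once to the whole product. This turns the class of $p_j$ into the family $p_j(n+m)+c_i$, $p_j(n+m')+c_i$, and Lemma~\ref{ipsmalllemma} says that for $\IP$-almost all $(m,m')$ the new differences across classes are nonconstant. Iterating reduces to a system in which each class containing the $f_j$-slot is a singleton.

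\emph{Step 3: ordering.} We reorder so that the slot carrying $f_j$ has maximal degree. If $\deg p_j<\deg P$, we apply Lemma~\ref{vdc} repeatedly in the PET fashion of Proposition~\ref{polycharforstandart}, each time subtracting a polynomial of maximal degree. This lowers the weight until $p_j$ becomes of maximal degree, while $f_j$ survives untouched in its slot. Each step is valid off an $\IP$-small set by Lemma~\ref{ipsmalllemma}, which requires the nonlinearity. Nonlinearity is preserved for the polynomials that remain relevant, though linear ones may appear among the differences. Proposition~\ref{polycharforstandart} allows such linear ones as long as the first polynomial is nonlinear. Once $p_j$ is first and the system is standard, Proposition~\ref{polycharforstandart} gives the conclusion for some $k$ depending only on the weights and lengths encountered. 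Since these are finitely many, a single $k$ works.

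\emph{Main obstacle.} The delicate point is Step 2: handling constant differences that involve the distinguished function $f_j$ without losing the hypothesis $E(f_j\mid Z_{k-1})=0$. I would first try to perform one IP van der Corput step before any merging, so that constant differences become $\IP$-almost surely nonconstant by Lemma~\ref{ipsmalllemma}.
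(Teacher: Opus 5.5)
\textbf{Step 2 is a genuine gap, and it cannot be closed.}

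A van der Corput step shifts every polynomial of one copy by the same amount. So the slots $p_j(n+m)+c_i$ and $p_j(n+m)+c_{i'}$ still differ by the constant $c_i-c_{i'}$ for every $(m,m')$. Lemma~\ref{ipsmalllemma} only controls cross pairs $p(n+m)$ versus $q(n+m')$. Iterating therefore never makes the class of the $f_j$-slot a singleton.

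In fact the statement is false for systems with constant differences. Take $P=(n^2,n^2+1)$ and a Bernoulli system $X$; it is weakly mixing, so every $Z_{k-1}(X)$ is trivial. Take a bounded $g\neq 0$ with $\int g\,d\mu=0$, and set $f_1=\overline{Tg}$, $f_2=g$.
\begin{itemize}
\item $E(f_1\mid Z_{k-1}(X))=0$ for every $k$.
\item $T^{n^2}f_1\cdot T^{n^2+1}f_2=T^{n^2+1}|g|^2$, so $\int\E_{n\in F_N}\prod_i T^{p_i(n)}f_i\,d\mu=\|g\|_2^2$ for every $N$.
\end{itemize}
Hence the $L^2$ norms do not tend to $0$, for any $k$.

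So essential distinctness ($p_i-p_{i'}$ nonconstant for $i\neq i'$) has to be a hypothesis. The paper's proof assumes it in its first sentence, presenting it as a reduction by regrouping functions. As you correctly sensed, regrouping destroys $E(f_j\mid Z_{k-1}(X))=0$, so it is really an extra assumption. You should adopt it rather than try to derive it.

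\textbf{Under that assumption, Steps 1--2 become unnecessary and Step 3 is a valid route, slightly different from the paper's.}

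If $\deg p_j=\deg P$, the system with $p_j$ placed first is already standard, and Proposition~\ref{polycharforstandart} applies directly.

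Otherwise, do the following once:
\begin{itemize}
\item apply Lemma~\ref{vdc};
\item compose with $T^{-p_{i_0}(n+m')}$ for some top-degree $p_{i_0}$;
\item discard $\overline{f_{i_0}}$.
\end{itemize}
This places $f_j$ in the slot $p_j(n+m)-p_{i_0}(n+m')$, of degree $\deg P\ge 2$. The new family is standard for $\IP$-almost every $(m,m')$. Same-shift differences are handled by essential distinctness. Cross differences are handled by Lemma~\ref{ipsmalllemma}, which is where nonlinearity enters.

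Note that this is a single step, not an iteration, and it does not lower the PET weight. So your description of it as a repeated weight-lowering procedure is inaccurate. No lowering is needed, because the proposition holds for every weight. Only finitely many weights of length at most $2r$ and degree at most $\deg P$ can occur, so a single $k$ works.

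The paper proceeds differently after its van der Corput step. It composes with $T^{q(n)}$, $q(n)=n^{b+1}$, so that all $2r$ polynomials $p_i(n+m)+q(n)$ and $p_i(n+m')+q(n)$ share the top degree. Then $f_1$'s slot can be placed first with no case distinction. Your subtraction achieves the same effect without raising the degree.
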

\begin{proof}
Let $P=(p_1,\ldots,p_r)$ be a polynomial system. By changing polynomials up to constants, and then grouping the functions that are averaged by the same polynomial, we may assume that the
polynomials are nonconstant, have zero constant term, and are essentially
distinct, meaning that $p_i-p_j$ is nonconstant whenever $i\neq j$.

It is enough to prove that for every
$a\in\{1,\ldots,r\}$ there exists $k_a\geq 1$ such that, whenever
$$
E(f_a\mid Z_{k_a-1}(X))=0,
$$
we have
$$
\lim_{N\to\infty}
\left\|
\E_{n\in F_{N}}
\prod_{i=1}^r T^{p_i(n)}f_i
\right\|_{L^2(\mu)}
=0.
$$
Without loss of generality we may assume that $a = 1$.

Let
$$
b=\deg P=\max_{1\leq i\leq r}\deg p_i.
$$
Choose an integer-valued polynomial $q:\Z\to\Z$ of degree $b+1$ and
with $q(0)=0$, for example $q(n)=n^{b+1}$ will work.

Now, applying the $\IP$ van der Corput lemma \ref{vdc}, we get that 

$$
\begin{aligned}
&\limsup_{N\to\infty}\|\E_{n\in F_N} \prod_{i=1}^r T^{p_i(n)}f_i\|_{L^2(\mu)}^2
\leq \\ & \qquad
\limsup_{M\to\infty}
\E_{m,m'\in F_M}
\limsup_{N\rightarrow\infty}
\E_{n\in F_N^M}
\int_X
\prod_{i=1}^r T^{p_i(n+m)}f_i
\prod_{i=1}^r T^{p_i(n+m')}\overline{f_i}
\,d\mu \\ & \qquad
= \limsup_{M\to\infty}
\E_{m,m'\in F_M}
\limsup_{N\rightarrow\infty}\E_{n\in F_N^M}
\int_X
\prod_{i=1}^r T^{p_i(n+m)+q(n)}f_i
\prod_{i=1}^r T^{p_i(n+m')+q(n)}\overline{f_i}
\,d\mu \\ & \qquad
\le  \limsup_{M\to\infty} \E_{m,m'\in F_M}
\limsup_{N\rightarrow\infty}
\left \|
\E_{n\in F_N^M}
\prod_{i=1}^r T^{p_i(n+m)+q(n)}f_i
\prod_{i=1}^r T^{p_i(n+m')+q(n)}\overline{f_i}
\,
\right \|_{L^2(X)}
\end{aligned}
$$

Define a new polynomial family 
$$P'_{m,m'} = \{p_i(n+m)+q(n):i\in \{1,\dots,r\}\}\cup \{p_i(n+m')+q(n):i\in \{1,\dots,r\}\}$$
We order this family so that the first polynomial is
$
p_1(n+m)+q(n).
$
The family
$
P'_{m,m'}
$
is standard (note that this is the only place where we use the fact that each $p_i$ is nonlinear) for $\IP$-almost every $m,m'\in \Z$ (by Lemma~\ref{ipsmalllemma}). Now we may apply Proposition~\ref{polycharforstandart} to the system $P'_{m,m'}$, and thus complete the proof.
\end{proof}

\section{Pointwise convergence along $\IP$ sets over nilmanifolds}
In \cite{krashalom2025} Kra and the second author have established pointwise convergence for continuous functions on continuous toral systems  (see \cite[Theorem 5.3]{krashalom2025}). This implies a pointwise convergence on nilsystems $G/\Gamma$ for some dense family of functions $\mathcal{F}\subseteq L^2(\mu_{G/\Gamma})$. In this section we are strengthening this result by showing that $\mathcal{F}$ can be taken to be the family of continuous functions.

\subsection{Vertical characters}

We first need to establish some technical statement about a vertical characters on nilmanifolds.

\begin{dfn}
Let $X = G/\Gamma$ be a $k$-step nilmanifold. The function $f\in C(X)$ is called a \textit{vertical character} if there exists a $\Gamma_k$ invariant character $\chi :G_k\rightarrow S^1$, such that for all $g\in G_k$ and $x\in X$, we have $f(gx) = \chi(g)f(x)$. $\chi$ is called the \textit{frequency} of $f$.
\end{dfn}

\begin{prop}\label{vechardense}
Let $X = G/\Gamma$ be a $k$-step nilmanifold. The algebra generated by all vertical characters is dense in $C(X)$ with respect to the uniform norm.
\end{prop}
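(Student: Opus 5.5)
We will use the Stone--Weierstrass theorem. The algebra $\mathcal{A}$ generated by the vertical characters is closed under complex conjugation, since the conjugate of a vertical character with frequency $\chi$ is a vertical character with frequency $\bar\chi$. It contains the constants, since $1$ is a vertical character with trivial frequency. So it suffices to show that $\mathcal{A}$ separates points of $X$. In fact we will show more: the linear span of vertical characters is already dense in $C(X)$. This reduces to a Fourier decomposition along the fibres of the compact abelian group action of the torus $G_k/\Gamma_k$ on $X$.

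\textbf{Step 1: the torus action.} The group $G_k$ is central in $G$, and $\Gamma_k=G_k\cap\Gamma$ is cocompact in $G_k$ (standard for nilmanifolds). So $K:=G_k/\Gamma_k$ is a compact abelian Lie group, and it acts continuously on $X$ by $(g\Gamma_k)\cdot x\Gamma = gx\Gamma$. This is well defined: if $g\in\Gamma_k\subseteq \Gamma$, then by centrality $gx\Gamma=xg\Gamma=x\Gamma$. The $\Gamma_k$-invariant characters $\chi$ of $G_k$ are exactly the characters of $K$, i.e.\ the elements of the dual $\widehat K$.

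\textbf{Step 2: Fourier projections.} For $f\in C(X)$ and $\chi\in\widehat K$, define
$$f_\chi(x)=\int_K f(u\cdot x)\overline{\chi(u)}\,dm_K(u),$$
where $m_K$ is Haar measure. Uniform continuity of $(u,x)\mapsto f(u\cdot x)$ on the compact space $K\times X$ gives $f_\chi\in C(X)$. Invariance of Haar measure gives $f_\chi(g\cdot x)=\chi(g)f_\chi(x)$ for $g\in G_k$. So each $f_\chi$ is a vertical character with frequency $\chi$, or is identically zero.

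\textbf{Step 3: uniform approximation.} Choose an approximate identity on $K$: Fej\'er-type kernels $\kappa_j$ that are trigonometric polynomials on $K$, i.e.\ finite combinations of characters, with $\kappa_j\ge 0$, $\int\kappa_j=1$, and mass concentrating at the identity. Then
$$\int_K f(u\cdot x)\kappa_j(u^{-1})\,dm_K(u)=\sum_\chi \widehat{\kappa_j}(\chi)\, f_\chi(x)$$
is a finite linear combination of vertical characters. By uniform continuity of the action, it converges uniformly to $f$. This proves density of the span of vertical characters, and hence density of the algebra.

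\textbf{Main obstacle.} The only real care needed is in Step 1, in verifying that $\Gamma_k$ is cocompact in $G_k$ and that the action is well defined and continuous. If $K$ has a finite component group, the Fej\'er kernel argument still works for general compact abelian groups, via Peter--Weyl for abelian $K$: trigonometric polynomials are dense in $C(K)$, so one may take $\kappa_j$ to be trigonometric-polynomial approximations of a continuous approximate identity. This Fej\'er/Peter--Weyl argument avoids the Stone--Weierstrass separation argument altogether.
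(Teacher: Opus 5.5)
Your proof is correct, but it takes a different route from the paper's.

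\textbf{The paper's route.} The paper applies Stone--Weierstrass on $X$ and separates two points by a case split.
\begin{itemize}
\item If the points lie in different $G_k$-orbits, it lifts a separating function from $G/G_k\Gamma$, which is a vertical character of trivial frequency.
\item If $y=ux$ with $u\in G_k\setminus\Gamma_k$, it picks $\chi$ with $\chi(u)\neq 1$. It then takes the $\chi$-projection $\phi'$ of a small bump function at $h\Gamma$ and uses $\phi'(h\Gamma)\neq 0$. This $\phi'$ is exactly your operator $f\mapsto f_\chi$.
\end{itemize}

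\textbf{Your route.} You use all the projections at once. You recover $f$ uniformly as the limit of $\sum_\chi \widehat{\kappa_j}(\chi)f_\chi$ via an approximate identity on $K=G_k/\Gamma_k$. This is the standard isotypic (Fej\'er-type) decomposition for a continuous action of a compact abelian group. The only density theorem you need is on $K$ (trigonometric polynomials are dense in $C(K)$), not on $X$.

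\textbf{What each buys.}
\begin{itemize}
\item Your version needs no case analysis and never uses the quotient $G/G_k\Gamma$. It also needs no control of where the $K$-orbit of $h\Gamma$ meets the support of the bump function. The paper leaves that last step implicit; it relies on freeness of the $K$-action.
\item The paper's version is shorter once Stone--Weierstrass is granted.
\item Your opening reduction to point separation is never used and can be dropped.
\end{itemize}

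\textbf{Two remarks.}
\begin{itemize}
\item Your ``stronger'' conclusion is not actually stronger. A product of vertical characters with frequencies $\chi_1,\chi_2$ is a vertical character with frequency $\chi_1\chi_2$, so the algebra generated by vertical characters equals their linear span. This is also what justifies the reduction ``we may assume $f$ is a vertical character'' in Theorem~\ref{pointwise}.
\item Both arguments rest on the same structural input: $\Gamma_k=\Gamma\cap G_k$ is cocompact in $G_k$. The paper uses this implicitly when it integrates over $G_k/\Gamma_k$.
\end{itemize}
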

\begin{proof}
Let $A$ denote the algebra of all continuous vertical characters. By the Stone-Weierstrass theorem, it suffices to show that $A$ separates points. Let $x,y\in G/\Gamma$ be two distinct elements and write $x=g\Gamma,y=h\Gamma$ for some $g,h\in G$. Suppose first that $gG_k \Gamma\not=hG_k\Gamma$. In that case, there exists a continuous function $f\in C(G/G_k\Gamma)$ so that $f(gG_k \Gamma)\ne f(hG_k\Gamma)$. Lifting $f$ to $G/\Gamma$ we get a vertical character (with trivial frequency) satisfying that $f(g\Gamma)\not=f(h\Gamma)$. Otherwise, if $gG_k\Gamma=hG_k\Gamma$ then there exists some $u\in G_k$ so that $ux=y.$ Since $x$ and $y$ are distinct, $u\not\in \Gamma_k$ and so, from the Peter-Weyl theorem (the characters separate points), we can find a $\Gamma_k$-invariant character $\chi\in \widehat{G_k}$ so that $\chi(u)\neq 1$. Now, let $f\in C(X)$ be a sufficiently small bump function around point $h\Gamma$ and define a new function by setting $$
\phi'(x) \coloneqq \int_{G_k/\Gamma_k} f(tx)\overline{\chi(t)}\,dt.
$$ The continuity of $f$  and $\chi$ guarantees that $\phi'$ is also continuous, and $\phi'(h\Gamma) \neq 0$. Moreover, for any $v\in G_k/\Gamma_k$, we have $$\phi'(vx) = \int_{G_k/\Gamma_k} f(tvx)\overline{\chi(t)}\,dt = \int_{G_k/\Gamma_k} f(tx)\overline{\chi(tv^{-1})}\,dt = \chi(v)\phi'(x).$$
Thus, $\phi'\in A$ and $\phi'(h\Gamma) = \phi'(gu\Gamma)=\chi(u)\phi'(g\Gamma) \neq \phi'(g\Gamma)$, as required.
\end{proof}
\subsection{Formula for pointwise $\IP$ averages}

The Birkhoff pointwise ergodic theorem says that for an ergodic measure preserving system $\mps$ the ergodic averages of integrable $f$ converge to $\int_X fd\mu$ pointwise almost everywhere. Unfortunately, such a result when the averages are taken of a rational $\IP$ is not known. In fact, when the $\IP$ is not rational, the average may not converge at all, and even when it is rational the limit may not be  $\int_X f\,d\mu$. For example, if $\ni$ is a sequence of even numbers, and $X$ is a rotation on $\Z/2\Z$, then the $\IPr$-averages of any function $f$ will be equal to $f(0)$, whereas the integral is $\frac{f(0)+f(1)}{2}$. Our next goal is to show that when the system is an ergodic nilsystem, the distribution of the remainders of the elements of $\IP(\ni)$ modulo the number of connected components, and the integral of $f$ determine the limit. To do so, we introduce the \textit{remainder function} for some $r\in \N$.
\begin{dfn}
Let $r\in \N$, and let $i\in \Z$. Define $\mathbf{1}_{r,i} : \Z/r\Z\rightarrow \C$ by setting:
$$
\mathbf{1}_{r,i}(n)=
\begin{cases}
1, & n \equiv i \pmod{r},\\
0, & \text{otherwise}.
\end{cases}
$$
By an abuse of notation, we also write $\mathbf{1}_{r,i}(n)$ to denote $(\mathbf{1}_{r,i} \circ \pi_s)(n)$, where $\pi_s : \mathbb{Z}/r^s\Z \rightarrow \mathbb{Z}/r\mathbb{Z}$ is the natural projection for any $s\in \N$, same for a natural projection $\pi : \Z\rightarrow \Z/r\Z$.
\end{dfn}

\begin{dfn}
Let $\ni$ be a sequence with rational spectrum and $\Phi = (\Phi_N)_{N\in \N}$ an increasing F\o lner sequence. Define 
$$
\omega_{r,j}(\Phi,\ni) := \lim_{N\rightarrow\infty} \E_{n\in \IP_{\Phi_N}(\ni)} 1_{r,j}(n),
$$
and note that the limit exist by Theorem~\ref{ipmeanthm}, applied to $\Z/r\Z$ with $f = \mathbf{1}_{r,j}$.
\end{dfn}

Finally, we are ready to state and prove our pointwise result.
\begin{thm}[Pointwise limit formula along IPs for ergodic nilmanifolds]\label{pointwise}
Let $X = G/\Gamma$ be a $k$-step ergodic nilmanifold, let $\ni$ be a sequence with rational spectrum, and let $\Phi = (\Phi_N)_{N\in \N}$  be an increasing F\o lner sequence. Let $x\in X$ and let $Y_0,...,Y_{r-1}$ be the connected components of $X$, ordered such that $x\in Y_0$ and $T$ cyclically permutes them. Then for any $f\in C(X)$, we have
\begin{equation}\label{pointwiseeq}
\avgip T^nf(x) = \sum_{j=0}^{r-1}\omega_{r,j}(\Phi, \ni) \cdot\int_{Y_j}f \,d\mu_{Y_j},\end{equation}
where $\mu_{Y_j}$ is the Haar measure on $Y_j$.
\end{thm}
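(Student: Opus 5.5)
The plan is to induct on the nilpotency step $k$, using Proposition~\ref{vechardense} to reduce to vertical characters. Both sides of \eqref{pointwiseeq} are linear in $f$ and bounded by $\|f\|_\infty$ uniformly in $N$. It therefore suffices to verify \eqref{pointwiseeq} on a uniformly dense subset of $C(X)$. By Proposition~\ref{vechardense}, we may take finite linear combinations of products of vertical characters. A product of vertical characters is again a vertical character, with frequency the product of the frequencies. So it suffices to treat a single vertical character $f$ with frequency $\chi$.

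\textbf{Case 1: $\chi$ trivial.} Then $f$ is $G_k$-invariant and descends to the $(k-1)$-step nilmanifold $X'=G/G_k\Gamma$. The rotation on $X'$ is still ergodic. Its connected components are the images of the $Y_j$, with the same cyclic order, because the fibres $G_k/\Gamma_k$ are connected tori (I would check this carefully; one may assume $G_k$ connected in the standard normalization). The integral of $f$ over $Y_j$ equals the integral of the descended function over the image component. Hence \eqref{pointwiseeq} for $f$ follows from the induction hypothesis on $X'$.

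For the base case, consider $k=1$, where $X$ is a compact abelian group rotation. Write $X$ as a union of components $Y_j=a^jY_0$, with $Y_0$ a torus. Expand $f$ uniformly in characters $\psi$ (trigonometric polynomials are dense). For a character $\psi$ with $\psi(a)=\lambda$, the average is $\psi(x)\,\E_{n\in F_N}\lambda^n$. By Theorem~\ref{ipmeanthm} and the rational spectrum assumption, this tends to zero unless $\lambda$ is a root of unity. Ergodicity forces such $\psi$ to be trivial on $Y_0$, hence constant on each $Y_j$ and $r$-periodic along the orbit. For these the average converges to $\sum_j \omega_{r,j}\psi(a^jx)$. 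This matches the right-hand side, since $\int_{Y_j}\psi$ vanishes for $\psi$ nontrivial on $Y_0$ and equals $\psi(a^jx)$ otherwise. Strictly, I would invoke \cite[Theorem 5.3]{krashalom2025}.

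\textbf{Case 2: $\chi$ nontrivial.} The right-hand side vanishes, since $G_k$ (connected) acts on each $Y_j$ preserving Haar measure, and $f(ux)=\chi(u)f(x)$ forces $\int_{Y_j} f=0$. So we must show $\E_{n\in F_N} f(a^nx)\to0$. Here I would use the IP van der Corput lemma (Lemma~\ref{vdc}) in the Hilbert space $\C$, with $x_n=f(a^nx)$. This reduces the claim to showing that the limit of $\E_{n\in F_N^M} f(a^{n+m_1}x)\overline{f(a^{n+m_2}x)}$ is small on average over $m_1,m_2$.

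For fixed $m_1,m_2$, the function $g_{m}(y)=f(a^{m_1}y)\overline{f(a^{m_2}y)}$ is $G_k$-invariant, because the two frequencies cancel. So $g_m$ descends to $X'$, and by the induction hypothesis its pointwise IP average equals $\sum_j\omega_{r,j}\int_{Y_j}g_m$. (The averaging set $F_N^M$ is again the IP of a shifted sequence with rational spectrum, so the hypothesis applies.)

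It then remains to show
\[
\E_{m_1,m_2\in F_M}\sum_j \omega_{r,j}\int_{Y_j} f(a^{m_1}y)\overline{f(a^{m_2}y)}\,d\mu_{Y_j}\to 0 .
\]
This is an $L^2$ statement, namely $\sum_j\omega_{r,j}\|\E_{m\in F_M}T^m f\|^2_{L^2(Y_j)}$. By Theorem~\ref{ipmeanthm}, $\E_{m\in F_M}T^mf$ converges in $L^2$ to a combination of projections onto eigenfunctions with rational eigenvalues, i.e.\ to $E(f\mid K_{\rat})$. Since $f$ has nontrivial vertical frequency, it is orthogonal to the Kronecker factor $Z_1$ when $k\ge2$, and in particular to all eigenfunctions. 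So the limit is $0$.

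I expect the main obstacle to be the bookkeeping in Case 2. The difficulty is justifying the pointwise induction for $g_m$ along $F_N^M$ uniformly enough to combine it with the outer limit in $M$, and checking that the component structure (the number $r$ and the weights $\omega_{r,j}$) is unchanged when passing to $X'$ and to the shifted IP.
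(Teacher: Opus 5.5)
Your proposal is correct and follows essentially the same route as the paper's proof: reduction to vertical characters, induction on the step with the trivial-frequency case descending to $G/G_k\Gamma$, and, for nontrivial frequency, the IP van der Corput lemma in $\mathbb{C}$, the induction hypothesis applied to the $G_k$-invariant function $T^{m}f\cdot T^{m'}\overline{f}$, and Theorem~\ref{ipmeanthm} to show $\|\E_{m\in F_M}T^mf\|_{L^2}\to 0$. The bookkeeping you worry about is harmless: for fixed $M$ the average over $F_M$ is finite, so no uniformity is needed, and the weights along $F_N^M$ are those of $\Phi^M$, which is irrelevant because only $0\le\omega_{r,j}\le 1$ is used. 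One small correction: the limit in Theorem~\ref{ipmeanthm} is $\sum_t\omega_\Phi(t)P_tf$, not $E(f\mid\Krat(X))$; it still vanishes here because $f$ has zero integral on each component and is therefore orthogonal to all rational eigenfunctions.
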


\begin{proof}
Note that since the algebra of vertical characters is dense in $C(X)$ (Proposition \ref{vechardense}), we may assume that $f$ is a vertical character. 

We proceed by inducting on the step $k$; first suppose that $k=1$. In that case, $T$ is a rotation on the compact abelian group $G/\Gamma$, and the vertical characters are eigenfunctions of $T$. In other words, $T^nf(x) = \alpha^nf(x)$, for some $\alpha\in \mathbb{T}$. 

If $\alpha$ is irrational, the integral of $f$ over each connected component is $0$. Indeed, in this case $f|_{Y_0}$ is a nontrivial eigenfunction of $T^r$ and is therefore orthogonal to $1$. On the other hand, from the assumption that $\ni$ has rational spectrum, the left hand side of \eqref{pointwiseeq} must also equal zero by definition.

If $\alpha$ is rational, then we argue that $f$ must be constant on each connected component. Indeed, we get that $f|_{Y_i}$ is $T^{rm}$ invariant for some $m\in \N$ for all $i$, so $f|_{Y_i}$ must be constant (a connected nilsystem is totally ergodic). But then $\int_{Y_i} f\,d\mu_{Y_i} = f|_{Y_i}$ and therefore \eqref{pointwiseeq} holds by definition. 

Let $k\geq 2$ and assume inductively that the theorem was established for all nilsystems of step less than $k$. Let $f$ be a vertical character. If the frequency of $f$ is trivial, then $f$ is measurable with respect to the $(k-1)$-step nilsystem $G/G_k\Gamma$. In that case the induction hypothesis applies and the claim holds. Otherwise, assume that the frequency $f$ is not trivial. In that case, the right hand side of \eqref{pointwiseeq} must be zero, because as Leibman showed in \cite[Lemma 2.10]{leibmanpointwise} (see also \cite[Proposition 2.2]{ackelsbergshalomrichter}), there are choices of $G$ and $\Gamma$ such that $G_i$ is connected for $i\ge 2$. Therefore, we have that for all $u\in G_k,$
$$
\int_{Y_j}f(x)\,d\mu_{Y_j} = \int_{Y_j}f(ux)\,d\mu_{Y_j} = \chi(u)\int_{Y_j}f(x)\,d\mu_{Y_j},
$$
and since $\chi$ is nontrivial we conclude that $\int_{Y_j}fd\mu =0$ for all $j = 0,\dots ,r-1$.\\ It is therefore left to show that the left hand side is also trivial. Choose any $x_0\in X$ and apply the $\IP$ van der Corput lemma (Lemma~\ref{vdc}) with the Hilbert space $\mathcal{H}=\mathbb{C}$ and $x_n = T^nf(x).$ We get 
\begin{equation}\label{pointwise2}
\limsup_{N\to\infty}
\left |
\E_{ n\in F_N}T^nf(x_0)
\right|^2
\leq
\limsup_{M\to\infty}
\E_{ m, m'\in F_M}
\limsup_{N\to\infty}
\E_{ n\in F_N^M}
 T^{n+m}f(x_0)T^{n+m'}\overline{f(x_0)}.
\end{equation}
Observe that $T^{m}f(x)T^{m'}\overline{f(x)}$ is $G_k$-invariant. Indeed, if $h\in G_k$, then $$T^{m}f(hx)T^{m'}\overline{f(hx)} = \chi(h)\overline{\chi(h)}T^{m}f(x)T^{m'}\overline{f(x)} = T^{m}f(x)T^{m'}\overline{f(x)}$$ (since $h$ commutes with $T$). Hence, $T^{m}f(x)T^{m'}\overline{f(x)}$ is measurable with respect to the $(k-1)$-step nilsystem $G/G_k\Gamma$. Applying the induction hypothesis with $T^mf\cdot T^{m'}\overline{f}$ at $x_0$, we deduce that right hand side of \eqref{pointwise2} equals to 
$$
\begin{aligned}
& \limsup_{M\to\infty}
\E_{ m, m'\in F_M} 
\sum_{j=0}^{r-1}\omega_{r,j}(\Phi,\ni)\int_{Y_j}
 T^{m}f(x)T^{m'}\overline{f(x)}d\mu_{Y_j} \\ &\qquad
= \limsup_{M\to\infty} 
\sum_{j=0}^{r-1}\omega_{r,j}(\Phi,\ni)\int_{Y_j}\left|\E_{ m\in F_M} T^{m}f\right|^2\,d\mu_{Y_j}
\\ &\qquad
\le
\limsup_{M\to\infty} r\int_X\left|\E_{ m\in F_M} T^{m}f\right|^2\,d\mu _{X} \\&\qquad =\limsup_{M\rightarrow\infty} r\left \|\E_{ m\in F_M} T^{m}f\right\|^2_{L^2(X)} = 0,
\end{aligned}
$$
where the inequality follows from the fact that $|\omega_{r,j}|\le 1$, and the last equality follows from Theorem~\ref{ipmeanthm} and the fact that $f$ has zero integral over each $Y_i$, $i = 0,\dots,r-1$, which means that it is orthogonal to $K_{\rat}(X)$.

\end{proof}

\section{$\IP$ distribution of polynomial sequences in nilmanifolds}
In \cite{leibmanpointwise} Leibman proved an equidistribution result for certain polynomial sequences in nilmanifolds. Our goal in this section is to derive a counterpart where the averages are taken along an $\IP$ with rational spectrum.  We begin with some preliminary definitions.
\begin{dfn}[Polynomial sequences]
Let $G=(G,\cdot)$ be a group. A \textit{polynomial sequence} in $G$ is a sequence of the form $g(n)= a_1^{p_1(n)}a_2^{p_2(n)}\cdots a_r^{p_r(n)}$ where $a_1,\dots,a_r\in G$, $p_1,\dots, p_r : \Z\rightarrow \Z$ are integer polynomials.
\end{dfn}
In the next definition we adapt the notion of a well-distributed sequence to the rational $\IP$ setting.
\begin{dfn}
Let $X = G/\Gamma$ be a nilmanifold and let $x\in X$. A sequence $\{g_nx\}_{n\in \N}$ of elements of $X$ is called \textit{$\IPr$-well-distributed} on $X$ if for any $f\in C(X)$, any sequence $\ni$ with rational spectrum and any increasing F\o lner sequence $\Phi = (\Phi_N)_{N\in \N}$ we have that
$$ 
\avgip f(g_nx) = \int_X f\,d\mu 
$$
\end{dfn}

\subsection{Equidistribution of an $\IP$ ergodic averages.}
Using a well known equidistribution result for nilmanifolds to extend the limit formula to non-ergodic nilsystems.
%We begin by extending the limit formula to nilsystems that are not necessarily ergodic.

\begin{prop}\label{linearfromleibman}
Let $X= G/\Gamma$ be a nilmanifold. For  every  $x\in X$ and $a \in G$ there exists a closed subgroup $E \le G$ such that $\overline{\{a^nx:n\in \mathbb{Z}\}} = Ex$ is a sub-nilmanifold of $X$. In addition, we have that for any $f\in C(X)$, any sequence $\ni$ with rational spectrum and any increasing F\o lner sequence $\Phi = \fol$,
$$
\avgip f(R_a^nx) = \sum_{j=0}^{r-1}\omega_{r,j}(\Phi,\ni)\int_{(Ex)_j}f\,d\mu_{(Ex)_j},
$$
where $(Ex)_j$, $j \in 0, \dots ,r-1$ are connected components of $Ex$, ordered such that $x\in (Ex)_0$ and $R_a$ cyclically permutes them.
\end{prop}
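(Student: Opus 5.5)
The plan is to reduce Proposition~\ref{linearfromleibman} to Theorem~\ref{pointwise} by passing from $X$ to the orbit closure of $x$ under $a$. The first step is the classical structure theorem for orbit closures of translations on nilmanifolds, due to Leibman \cite{leibmanpointwise} (and Lesigne, Ratner, Shah in various forms). It gives a closed subgroup $E\le G$ containing $a$ such that $Y:=\overline{\{a^nx:n\in\Z\}}=Ex$ is a sub-nilmanifold. Writing $x=g\Gamma$, we have $Ex\cong E/(E\cap g\Gamma g^{-1})$, with $E\cap g\Gamma g^{-1}$ cocompact in $E$, and the translation $R_a$ restricted to $Ex$ is ergodic (indeed minimal and uniquely ergodic) with respect to the Haar measure $\mu_{Ex}$. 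I will quote this as known rather than reprove it; the only care needed is that $E$ is a nilpotent Lie group, being a closed subgroup of $G$, so $Ex$ is itself a nilmanifold of step at most $k$.

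Next, I restrict $f\in C(X)$ to $Y$, obtaining $f|_Y\in C(Y)$. Since $a^nx\in Y$ for all $n$, we have $f(R_a^n x)=f|_Y(R_a^n x)$ for every $n$. So the averages along $\IP_{\Phi_N}(\ni)$ only see the ergodic nilsystem $(Y,\mu_Y,R_a)$. Applying Theorem~\ref{pointwise} to this system at the point $x\in Y$ then gives
\[
\avgip f(R_a^nx)=\sum_{j=0}^{r-1}\omega_{r,j}(\Phi,\ni)\int_{(Ex)_j}f\,d\mu_{(Ex)_j},
\]
where $r$ is the number of connected components of $Ex$. These components are ordered with $x\in(Ex)_0$ and are permuted cyclically by $R_a$, as holds for any ergodic translation on a nilmanifold with finitely many components.

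The main obstacle is verifying that Theorem~\ref{pointwise} genuinely applies to $Y$ in the form stated. That theorem uses, in its induction, a presentation where $G_i$ is connected for $i\ge2$ (via \cite[Lemma 2.10]{leibmanpointwise}), together with the density of vertical characters. Both must hold for the presentation $E/(E\cap g\Gamma g^{-1})$. I would handle this by first rechoosing the presentation of the ergodic nilsystem $Y$ using the same Leibman lemma, which is legitimate because Theorem~\ref{pointwise} is stated for arbitrary ergodic nilmanifolds. One also has to check that the components $(Ex)_j$, their Haar measures, and the cyclic ordering do not depend on the chosen presentation. This holds because they are intrinsic to the topological system $(Y,R_a)$, and the Haar measure on each component is the unique $R_a^r$-invariant measure on it. This completes the reduction.
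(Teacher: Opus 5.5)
Your proposal is correct and follows essentially the same route as the paper: quote Leibman's orbit-closure theorem to get $\overline{\{a^nx:n\in\Z\}}=Ex$ with $a\in E$, restrict $f$ to the ergodic nilsystem $(Ex,R_a)$, and apply Theorem~\ref{pointwise} at the point $x$. The paper leaves two points implicit that you make explicit: re-choosing the presentation of $Ex$ so the higher commutator subgroups are connected, and noting that the components and their Haar measures are intrinsic to $(Ex,R_a)$.
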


\begin{proof}
The existence of a closed $E\le G$ such that $\overline{\{a^nx:n\in \mathbb{Z}\}} = Ex$ was shown in \cite{leibmanpointwise}.

Extending $E$ if necessary we may assume that $a \in E$.

Fix $f \in C(X)$, a sequence $(n_i)_{i\in\N}$ with rational spectrum, and an increasing
Følner sequence $\Phi = (\Phi_N)_{N\in\N}$. The restriction of $f$ to $Ex$ is continuous, so we
may apply Theorem \ref{pointwise} to the ergodic nilsystem $(Ex,R_a)$. We deduce that
$$
\lim_{N\to\infty}
\E_{n\in \IP_{\Phi_N}((n_i)_{i\in\N})} f(R_a^n x)
=
\sum_{j=0}^{r-1}
\omega_{r,j}(\Phi,(n_i)_{i\in\N})
\int_{(Ex)_j} f \, d\mu_{(Ex)_j},
$$
as required.

\end{proof}

\subsection{Equdistribution of orbits of a unipotent automorphism}
Leibman (\cite{leibmanpointwise}) observed that every polynomial sequence on $G$ is associated with a unipotent automorphism (see Proposition \ref{polyred} below). Hence, we begin by proving the limit formula for unipotent automorphism (see Lemma \ref{unipotnetlemma}).

\begin{dfn}
An automorphism $\tau$ of $G$ is called \textit{unipotent} if the mapping $\xi : G \rightarrow G$ defined by $\xi (g) = \tau(g)g^{-1}$, satisfies $\xi^{\circ q} \equiv e_G$, for some $q \in \N$.
\end{dfn}

Given some nilmanifold $X= G/\Gamma$ and a unipotent automorphism $\tau$ the following construction allows us to think of $\tau$ as a translation by some element in $G$. 
\begin{dfn}
Let $X = G/\Gamma$ be a nilmanifold and let $\tau$ be a unipotent automorphism of $G$ such that $\tau(\Gamma)= \Gamma$. Define an \textit{extension} of $G$ by $\tau$ to be $\widehat{G} = G \rtimes \Z$, with product defined $(g,n)\cdot (h,m) = (g \tau^n(h),n+m)$. Identify $\tau$ with $(e_G,1)$ and $G$ with the subgroup $G\times \{0\}\leq \widehat{G}$ and let $\widehat{\Gamma} = \langle \Gamma,\tau \rangle$.
\end{dfn}

\begin{remark}
It is standard result (see \cite{leibmanpointwise}) that $\widehat{G}$ is a nilpotent Lie group and $\widehat{\Gamma}$ is its closed co-compact subgroup. In particular, $\widehat{G}/ \widehat{\Gamma}$ is a nilmanifold, which is isomorphic to $X$.
\end{remark}

We are set to prove the equidistribution result for unipotent automorphisms.
\begin{lemma}[Equidistribution for unipotent automorphisms]\label{unipotnetlemma}
Let $X=G/\Gamma$ be a nilmanifold, let $\tau$ a unipotent automorphism of $G$, and let $x\in X$. Then there is a connected closed subgroup $H \le G$ and points $x_0,...,x_{r-1}\in X$, such that $\overline{\{\tau^{n}x\}}_{n\in \Z} = \cup_jHx_j$, $x\in Hx_0$ and $\tau$ cyclically permutes $Hx_j$. Additionally, for every $f\in C(X)$, every sequence $\ni$ with rational spectrum, and every increasing F\o lner sequence $\Phi = \fol$, we have that 
$$
\avgip f(\tau^nx) = \sum_{j=0}^{r-1}\omega_{r,j}(\Phi,\ni)\int_{Hx_j}f\,d\mu_{Hx_j}.
$$
where $\mu_{Hx_j}$ is a Haar measure on $Hx_j$.
\end{lemma}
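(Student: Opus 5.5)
We reduce the unipotent case to the translation case of Proposition~\ref{linearfromleibman} by passing to the extension $\widehat{G}=G\rtimes\Z$.

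First, using the extension construction and the accompanying remark, we regard $\tau$ as the element $(e_G,1)\in\widehat{G}$. We identify $X=G/\Gamma$ with the nilmanifold $\widehat{X}=\widehat{G}/\widehat{\Gamma}$ via $g\Gamma\mapsto g\widehat{\Gamma}$. Under this identification the action of $\tau$ on $X$ becomes the left translation $R_\tau$ on $\widehat{X}$. Indeed, for $x=g\Gamma$ we have
$$\tau\cdot g\widehat{\Gamma}=(\tau(g),1)\widehat{\Gamma}=\tau(g)\tau\widehat{\Gamma}=\tau(g)\widehat{\Gamma},$$
since $\tau\in\widehat{\Gamma}$. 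Hence $\tau^n x$ corresponds to $R_\tau^n\hat x$ for every $n$, and continuous functions on $X$ correspond to continuous functions on $\widehat{X}$.

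Next we apply Proposition~\ref{linearfromleibman} to $\widehat{X}$ with $a=\tau$ and the point $\hat x$. This gives a closed subgroup $E\le\widehat{G}$ with $\overline{\{\tau^n\hat x\}}=E\hat x$. It also gives the limit formula
$$\avgip f(\tau^n x)=\sum_{j=0}^{r-1}\omega_{r,j}(\Phi,\ni)\int_{(E\hat x)_j}f\,d\mu_{(E\hat x)_j},$$
where $(E\hat x)_j$ are the connected components of $E\hat x$, cyclically permuted by $\tau$ with $\hat x\in(E\hat x)_0$.

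It remains to describe these components in the form $Hx_j$ with $H\le G$ connected and closed. We take $H$ to be the identity component $E^\circ$ of $E$. Since the projection $\widehat{G}\to\Z$ is continuous and $\Z$ is discrete, $E^\circ\le G\times\{0\}=G$. Each component of the sub-nilmanifold $E\hat x$ is an orbit $E^\circ y$. Choosing $x_j\in(E\hat x)_j$ with $x_0=x$ and $x_j=\tau^j x$, we get $(E\hat x)_j=Hx_j$, and $\tau$ permutes the $Hx_j$ cyclically. The Haar measure on the component $(E\hat x)_j$ is exactly the $H$-invariant measure $\mu_{Hx_j}$, so the formula above becomes the one in the statement.

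The main obstacle is this last step. We need $E\hat x$ to have only finitely many components, each a single $E^\circ$-orbit, and these must be exactly the $r$ cyclically permuted pieces appearing in Theorem~\ref{pointwise}. The plan is to use that $E\hat x\cong E/(E\cap g\widehat{\Gamma}g^{-1})$ is a compact nilmanifold: its component group $E/E^\circ(E\cap g\widehat{\Gamma} g^{-1})$ is then finite, and it is cyclic because the orbit of $\tau$ is dense in $E\hat x$. This is standard in Leibman's setting, and we would cite \cite{leibmanpointwise} for the structure of $E$ rather than reprove it.
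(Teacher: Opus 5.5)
Your proposal is correct and is essentially the paper's proof: pass to $\widehat G=G\rtimes\Z$, apply Proposition~\ref{linearfromleibman} to translation by $\tau$, take $H=E^\circ$ (which lies in $G$ because $\Z$ is discrete), and identify the components of $E\hat x$ with $H$-orbits. Your choice $x_j=\tau^j x$ is the paper's $x_j=b_jx$ with $b_j=\tau^j\in E$. Your justification of the component structure fills in what the paper only asserts: there are finitely many open $E^\circ$-orbits, and $\tau$ permutes them transitively, hence as a single cycle. One small correction: $E/E^\circ(E\cap g\widehat{\Gamma}g^{-1})$ is in general only a coset space, not a group, so ``cyclic'' should be read as ``$\tau$ acts on it as one cycle''; this does not affect the argument.
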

\begin{proof}

Apply Proposition \ref{linearfromleibman} to the nilrotation $L_{\tau}$ on $X = \widehat G/\widehat \Gamma$. We obtain a
closed subgroup $E\leq \widehat G$, which we may take to contain $\tau$, such that
$$
\overline{\{\tau^n x:n\in\Z\}}=E x,
$$
and $E x$ is a sub-nilmanifold of $ X$. Let $
H:=E^o
$ denote be the connected component of the identity in $E$. Since $\widehat{G}$ is an extension of $G$ by a discrete group we must have $$
H\leq G \leq \widehat G.
$$
We have that $Hx$ is a connected component of $Ex$,
and therefore the connected components $Y_0,\dots,Y_{r-1}$ of $Ex$ must be translates of $Hx$ by some $b_0,\dots, b_{r-1}\in E$. Since $H$ is normal we have $Y_j = b_jHx = Hb_jx$. In particular, we may take $x_j = b_jx$. Now, the limit formula can be obtained just by applying Theorem \ref{pointwise} to an ergodic nilsystem $(Ex,\tau)$.
\end{proof}

\subsection{Reduction to unipotent automorphisms.}
It \cite[Proposition 3.14]{leibmanpointwise}, Leibman showed that any polynomial sequence $g$ can be realized as unipotent transformation on an extension.

\begin{prop}[polynomial sequences as unipotent automorphisms]\label{polyred}
 Let $G$ be a nilpotent Lie group and let $g(n)$ be a polynomial sequence in $G$. There exists a nilpotent Lie group $\widetilde{G}$ with a discrete co-compact subgroup $\widetilde{\Gamma}$, an epimorphism $\eta \colon \widetilde{G} \to G$ with $\eta(\widetilde{\Gamma}) \subseteq \Gamma$, a unipotent automorphism $\tau$ of $\widetilde{G}$ with $\tau(\widetilde{\Gamma}) = \widetilde{\Gamma}$, and an element $c \in \widetilde{G}$ such that
$$
g(n)=\eta\bigl(\tau^n(c)\bigr), \quad \text{for all } n\in \Z.
$$
\end{prop}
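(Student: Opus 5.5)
We will follow Leibman's construction in \cite[Proposition 3.14]{leibmanpointwise}. The idea is to encode the polynomial sequence as the orbit of a single point under a shift-type automorphism on a large nilpotent group of ``polynomial-valued'' data. We first reduce to a convenient form. Writing each $p_i$ in the binomial basis $\binom{n}{j}$ and noting that $a^{m\binom{n}{j}}=(a^m)^{\binom{n}{j}}$, we may rewrite
\[
g(n)=\prod_{l=1}^{L} b_l^{\binom{n}{j_l}},\qquad b_l\in G,\ j_l\ge 0 .
\]
(We allow an abstract lattice $\Gamma$ in $G$; if $G$ itself carries none, we replace $G$ by the nilpotent Lie group generated by the $a_i$, which suffices for the application in Theorem~\ref{bfromleibman}.)

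\textbf{Step 1: the ambient group.} Let $d=\max_l j_l$ and consider the product group
\[
\widetilde G:=\prod_{l=1}^{L}G^{\,j_l+1}.
\]
Heuristically, the $l$-th block stores the values $\bigl(b_l^{\binom{n}{0}},\dots,b_l^{\binom{n}{j_l}}\bigr)$. We equip $\widetilde G$ with the lattice $\widetilde\Gamma=\prod\Gamma^{j_l+1}$, which is discrete and cocompact. The map $\tau$ is defined blockwise by
\[
\tau(h_0,h_1,\dots,h_j)=(h_0,\;h_1h_0,\;h_2h_1,\;\dots,\;h_jh_{j-1}).
\]
This mirrors Pascal's identity $\binom{n+1}{i}=\binom{n}{i}+\binom{n}{i-1}$, and in an abelian group we would have $\tau^n(h_0,\ldots)=\bigl(\prod_i h_i^{\binom{n}{\cdot-i}}\bigr)$.

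\textbf{Step 2: making $\tau$ a unipotent automorphism.} This is the main obstacle. In the nonabelian group $G^{j+1}$, the coordinatewise formula above is \emph{not} a homomorphism. Leibman's remedy is to replace $G^{j+1}$ by a group in which the relevant commutators are built in, namely a free-nilpotent-type group on the generators $b_l$ pushed forward to $G$. Concretely, we take $\widetilde G$ to be the free nilpotent Lie group of step $s\cdot d$ (where $s$ is the step of $G$) on the formal generators $\beta_{l,i}$ for $0\le i\le j_l$. On these generators we define
\[
\tau(\beta_{l,i})=\beta_{l,i}\,\beta_{l,i-1}\quad(i\ge1),\qquad \tau(\beta_{l,0})=\beta_{l,0}.
\]
By freeness this extends to an automorphism, and since the free group is defined over $\mathbb Q$, $\tau$ preserves the integer lattice $\widetilde\Gamma$ generated by the $\beta$'s. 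Unipotence holds because $\xi(g)=\tau(g)g^{-1}$ strictly lowers a filtration: it decreases the index $i$ on generators and raises the commutator degree. Hence $\xi^{\circ q}=e$ for $q$ large, in terms of $d$ and the nilpotency step.

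\textbf{Step 3: identifying $g(n)$.} Let $\eta\colon\widetilde G\to G$ be the epimorphism (onto the group generated by the $b_l$) determined by
\[
\eta(\beta_{l,j_l})=b_l,\qquad \eta(\beta_{l,i})=e\ \text{ for } i<j_l.
\]
Since lattice generators map into $\Gamma$ after rescaling the $b_l$ by a common denominator if needed, we get $\eta(\widetilde\Gamma)\subseteq\Gamma$. We then set
\[
c=\prod_l \beta_{l,0}.
\]
By induction on $n$, using Pascal's rule, $\tau^n(\beta_{l,0})$ equals $\beta_{l,j_l}^{\binom{n}{j_l}}$ times factors lying in $\ker\eta$, after reordering modulo commutators that $\eta$ kills. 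A cleaner way to secure this is to take $\eta$ to be the map sending $\beta_{l,i}\mapsto b_l^{(\cdot)}$ compatibly, so that the quotient is tracked exactly. This yields
\[
\eta(\tau^n c)=\prod_l b_l^{\binom{n}{j_l}}=g(n)\quad\text{for all } n\in\Z.
\]
The place where this is most likely to fail is the bookkeeping needed to ensure that $\eta$ is a homomorphism while $\eta\circ\tau^n$ reproduces the ordered product exactly. If the direct computation gets stuck, we would fall back on Leibman's argument verbatim and simply cite it.
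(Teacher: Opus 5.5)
The paper gives no proof of this proposition; it simply quotes \cite[Proposition 3.14]{leibmanpointwise}. Your reconstruction has two gaps. The first is an outright error in Step 3. Since $\tau(\beta_{l,0})=\beta_{l,0}$, your point $c=\prod_l\beta_{l,0}$ is $\tau$-fixed, so $\eta(\tau^n c)=\eta(c)$ is constant. The claim that $\tau^n(\beta_{l,0})$ equals $\beta_{l,j_l}^{\binom{n}{j_l}}$ modulo $\ker\eta$ is false: moving the Pascal recursion of Step 1 from coordinates to generators transposes it and reverses its direction. This is repairable, and the bookkeeping you worry about disappears. Take $c=\beta_{1,j_1}\cdots\beta_{L,j_L}$, $\eta(\beta_{l,0})=b_l$, and $\eta(\beta_{l,i})=e$ for $i\ge 1$. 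The subgroup generated by $\beta_{l,0},\dots,\beta_{l,j_l}$ is $\tau^{\pm 1}$-invariant, and $\eta$ maps it into the abelian group $\langle b_l\rangle$. So the homomorphisms $\phi_n=\eta\circ\tau^n$ restricted to it satisfy $\phi_{n+1}(\beta_{l,i})=\phi_n(\beta_{l,i})\,\phi_n(\beta_{l,i-1})$ for $i\ge 1$, giving $\phi_n(\beta_{l,i})=b_l^{\binom{n}{i}}$. Since $\eta\circ\tau^n$ is a homomorphism, $\eta(\tau^n c)=\prod_l b_l^{\binom{n}{j_l}}=g(n)$.

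The second gap is the real one: nothing in your construction ties $\widetilde\Gamma$ to $\Gamma$. Here $\eta(\widetilde G)$ is only the subgroup generated by one-parameter subgroups through the $b_l$. ``Rescaling the $b_l$ by a common denominator'' is meaningless, because the $b_l$ are dictated by $g$. In fact no choice of $\widetilde\Gamma$ can work. Take $G=\mathbb{R}^2$, $\Gamma=\Z^2$ and $g(n)=(n^2\alpha,n^2\beta)$ with $\beta\neq 0$ and $\alpha/\beta\notin\Q$. All $b_l$ lie on the line $\ell=\mathbb{R}(\alpha,\beta)$, so $\eta(\widetilde G)=\ell$ and $\ell\cap\Z^2=\{0\}$. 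Then $\eta(\widetilde\Gamma)\subseteq\Gamma$ forces $\eta(\widetilde\Gamma)=\{0\}$, and if $\widetilde G=K\widetilde\Gamma$ with $K$ compact, $\ell=\eta(K)$ would be compact. So $\eta$ is neither onto nor compatible with any cocompact $\widetilde\Gamma$. Replacing $G$ by the group generated by the $a_i$ does not rescue the application, since Theorem~\ref{bfromleibman} needs $\eta$ to induce a map $\widetilde G/\widetilde\Gamma\to G/\Gamma$. Likewise, if some $b_l\notin G^o$ (as can happen in Theorem~\ref{ratkronisfact}, where $a$ permutes the components of $X$), no homomorphism from a connected free nilpotent Lie group can reach $b_l$ at all.

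The missing idea is to build the lattice out of $\Gamma$ and let $c$ carry $g$. Let $\widetilde G$ be the group, under pointwise multiplication, of polynomial maps $h\colon\Z\to G$ of bounded degree with respect to a suitable rational filtration of $G$; it is a closed subgroup of $G^{D+1}$ via $h\mapsto(h(0),\dots,h(D))$. Let $\widetilde\Gamma$ be its $\Gamma$-valued elements, $\tau h=h(\cdot+1)$, $\eta(h)=h(0)$ and $c=g$. Then $g(n)=\eta(\tau^n c)$, $\eta(\widetilde\Gamma)\subseteq\Gamma$, $\tau(\widetilde\Gamma)=\widetilde\Gamma$ and surjectivity of $\eta$ are immediate. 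Unipotence holds because $\xi(h)(n)=h(n+1)h(n)^{-1}$ is a discrete derivative. The actual work is showing that these maps form a group (Lazard--Leibman) and that $\widetilde\Gamma$ is cocompact, via Taylor coefficients in a rational filtration.
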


Using this proposition we are now set to compute the limit formula for a polynomial sequences.

\begin{thm}[Limit formula for polynomial sequence]\label{bfromleibman}
Let $X=G/\Gamma$ be a nilmanifold, let $g(n)$ be a polynomial sequence in $G$ and let $x\in X$. Then there exists a closed connected subgroup $H\le G$, and points $x_0,...,x_{r-1}\in X$, not necessarily distinct, such that $x\in Hx_0$, $g(n)x$ cyclically visits $Hx_j$, $\overline{\{g(n)x\}}_{n\in \Z}= \bigcup_{i=0}^{r-1}Hx_i$, and for all $f\in C(X)$, all $\ni$ with rational spectrum and all increasing F\o lner sequences $\Phi = (\Phi_N)_{N\in \N}$, we have 
$$
\avgip f(g(n)x) = \sum_{j=0}^{r-1}\omega_{r,j}(\Phi,\ni)  \int_{Hx_j}f\,d\mu_{Hx_j},
$$
where $\mu_{Hx_j}$ is a Haar measure on $Hx_j$.
\end{thm}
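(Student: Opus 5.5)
We reduce Theorem~\ref{bfromleibman} to Lemma~\ref{unipotnetlemma} via Proposition~\ref{polyred}, following Leibman's argument in \cite{leibmanpointwise}.

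First apply Proposition~\ref{polyred} to $g(n)$. This gives $\widetilde G,\widetilde\Gamma$, the epimorphism $\eta$, the unipotent automorphism $\tau$ of $\widetilde G$ preserving $\widetilde\Gamma$, and $c\in\widetilde G$ with $g(n)=\eta(\tau^n(c))$. Write $x=a\Gamma$ with $a\in G$ and choose $\tilde a\in\widetilde G$ with $\eta(\tilde a)=a$. It is cleanest to also absorb $a$ into the construction: apply Proposition~\ref{polyred} to the polynomial sequence $g(n)a$ rather than to $g(n)$, so that $g(n)x=\eta(\tau^n(c))\Gamma$.

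Since $\eta(\widetilde\Gamma)\subseteq\Gamma$, the map $\tilde\eta\colon\widetilde X=\widetilde G/\widetilde\Gamma\to X$ given by $\tilde\eta(h\widetilde\Gamma)=\eta(h)\Gamma$ is well defined and continuous. Because $\tau(\widetilde\Gamma)=\widetilde\Gamma$, $\tau$ acts on $\widetilde X$. With $\tilde x:=c\widetilde\Gamma$ we get $g(n)x=\tilde\eta(\tau^n\tilde x)$.

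For $f\in C(X)$, the function $\tilde f:=f\circ\tilde\eta$ is in $C(\widetilde X)$. Lemma~\ref{unipotnetlemma}, applied to $\tau$, $\tilde x$ and $\tilde f$, gives a connected closed subgroup $\widetilde H\le\widetilde G$ and points $\tilde x_0,\dots,\tilde x_{r-1}$ such that $\overline{\{\tau^n\tilde x\}}=\bigcup_j\widetilde H\tilde x_j$, $\tau$ cyclically permutes the pieces, and
\[
\avgip f(g(n)x)=\avgip\tilde f(\tau^n\tilde x)=\sum_{j=0}^{r-1}\omega_{r,j}(\Phi,\ni)\int_{\widetilde H\tilde x_j}f\circ\tilde\eta\,d\mu_{\widetilde H\tilde x_j}.
\]

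Now set $H:=\eta(\widetilde H)$ and $x_j:=\tilde\eta(\tilde x_j)$. Then $H$ is connected as the continuous image of a connected group. It is closed: $\eta$ is a surjective Lie group homomorphism, hence open, so $G\cong\widetilde G/\ker\eta$. The image $\widetilde H\ker\eta$ is closed because $\widetilde H\tilde x_j$ is a closed sub-nilmanifold; alternatively one can replace $H$ by the closure and check that nothing changes.

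Equivariance gives $\tilde\eta(\widetilde H\tilde x_j)=Hx_j$, and $x\in Hx_0$. Continuity and compactness then give
\[
\overline{\{g(n)x\}}=\tilde\eta\bigl(\overline{\{\tau^n\tilde x\}}\bigr)=\bigcup_j Hx_j.
\]
The cyclic visiting passes to the image. The points $x_j$ need not be distinct, since distinct pieces upstairs may map to the same piece downstairs; this is why the statement says they are not necessarily distinct.

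The main obstacle is identifying the pushforward measure, $\tilde\eta_*\mu_{\widetilde H\tilde x_j}=\mu_{Hx_j}$. I would argue as follows. The measure $\mu_{\widetilde H\tilde x_j}$ is $\widetilde H$-invariant. By the intertwining $\tilde\eta(hy)=\eta(h)\tilde\eta(y)$, its pushforward is an $H$-invariant probability measure on the compact homogeneous sub-nilmanifold $Hx_j$. The $H$-invariant probability measure on $Hx_j=H/\operatorname{Stab}_H(x_j)$ is unique, namely the Haar measure, so the pushforward is $\mu_{Hx_j}$. This uses that $Hx_j$ is a sub-nilmanifold, i.e. that $\operatorname{Stab}_H(x_j)$ is cocompact in $H$; that follows since $Hx_j$ is the compact image of $\widetilde H\tilde x_j$. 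Substituting $\int_{\widetilde H\tilde x_j}f\circ\tilde\eta\,d\mu=\int_{Hx_j}f\,d\mu_{Hx_j}$ into the display gives the claimed formula.
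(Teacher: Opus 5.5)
Your proposal is correct and is essentially the paper's own argument. You realize $g(n)x$ as $\tilde\eta(\tau^n\tilde x)$ via Proposition~\ref{polyred}, apply Lemma~\ref{unipotnetlemma} on $\widetilde G/\widetilde\Gamma$, and push $\widetilde H$, the points $\tilde x_j$ and the Haar measures down through $\eta$. Your justification that $H$ is closed is terse as written; your fallback of passing to $\overline{H}$ is the clean version, and it works because $\overline{H}x_j = Hx_j$ when $Hx_j$ is compact. Together with identifying $\tilde\eta_*\mu_{\widetilde H\tilde x_j}$ by uniqueness of $H$-invariant measures, this fills in steps the paper only asserts.

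One small caveat, shared with the paper and the statement itself: $\tilde\eta(\tilde x)=\eta(c)\Gamma=g(0)x$. So what the argument actually gives is $g(0)x\in Hx_0$, which is the claimed $x\in Hx_0$ only after normalizing $g(0)=e_G$.
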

\begin{proof}
By changing the polynomial sequence by a constant, we may assume that $x = \pi(e_G)$. Take $\widetilde{G}, \widetilde{\Gamma}$ and $c$ as in Proposition~\ref{polyred}. The epimorphism $\eta$ factors to $\eta :\widetilde{G}/\widetilde{\Gamma}\rightarrow X$. By Lemma \ref{unipotnetlemma} there is $\widetilde{H}\le \widetilde{G}$ and $\widetilde{x}_0,\dots, \widetilde{x}_{r-1}\in \widetilde{X}$ such that the corresponding conditions are satisfied. Take $H = \eta (\widetilde{H})$ and $x_j = \eta(\widetilde{x_j})$, $j=0,\dots,r-1$. We have that each $Hx_j$ is a connected sub-nilmanifold of $X$, and Haar measure on $Hx_j$ is the push-forward of the Haar measure on $\widetilde{H}\widetilde{x}_j$ by $\eta$. So

$$
\begin{aligned}
&\avgip f(g(n)x) = 
\avgip f(\eta(\tau^n(c))) =\\ & \qquad 
= \sum_{j=0}^{r-1}\omega_{r,j}(\Phi,\ni)\int_{\widetilde{H}\widetilde{x}_j}f\circ \eta \,d\mu_{\widetilde{H}\widetilde{x}_j} = \sum_{j=0}^{r-1}\omega_{r,j}(\Phi,\ni)\int_{Hx_j}f\,d\mu_{Hx_j},
\end{aligned}
$$
where $\mu_{\widetilde{H}\widetilde{x}_j}$ is the Haar measure on the nilmanifold $\widetilde{H}\widetilde{x}_j$.
\end{proof}

\begin{cor}\label{afromleibman}
Let $X = G/\Gamma$ be a nilmanifold and $g(n)$ a polynomial sequence in $G$, then for all $f\in C(X)$, all $\ni$ with rational spectrum and all increasing F\o lner sequences $\Phi = (\Phi_N)_{N\in \N}$,
$\avgip f(g(n)x)$
exists for all $x\in X$.
\end{cor}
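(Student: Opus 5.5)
The corollary follows directly from Theorem~\ref{bfromleibman}; no new argument is needed. Fix $x\in X$, a polynomial sequence $g(n)$ in $G$, a function $f\in C(X)$, a sequence $\ni$ with rational spectrum, and an increasing F\o lner sequence $\Phi$. Apply Theorem~\ref{bfromleibman} to the triple $(X,g,x)$. It produces a closed connected subgroup $H\le G$ and points $x_0,\dots,x_{r-1}$. None of these depend on $f$, $\ni$ or $\Phi$. The theorem then asserts the identity
$$
\avgip f(g(n)x) = \sum_{j=0}^{r-1}\omega_{r,j}(\Phi,\ni)\int_{Hx_j}f\,d\mu_{Hx_j}.
$$

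We check that the right-hand side is a well-defined finite complex number. Each $\omega_{r,j}(\Phi,\ni)$ exists because the limit defining it is guaranteed by Theorem~\ref{ipmeanthm}, applied to the rotation on $\Z/r\Z$ with the function $\mathbf{1}_{r,j}$. Each integral $\int_{Hx_j}f\,d\mu_{Hx_j}$ is finite because $f$ is continuous, hence bounded, and $\mu_{Hx_j}$ is a probability measure on the compact sub-nilmanifold $Hx_j$.

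Theorem~\ref{bfromleibman} is stated as an equality involving the limit itself, so it already includes the assertion that this limit exists and equals the given expression. Since $x$ was arbitrary, the limit exists for every $x\in X$, not merely almost every $x$. The reason is that Theorem~\ref{pointwise} and its consequences Proposition~\ref{linearfromleibman} and Lemma~\ref{unipotnetlemma} hold at every point of the orbit closure.

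The only point to watch is the normalization in Theorem~\ref{bfromleibman}: its proof first shifts the polynomial sequence by a constant so that $x=\pi(e_G)$. That shift turns $g(n)$ into $g(n)g_0$ with $x=g_0\Gamma$, which is again a polynomial sequence in $G$, so Proposition~\ref{polyred} still applies. With that checked, there is no real obstacle, and the corollary is immediate.
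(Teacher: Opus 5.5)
Your proposal is correct and follows the paper's route. The paper states this corollary without a separate proof, as an immediate consequence of Theorem~\ref{bfromleibman}, whose limit formula holds at every $x\in X$ and therefore already asserts that the limit exists. Your additional checks (that the right-hand side is finite, and that the normalizing shift $g(n)g_0$ is again a polynomial sequence) are accurate but not needed.
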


In the case where the nilmanifold is connected we have a nice analogy of Theorem C from \cite{leibmanpointwise}.

\begin{dfn}
Let $X$ be a nilmanifold, we define its \textit{maximal factor torus} to be $Z = G/([G,G]\Gamma)$. Denote by $p$ the projection to $Z$.
\end{dfn}
\begin{thm}\label{cfromleibman}
Let $X=G/\Gamma$ be a connected nilmanifold, let $Z$ denote its maximal torus, and let $g(n)$ be a polynomial sequences. Then for all $x\in X$ the following are equivalent:
\begin{enumerate}[label=(\roman*)]
    \item The sequence $\{(g(n)x)\}_{n\in \Z}$ is dense in $X$.
    \item $\{(g(n)x)\}_{n\in \Z}$ is $\IPr$-well-distributed on $X$.
     \item The sequence $\{(g(n)p(x))\}_{n\in \Z}$ is dense in $Z$.
\end{enumerate}
\end{thm}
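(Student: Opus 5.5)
The plan is to prove the cycle (ii)$\Rightarrow$(i)$\Rightarrow$(iii)$\Rightarrow$(ii). The first two steps are soft. For (ii)$\Rightarrow$(i), suppose the orbit misses an open set $U\subseteq X$. Take a nonnegative $f\in C(X)$ supported in $U$ with $\int f\,d\mu>0$. Every IP average of $f(g(n)x)$ vanishes, for instance along $n_i=10^{i-1}$ with $\Phi_N=[1,N]$, so the average cannot equal $\int f\,d\mu$. This contradicts (ii). For (i)$\Rightarrow$(iii), $p$ is continuous, surjective and equivariant: it carries $g(n)x$ to the image of $g(n)$ acting on $Z$ at $p(x)$. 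The image of a dense set is therefore dense in $Z$.

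The real content is (iii)$\Rightarrow$(ii), and I will reduce it to Leibman's Theorem C from \cite{leibmanpointwise} combined with Theorem~\ref{bfromleibman}. By Leibman's Theorem C (ordinary polynomial-sequence version), density of the projected sequence in the maximal factor torus of a connected nilmanifold implies density of $\{g(n)x\}$ in $X$. So we may assume (i). Theorem~\ref{bfromleibman} then gives a closed connected $H\le G$ and points $x_0,\dots,x_{r-1}$ with $\overline{\{g(n)x\}}=\bigcup_j Hx_j$. It also gives
$$\avgip f(g(n)x)=\sum_{j=0}^{r-1}\omega_{r,j}(\Phi,\ni)\int_{Hx_j}f\,d\mu_{Hx_j}.$$

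The key step is to show that every $Hx_j$ equals $X$. Each $Hx_j$ is a closed sub-nilmanifold, and the union of finitely many of them is all of $X$. By Baire, some $Hx_j$ has nonempty interior in $X$. A sub-nilmanifold with nonempty interior has full dimension, so it is open and closed. Since $X$ is connected, $Hx_j=X$. The other components are translates $Hx_i=b\,Hx_j$; more directly, each is a closed sub-nilmanifold of the same dimension as $Hx_j$, so by the same argument each equals $X$. Hence $\mu_{Hx_j}=\mu$ for all $j$. Finally $\sum_j\omega_{r,j}=1$, because the indicators $\mathbf{1}_{r,j}$, $j=0,\dots,r-1$, sum to $1$ and the limits defining $\omega_{r,j}$ exist. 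The right-hand side is therefore $\int_X f\,d\mu$, which is (ii).

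I expect the main obstacle to be this identification of the components with $X$ and of their Haar measures with $\mu$. One point is that the $Hx_j$ in Theorem~\ref{bfromleibman} may coincide ("not necessarily distinct"), with $r$ then possibly larger than the true number of components. This is harmless, since the weights still sum to $1$. The other point is the equal-dimension claim, which comes from the cyclic permutation structure inherited from Lemma~\ref{unipotnetlemma} via $\eta$. If this becomes delicate, the fallback is to quote directly the part of Leibman's Theorem C saying that $\overline{\{g(n)x\}}$ is a finite union of sub-nilmanifolds of equal dimension. Connectedness of $X$ then forces each one to be $X$.
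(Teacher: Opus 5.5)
Your proposal is correct and follows essentially the same route as the paper: the bump-function argument for (ii)$\Rightarrow$(i), Leibman's Theorem C for (iii)$\Rightarrow$(i), and Theorem~\ref{bfromleibman} together with $\sum_j\omega_{r,j}=1$ for (i)$\Rightarrow$(ii). Your Baire/dimension argument that every $Hx_j=X$ fills in the step the paper compresses into ``since $X$ is connected,'' and your direct proof of (i)$\Rightarrow$(iii) replaces a citation. For the equal-dimension claim, the cleanest justification is that all $Hx_j$ are orbits of the same $H$ with discrete stabilizers, so each has dimension $\dim H$; this is better than appealing to translates.
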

\begin{proof}
$(i)\Rightarrow (ii)$:
If the orbit is dense, then taking $H$ and $x_0,\dots,x_{r-1}$ as in Theorem \ref{bfromleibman}, we get that $\bigcup_{i=0}^{r-1} Hx_i = X$, and since $X$ is connected, we get that $Hx_0=\dots =Hx_{r-1} = X$. Hence, 
$$
\avgip f(g(n)x) = \sum_{j=0}^{r-1}\omega_{r,j}(\Phi,\ni)\cdot \int_{X}f\,d\mu = \int _X f\,d\mu.
$$ for all increasing F\o lner sequences $\Phi$, all $\ni$ with rational spectrum  and all $f\in C(X)$. Equivalently, $\{g(n)x\}_{n\in \Z}$ is $\IPr$-well-distributed. 

$(ii)\Rightarrow (i)$: Assume that the sequence is not dense, take the function $f$ that is a bump function supported outside of a closure of an orbit, then we will obviously get a contradiction.

$(i)\iff(iii)$: Was established by Leibman \cite[Theorem C]{leibmanpointwise}. 
\end{proof}

\section{Characteristic factors for rationally independent polynomials}

\subsection{Equidistribution of diagonal orbits}

In \cite{totergodic} Frantzikinakis and Kra showed that for a totally ergodic system the independent polynomial ergodic averages converge to a product of integrals. The following result about the distribution of polynomial orbits is particularly important for this paper.

\begin{thm}[polynomial equidistribution]\label{totergwd}
Let $(X,T)$ be a totally ergodic nilsystem and let $p_1,...,p_k : \Z \rightarrow \Z$ be a rationally independent sequence of integer polynomials. Then for almost all $x\in X$, the sequence
$$
\{(T^{p_1(n)}x,...,T^{p_k(n)}x)\}_{n\in \Z}
$$
is dense in $X^k$.
\end{thm}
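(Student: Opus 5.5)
The plan is to realize the diagonal polynomial orbit as a single polynomial sequence on the product nilmanifold $X^k$ and then apply Leibman's density criterion, Theorem~\ref{cfromleibman}(i)$\iff$(iii), which reduces density in $X^k$ to density of the projected orbit in the maximal factor torus. Write $X=G/\Gamma$ with $T$ translation by $a\in G$. Total ergodicity implies that $X$ is connected, since a disconnected ergodic nilsystem has $T^r$ non-ergodic for $r$ the number of components. Hence $X^k=G^k/\Gamma^k$ is a connected nilmanifold. The sequence
$$g(n)=(a^{p_1(n)},\dots,a^{p_k(n)})=\prod_{i=1}^k (e,\dots,a,\dots,e)^{p_i(n)}$$
is a polynomial sequence in $G^k$, and $(T^{p_1(n)}x,\dots,T^{p_k(n)}x)=g(n)(x,\dots,x)$. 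By Theorem~\ref{cfromleibman}, it therefore suffices to show that for almost every $x$, the projected sequence is dense in the maximal factor torus of $X^k$.

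The maximal factor torus of $X^k$ is $Z^k$, where $Z=G/([G,G]\Gamma)$ is a compact abelian group, in fact a torus since $X$ is connected. On $Z$, the translation $T$ induces an ergodic rotation by an element $\alpha$. Total ergodicity of $X$ passes to the factor $Z$, so $\alpha$ generates a totally ergodic rotation: every nontrivial character $\chi$ of $Z$ satisfies that $\chi(\alpha)$ is not a root of unity. The projected orbit is $(z+p_1(n)\alpha,\dots,z+p_k(n)\alpha)$ with $z=p(x)$. Because this is just a translate of the orbit through the origin, density is independent of $z$, so in fact no almost-everywhere restriction is needed at this stage.

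To prove density in $Z^k$, I use Weyl's criterion, which gives equidistribution and hence density. Take a nontrivial character $(\chi_1,\dots,\chi_k)$ of $Z^k$ and write $\chi_i(\alpha)=e(\theta_i)$. We need
$$\frac1N\sum_{n\le N} e\Big(\sum_i p_i(n)\theta_i\Big)\to 0.$$
By Weyl's polynomial equidistribution theorem, this holds unless the polynomial $\sum_i\theta_i p_i(n)$ has all non-constant coefficients rational. This is the main obstacle: showing that rational independence of the $p_i$ combined with total ergodicity forbids this. I would argue via the linear map $\theta\mapsto(\text{coefficients of }\sum\theta_ip_i)$. The $p_i$ have rational coefficients and are independent modulo constants, so this map is an injective $\mathbb{Q}$-linear map defined over $\Q$. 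Consequently, if all the image coordinates are rational, then every $\theta_i$ is rational. Rational $\theta_i$ with $\chi_i$ nontrivial contradicts total ergodicity. If instead $\chi_i$ is trivial, then $\theta_i=0$. So the only bad case is when all $\chi_i$ are trivial, which is excluded. One subtlety is that the $\theta_i$ are defined only modulo $1$; I handle this by choosing representatives, noting that integer shifts add integer-valued polynomials, which do not affect the argument.

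Density in $Z^k$ then follows, and Theorem~\ref{cfromleibman} lifts it to density in $X^k$ for every, hence almost every, $x$.
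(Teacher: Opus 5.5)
The paper gives no proof of this statement; it imports it from Frantzikinakis--Kra \cite{totergodic}. Your reduction has a genuine gap at the passage to the maximal factor torus.

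Total ergodicity makes $X$ connected, but it does not make $G$ connected. Leibman's criterion \cite[Theorem C]{leibmanpointwise} concerns the torus $X/[G^o,G^o]$, and on that torus $a$ acts by a unipotent affine map $z\mapsto Az+b$, not by a rotation. With the smaller quotient $G/([G,G]\Gamma)$, which is the form in which Theorem~\ref{cfromleibman} is stated, the equivalence (i)$\iff$(iii) is valid only for connected $G$.

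Here is a concrete counterexample. Let $G=\{(m,u,v):m\in\Z,\ u,v\in\mathbb{R}\}$ with $(m,u,v)(m',u',v')=(m+m',u+u',v+v'+mu')$. Let $\Gamma=\{(m,u,v)\in G: u,v\in\Z\}$ and $a=(1,\alpha,0)$ with $\alpha$ irrational.
\begin{itemize}
\item Then $X=G/\Gamma\cong\mathbb{T}^2$ via $(0,u,v)\Gamma\mapsto(u,v)$, and $T(u,v)=(u+\alpha,v+u)$, which is totally ergodic.
\item Here $[G,G]=\{(0,0,v)\}$, so $G/([G,G]\Gamma)$ is just the $u$-circle. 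By contrast $G^o$ is abelian, so $X/[G^o,G^o]=X$.
\item Take $p_1(n)=n^2$, $p_2(n)=n$ and $x=(s,t)$. Since $T^m(s,t)=(s+m\alpha,\,t+ms+\binom{m}{2}\alpha)$, the coordinates $(u_1,v_1,u_2,v_2)$ of $(T^{n^2}x,T^nx)$ satisfy $-u_1+u_2+2v_2\equiv 2t+2ns \pmod 1$.
\item So the orbit is not dense in $\mathbb{T}^4$ whenever $s\in\Q$. Yet its projection $(s+n^2\alpha,\,s+n\alpha)$ to $\mathbb{T}^2$ is dense for every $s$.
\end{itemize}
Thus your conclusion ``dense for every $x$'' is false, and your remark that no almost-everywhere restriction is needed is the symptom. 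You also cannot reduce to connected $G$. An ergodic nilrotation with connected $G$ whose Kronecker factor is a circle is itself a circle rotation, whereas this skew product does not have discrete spectrum.

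What is missing is the analysis on $X/[G^o,G^o]\cong\mathbb{T}^d$ with the affine action. There
$$T^mz=\sum_{l\ge0}\binom{m}{l}(A-I)^lz+\sum_{l\ge0}\binom{m}{l+1}(A-I)^lb.$$
So for a character $(\chi_1,\dots,\chi_k)$, the phase $\sum_i\chi_i(T^{p_i(n)}z)$ is a combination of the polynomials $\binom{p_i(n)}{l}$, $l\ge1$, with coefficients depending affinely on $z$. These polynomials can be rationally dependent even though the $p_i$ are not (for instance $n^2$, $n$, $\binom{n}{2}$ above). Hence your injectivity argument for $\theta\mapsto\sum_i\theta_ip_i$ no longer applies.

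Instead, for each of the countably many nontrivial characters, you must show that the set of $z$ for which all non-constant coefficients are rational is null. This is automatic when the $z$-dependence does not cancel, since the bad set is then a countable union of proper closed cosets. When it does cancel, a further argument using total ergodicity and the independence of the $p_i$ is needed. This is exactly where ``almost every $x$'' enters, and it is the content of the Frantzikinakis--Kra argument.

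Your Weyl computation is correct and suffices in the special case where $G$ is connected. There $a$ really does act on $G/([G,G]\Gamma)$ by a rotation, and density does hold for every $x$.
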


\subsection{Rational Kronecker is characteristic}
In this section we are strengthening the Theorem~\ref{polychar} by showing that $K_{\rat}(X)$ is the $\IPr$-characteristic factor for a family of nonlinear rationally independent integer polynomials.

\begin{thm}[The rational Kronecker factor is $\IPr$-characteristic]\label{ratkronisfact}
Let $\mps$ be an invertible measure preserving system, let $k\ge 1$, and let $p_1,\dots,p_k :\Z \rightarrow \Z$ be a rationally independent family of nonlinear integer polynomials. Then, the rational Kronecker factor $K_{\rat}(X)$ is an $\IPr$-characteristic factor for $P = (p_1,\dots,p_k)$.
\end{thm}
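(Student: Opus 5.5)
By Theorem~\ref{polychar}, there is $k_0$ such that $Z_{k_0-1}(X)$ is $\IPr$-characteristic for $P$. It therefore suffices to show the following: if $f_1,\dots,f_k\in L^\infty(X)$ are measurable with respect to $Z_{k_0-1}(X)$ and $E(f_a\mid K_{\rat}(X))=0$ for some $a$, then the $\IP$-averages of $\prod_i T^{p_i(n)}f_i$ tend to $0$ in $L^2$.

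First I reduce to ergodic nilsystems. Pass to the ergodic decomposition; the rational Kronecker factor is compatible with it, since rational eigenfunctions of ergodic components glue measurably. Then use that $Z_{k_0-1}$ of an ergodic system is an inverse limit of nilsystems, and approximate each $f_i$ in $L^2$ by continuous functions on a $(k_0-1)$-step ergodic nilsystem $G/\Gamma$, keeping $E(f_a\mid K_{\rat})=0$. This is possible because conditional expectation onto $K_{\rat}$ commutes with the projection to the nilfactor. Boundedness and multilinearity control the error uniformly in $N$. Finally, $K_{\rat}$ of the nilsystem is the finite factor of its connected components $Y_0,\dots,Y_{r-1}$. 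So the condition says that $\int_{Y_j} f_a\,d\mu_{Y_j}=0$ for all $j$, using that a connected nilsystem is totally ergodic.

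Next I compute the limit pointwise. Fix $x$ and consider the polynomial sequence $g(n)=(a^{p_1(n)},\dots,a^{p_k(n)})$ acting on $X^k$ at the point $(x,\dots,x)$. By Theorem~\ref{bfromleibman}, for every $x$ the averages $\E_{n\in F_N}\prod_i f_i(T^{p_i(n)}x)$ converge to $\sum_j \omega_{R,j}(\Phi,\ni)\int_{Hx_j}F\,d\mu_{Hx_j}$, where $F=f_1\otimes\cdots\otimes f_k$. By bounded convergence, the $L^2$ limit is this pointwise limit, so it suffices to show that every $\int_{Hx_j} F$ vanishes for a.e.\ $x$. To identify the orbit closure, I pass to a power $T^{r}$ and a residue class $n\equiv s \pmod {r'}$ with $r'$ a suitable multiple of $r$. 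Each $T^r$ restricted to $Y_i$ is totally ergodic, and the polynomials $n\mapsto p_i(r'n+s)$ split into a constant part plus a rationally independent family. Theorem~\ref{totergwd} then gives, for a.e.\ $x$, that the orbit closure of $(T^{p_1(r'n+s)}x,\dots)$ is a product $Y_{c_1}\times\cdots\times Y_{c_k}$ of components, up to a fixed translation. I would organise this via Theorem~\ref{cfromleibman}, applied to the connected nilmanifold $Y_{c_1}\times\cdots\times Y_{c_k}$. Consequently each sub-nilmanifold $Hx_j$ is a full product of components, $\mu_{Hx_j}$ is the product of the Haar measures, and
\[
\int_{Hx_j}F=\prod_i\int_{Y_{c_i(j)}} f_i\,d\mu_{Y_{c_i(j)}}=0,
\]
since the $a$-th factor vanishes.

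The main obstacle is this identification step. Theorem~\ref{totergwd} is stated for totally ergodic systems and only for almost every $x$. Theorem~\ref{bfromleibman} instead decomposes the orbit closure into cosets of one connected $H$ cyclically visited along $n$. These two descriptions must be matched: I need that the connected component $H$ acts transitively on each product $Y_{c_1}\times\cdots\times Y_{c_k}$. I would argue dimension-wise: the closure of the subsequence $n\equiv s$ is a union of some of the $Hx_j$, and by density it equals the connected product, forcing $Hx_j$ to be that product. Rational independence is needed to keep the restricted family independent after rescaling $n\mapsto r'n+s$ and subtracting constants, and nonlinearity is used only through Theorem~\ref{polychar}. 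Once this is in place, the null set of bad $x$ is harmless for the $L^2$ statement.
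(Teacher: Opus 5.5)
Your proposal is correct and follows essentially the same route as the paper: reduce via Theorem~\ref{polychar} and approximation to an ergodic nilsystem with continuous functions, apply the limit formula of Theorem~\ref{bfromleibman} to $g(n)=(a^{p_1(n)},\dots,a^{p_k(n)})$, and identify $H$ by applying Theorem~\ref{totergwd} to $(X_i,T^r)$ along a subprogression. The paper resolves your ``main obstacle'' a bit more directly, using only the progression $n\mapsto r^sn$: the dense sub-orbit, which is \emph{contained in} (not a priori a union of) the $Hx_j$, forces some $Hx_j$ to have positive measure, so the closed connected subgroup $H$ is open in $G^k$, i.e.\ $H=(G^o)^k$, and therefore every $Hx_j$ is a product of connected components at once.
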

\begin{proof}
It follows from the ergodic decomposition that we may assume that the system $X$ is ergodic. In Theorem~\ref{polychar} we showed that $Z_s(X)$ is $\IPr$-characteristic for some $s\geq 1$. Therefore, we may assume that $X$ is an inverse limit of ergodic nilsystems and by standard approximation arguments we may further assume that $X$ itself is an ergodic nilsystem, denote its action by $T = R_a$. Let $f_1,...,f_k\in L^\infty(X)$ and suppose that $E(f_i \mid \Krat(X)) = 0$ for some $i=1,...,k$ we need to show that
$$
\avgip \prod_{i=1}^k f_i(T^{p_i(n)}x) = 0
$$ in $L^2(\mu)$.
Without loss of generality we may assume that $f_i=f_1$. Every $L^2$-function can be approximated by continuous functions and therefore by Lebesgue dominated convergence theorem it suffices to assume that $f_1,...,f_k$ are continuous and show that $$
\avgip \prod_{i=1}^k f_i(T^{p_i(n)}x) = 0.
$$ for $\mu$-almost every $x\in X$. 

Let $F = f_1\otimes f_2\otimes\cdots\otimes f_k\in C(X^k)$, and let $g:\Z \to G^k$ the polynomial sequence $g(n) = (a^{p_1(n)},\dots ,a^{p_k(n)})$. From Theorem~\ref{bfromleibman}, we have that there exists some closed connected subgroup $H\le G^k$, and $x_0,\dots,x_{t-1}$, such that 
$$
\avgip \prod_{i=1}^k f_i(T^{p_i(n)}x) = \sum_{j=0}^{t-1}\omega_{t,j}(\Phi,\ni)\cdot \int_{Hx_j}F\,d\mu_{Hx_j}.
$$

We will show that $H = (G^o)^k$ for almost all $x\in X$.

Let $X_1 = X^o, X_2,\dots,X_r$ be the connected components of $X$, then $(X_i,T^r)$ are totally ergodic nilsystems for all $i = 1,\dots, r$. Observe that $p_i(r^sn) = m_i + rq_i(n)$ for some sufficiently large $s \in \N$, some constant $m_i$ and an integer valued  polynomial $q_i$ of the same degree as $p_i$. Since $p_1,...,p_k$ are independent over $\Q$ so are $q_1,...,q_k$. Let $x\in X_i$ and apply Theorem~\ref{totergwd} with the system $(q_1,...,q_k)$ and $T^r$. We conclude that $(T^{rq_1(n)} x,...,T^{rq_k(n)}x)_{n\in \N}$ is dense in $X_i^k$ for almost every $x\in X_i$. Therefore,  $$
(T^{p_1(r^sn)}x,\dots,T^{p_k(r^sn)}x)_{n\in \N}
= (T^{m_1}(T^{r})^{q_1(n)}x,\dots,T^{m_k}(T^r)^{q_k(n)}x)_{n\in \N}$$
is dense in $X_{i_1}\times X_{i_2}\times\dots \times X_{i_k}$ for almost all $x\in X_i$, where $i_ j = i + m_j \pmod r$. Since
$\overline{(T^{p_1(r^sn)}x,\dots,T^{p_k(r^sn)}x)_{n\in \N}}\subseteq \overline{(T^{p_1(n)}x,\dots,T^{p_k(n)}x)_{n\in \N}} = \bigcup_{j=1}^r Hx_j$ (where both $H$ and $x_1,\dots,x_r$ depend on $x$). We see that $Hx_j$ is of positive measure in $X^k$ for some $j$. Since $\Gamma$ is discrete, so is the stabilizer of $x_j$ and therefore we must have that $H$ is of positive measure in $G^k$. Moreover, $H$ is closed and connected and therefore $H= (G^o)^k$.

If $H = (G^o)^k$, we have that 
$$
\int_{Hx_j}F\,d\mu_{Hx_j} = \left (\int_{X_{x_j,1}} f_1\,d\mu_{X_{x_j,1}}\right)\cdot \dots \left ( \int_{X_{x_j,k}}f_k\,d\mu_{X_{x_j,k}}\right ),
$$
where $X_{x_j,1},\dots,X_{x_j,k}$ are connected components of $X$. However, from the assumption $E(f_1 \mid K_{\rat}(X)) = 0$, it follows that $\int_{Y}f_1\,d\mu_Y = 0$ whenever $Y$ is a connected component. Thus,
$$
\avgip \prod_{i=1}^k f_i(T^{p_i(n)}x) = 0
$$
for almost all $x\in X$, as required. 
\end{proof}

\section{Applications}
\subsection{Ergodic large intersection property}

We use Theorem~\ref{ratkronisfact} to prove the $\IPr$ large intersection property (Theorem~\ref{main:thm}). In \cite{indepoly} it was shown that the set of large intersection is syndetic. The conceptual difference is that when averaging over $\N$ one may always write $\N$ as the union of $r\N + i$ for $i=0,\dots,r-1$, and thus pass to an average over the action $T^r$. However, unfortunately, in $\IP(\ni)$ the set of all elements with a given residue modulo $r$ is no longer an $\IP$. To overcome this technicality, we consider the following functions:
\begin{dfn}
Let $r, k\ge 1$. Define
\begin{align*}
    \psi_r(n_1,\dots,n_k) &= \mathbf{1}_{r,0}(n_1)\cdot \mathbf{1}_{r,0}(n_2)\cdot ... \cdot \mathbf{1}_{r,0}(n_k).
\end{align*} 
Moreover, for a polynomial system $P = (p_1,\dots,p_k)$, of integer valued polynomials define a function $\psi^P_r: \Z/r^s\Z \rightarrow \C$ as $$\psi^P_r(n) = \psi_{r}(p_1(n),p_2(n),\dots,p_k(n))$$
for $s$ large enough so that this function will be well defined (with $\psi_r$ and $\psi_r^P$ we do the same abuse of notation as with $\mathbf{1}_{r,j}$).
\end{dfn}

First we need to show that there are sufficiently many $n\in \IP(\ni)$ such that $\psi^P_r(n) = 1$. 

\begin{lemma}
Let $r,k \ge 1$, let $P = (p_1,\dots,p_k)$ be a polynomial system such that $p_i(0)=0$ for $i=1,\dots,k$, then  for any sequence $\ni$ with rational spectrum and any increasing F\o lner sequence $\Phi =(\Phi_N)_{N\in \N}$, we have
\begin{equation}\label{limitpsy}
\avgip \psi^P_r(n) >0.
\end{equation}
\end{lemma}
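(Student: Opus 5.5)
Since $p_i(0)=0$ for all $i$, every $n\equiv 0 \pmod{r^s}$ satisfies $p_i(n)\equiv p_i(0)=0\pmod r$. This holds once $s$ is large enough that $\psi^P_r$ is well defined on $\Z/r^s\Z$; integer-valued polynomials may have denominators, so we take $s$ large enough that $p_i(n+r^s m)\equiv p_i(n)\pmod r$ for all $n,m$. Hence $\psi^P_r\ge \mathbf{1}_{r^s,0}$ pointwise, and $\psi^P_r\ge0$ everywhere. It therefore suffices to show that
$$\omega_{r^s,0}(\Phi,\ni)=\avgip \mathbf{1}_{r^s,0}(n)>0.$$
This limit exists by Theorem~\ref{ipmeanthm} applied to the rotation on $\Z/r^s\Z$, as in the definition of $\omega_{r,j}$.

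To bound $\omega_{q,0}$ from below with $q=r^s$, we work directly with the combinatorial structure of $\IP_{\Phi_N}(\ni)$. Write $\Phi_N=[M,a_N]$. The elements of $F_N$ are the sums $\sum_{i\in A}n_i$ over subsets $A\subseteq\Phi_N$, counted with multiplicity since $\IP$ is a multiset, so $|F_N|=2^{|\Phi_N|}$. The proportion of subsets $A$ with $\sum_{i\in A}n_i\equiv0\pmod q$ is the distribution at $0$ of a sum of independent random variables $\xi_i\in\{0,n_i\}$ in $\Z/q\Z$, each taking its two values with probability $1/2$. Using Fourier analysis on $\Z/q\Z$, this proportion equals
$$\frac1q\sum_{t\in\Z/q\Z}\prod_{i\in\Phi_N}\frac{1+e^{2\pi i t n_i/q}}{2}.$$

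The key step is to show that this quantity stays bounded below by a positive constant. I would argue more robustly by a pigeonhole/subgroup argument. Group the residues $n_i \bmod q$. Let $S$ be the set of residues that occur infinitely often among the $n_i$ with $i\ge M$. For $N$ large, each residue in $S$ appears many times in $\Phi_N$, and the finitely many other indices are bounded in number, say by $C$. The random walk generated by residues in $S$, each appearing at least $L\to\infty$ times, converges to the uniform distribution on the subgroup $\langle S\rangle$, since each factor $\frac{1+\chi(n_i)}{2}$ with $\chi(n_i)\ne1$ has modulus $<1$. The remaining at most $C$ indices (bounded, though their particular values can vary with $N$ only through finitely many possibilities) shift the distribution by some element $c_N$. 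To get mass at $0$, take the subset of these exceptional indices equal to $\emptyset$. This happens with probability $2^{-C}$, and then the sum lies in $\langle S\rangle$ with near-uniform distribution, giving mass at least roughly $2^{-C}/q$ at $0$.

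The main obstacle is exactly this uniform control: making sure that "converges to uniform on $\langle S\rangle$" holds quantitatively, via the Fourier formula, where the characters nontrivial on $\langle S\rangle$ decay like $\theta^{L}$ with $\theta<1$. It also requires handling the increasing Følner sequence with fixed $M$, so that the exceptional indices genuinely form a fixed finite set. Once this is established, we obtain $\liminf_N \E_{n\in F_N}\mathbf{1}_{q,0}(n)\ge 2^{-C}/q>0$, and therefore \eqref{limitpsy} holds. Rational spectrum is only needed to guarantee that the limit exists.
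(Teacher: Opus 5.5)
Your proposal is correct, and its skeleton matches the paper's. Both arguments reduce to $\mathbf{1}_{r^s,0}$ using $p_i(0)=0$. Both then dispose of the finitely many indices whose residue class occurs only finitely often: the paper deletes them from the sequence, while you condition on selecting none of them at a cost of $2^{-C}$, which amounts to the same move.

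The routes differ in the core step. The paper uses a purely combinatorial completion argument. It reserves a finite set $T$ containing $r$ indices from each recurring residue class. Any subset sum of the remaining indices lies in the subgroup generated by the recurring residues, so it can be corrected to $0$ modulo $r$ by adding a suitable subset of $T$. Hence at least $2^{|\Phi_N|-|T|}$ of the $2^{|\Phi_N|}$ sums are divisible by $r$.

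You instead compute the distribution by Fourier analysis on $\Z/q\Z$. For a character $\chi$ nontrivial on $\langle S\rangle$, some $s\in S$ has $\bigl|\frac{1+\chi(s)}{2}\bigr|\le\cos(\pi/q)<1$. The product is therefore at most $\cos(\pi/q)^L\to 0$, the non-exceptional sum equidistributes on $\langle S\rangle$, and you get $\liminf\ge 2^{-C}/|\langle S\rangle|$. The ``main obstacle'' you flag is thus already settled by that one-line bound.

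The paper's route needs no estimates at all. Yours gives a sharper constant, in fact the exact limit. It also shows that each Fourier product either tends to $0$ or is eventually constant along the nested intervals $[M,a_N]$. So the limit exists for every sequence $(n_i)$, and rational spectrum is not actually needed for this lemma.

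One small point to tidy: the statement asserts that $\lim_{N\to\infty}\E_{n\in F_N}\psi^P_r(n)$ exists, but you only cite existence for $\mathbf{1}_{r^s,0}$. Since $\psi^P_r$ is $r^s$-periodic, it is a finite linear combination of the $\mathbf{1}_{r^s,j}$. Existence therefore follows from the same mean ergodic theorem, or from your Fourier formula; the paper instead invokes Corollary~\ref{afromleibman}.
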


\begin{proof}
Define $X = G = (\Z/r^s\Z)^k$ for $s$ large enough and polynomial sequence in $G$ as $g(n) = (p_1(n),p_2(n),\dots, p_k(n))$, we have that $\psi_r\in C(X)$, and that $\psi_r(g(n)e) = \psi_r^P(n)$, therefore, by Corollary \ref{afromleibman}, we have that the limit exists.

Since we have that $\psi_r^P(n) \ge \ind{r^s,0}(n)$ for all $n\in \Z$ it is enough to show that 
$$
c := \avgip \ind{r,0}(n) > 0.
$$
for all $r\in \N$.

Assume without loss of generality that $\Phi_N$ are of the form $[1,a_N]$. First, partition $\N$ into a disjoint union of sets $A_0\cup A_1 \cup \cdots A_{r-1}$, such that for every $k \in A_i$, we have that $n_k \equiv i \pmod{r}$. Note that since $\mathbf{1}_{r,0}$ is non-negative, if some $A_m$ is finite, then it is enough to prove the statement for a sequence obtained after removing all elements with an index in $A_m$ from $\ni$.  Thus, we can assume that all $A_m$ are infinite or empty. Let $r_1,...,r_t$ be the numbers such that $A_{r_i}$ is infinite and enumerate them, 
$$(l_{r_1,i})_{i\in\N},\dots (l_{r_t,i})_{i\in \N}.$$

Consider the set $ T = \{n_{l_{r_i,j}} : 1\leq i \leq t, \space 1\leq j\leq r \}$ of the first $r$ elements in each set. We have that for any finite sum of elements from $\ni \backslash T$,  some elements of $T$ may be added to it so that the resulting sum will be divisible by $r$.

Since $T$ is finite, there exists $N$ sufficiently large so that $T\subseteq \Phi_N$. Thus, for all larger values of $N$ we see that at least $2^{|\Phi_N|-|T|}$ elements in $\IP_N(\ni)$ are divisible by $r$. In particular, we must have that $c\geq \frac{1}{2^{|T|}}$, as required.
\end{proof}

Another statement that we need is that the rational Kronecker factor remains characteristic when polynomials are multiplied for some general family of functions of $n$ (we will use it only for $\psi = \psi_r$, but the proof is the same so we state it more generally).

\begin{prop}\label{ratkroncharpsi}
Let $\mps$ be an invertible system, let $r\in \mathbb{N}$ and $P = (p_1,\dots,p_k)$ be polynomial system of nonlinear rationally independent polynomials, and let $f_1,\dots,f_k\in L^\infty(X)$. Then for all $\psi : (\Z/r\Z)^k\rightarrow \C $, all $\ni$ with rational spectrum and an increasing F\o lner sequence $\Phi = \fol$ we have that 
$$
\begin{aligned}
\lim_{N\rightarrow \infty}\left \| \E_{n\in F_N}\psi(p_1(n),\dots,p_k(n))\prod_{i=1}^k T^{p_i(n)}f_i - \E_{n\in F_N}\psi(p_1(n),\dots,p_k(n)) \prod_{i=1}^kT^{p_i(n)}\widetilde{f_i}\right \|_{L^2(X)}^2 = 0
\end{aligned}
$$
in $L^2(X)$, where $\widetilde{f_i} = E(f_i \mid K_{rat}(X))$.
\end{prop}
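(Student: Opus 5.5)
The plan is to remove the weight $\psi(p_1(n),\dots,p_k(n))$ by Fourier expansion on $(\Z/r\Z)^k$, which turns it into a combination of rational exponentials in $n$. Each exponential can then be absorbed into the system as an extra rational rotation, and Theorem~\ref{ratkronisfact} applied to that extended system.

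Write $\psi(y_1,\dots,y_k)=\sum_{\xi\in(\Z/r\Z)^k}\hat\psi(\xi)\prod_{i=1}^k e(\xi_i y_i/r)$. By the triangle inequality it suffices to prove the statement for a single character, i.e. for the weight $\prod_i e(\xi_i p_i(n)/r)$. Since the expression is multilinear in $(f_1,\dots,f_k)$, a telescoping argument reduces it further: it is enough to show that
$$\lim_{N\to\infty}\Big\|\E_{n\in F_N}\prod_{i=1}^k e(\xi_ip_i(n)/r)\,T^{p_i(n)}f_i\Big\|_{L^2(X)}=0$$
whenever $E(f_j\mid K_{\rat}(X))=0$ for some $j$.

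To absorb the phases, pass to the product system $Y=X\times\Z/r\Z$ with transformation $S=T\times R$, where $R(z)=z+1$. Set $g_i(x,z)=f_i(x)e(\xi_iz/r)$. Then
$$S^{p_i(n)}g_i(x,z)=e(\xi_ip_i(n)/r)\,e(\xi_iz/r)\,T^{p_i(n)}f_i(x).$$
The product $\prod_i e(\xi_iz/r)$ is a unimodular function of $z$ alone. Hence the $L^2(Y)$ norm of $\E_{n\in F_N}\prod_i S^{p_i(n)}g_i$ equals the $L^2(X)$ norm of the twisted average above. Theorem~\ref{ratkronisfact}, applied to the system $Y$, will give the desired vanishing, provided $E(g_j\mid K_{\rat}(Y))=0$.

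This last condition is the main point to check. The rational Kronecker factor of a product with a finite rotation should be $K_{\rat}(X)\otimes L^2(\Z/r\Z)$, so the condition should reduce to $E(f_j\mid K_{\rat}(X))=0$. To justify it, decompose any $u\in L^2(Y)$ in $z$-Fourier modes, $u=\sum_\eta u_\eta(x)e(\eta z/r)$. The function $u$ is an eigenfunction of $S$ with rational eigenvalue $\lambda$ exactly when each $u_\eta$ is an eigenfunction of $T$ with the rational eigenvalue $\lambda e(-\eta/r)$. It follows that $g_j=f_j(x)e(\xi_jz/r)$ is orthogonal to every rational eigenfunction of $S$, since $f_j\perp K_{\rat}(X)$. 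The result then follows, since the argument does not rely on ergodicity.

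If one prefers to avoid the product system, there is an alternative route that I expect to be messier. It repeats the proof of Theorem~\ref{ratkronisfact} on nilsystems, using Theorem~\ref{bfromleibman} for the polynomial sequence $(a^{p_1(n)},\dots,a^{p_k(n)},p(n)\bmod r^s)$ in $G^k\times\Z/r^s\Z$, and shows that the connected part $H$ is still $(G^o)^k$ up to the finite coordinate. Either way, the obstacle is confirming that twisting by rational phases cannot correlate with functions orthogonal to $K_{\rat}$; the product-system trick handles this cleanly.
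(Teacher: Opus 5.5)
Your proposal is correct and follows essentially the paper's proof: Fourier-expand $\psi$ into rational characters, pass to $X\times\Z/r\Z$ with the rotation $z\mapsto z+1$ so that the phases become the eigenfunctions $e(\xi_i z/r)$, and apply Theorem~\ref{ratkronisfact} to that extended system. Your Fourier-mode check that $E(g_j\mid K_{\rat}(Y))=0$ justifies the identity $K_{\rat}(\overline{X})=K_{\rat}(X)\times\Z/r\Z$, which the paper only asserts.
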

\begin{proof}
Using Fourier expansion of the function $\psi(n)$, it is enough to prove the statement with $\psi(n_1,\dots, n_k) = \prod_{j=1}^ke^{2\pi in_jq_j}$ for some $q_1,\dots,q_k\in \Q$ with denominator dividing $r$.

Let us define a new system $\overline{X} = X\times \Z/r\Z$, and define the action $\overline{T}(x,m) = (Tx,m+1)$ for $x\in X, m \in \Z/r\Z$. This system is invertible, $X$ is its factor, and $K_{rat}(\overline{X})=K_{rat}(X) \times \Z/r\Z$, so we may think of our functions $f_i$ and $\widetilde{f_i}$ as defined on $\overline{X}$. We have that

$$
\begin{aligned}
& \E_{n\in F_N}\left(\prod_{j=1}^ke^{2\pi i p_j(n)q_j}\right) \prod_{i=1}^k T^{p_i(n)}f_i - \E_{n\in F_N}\left(\prod_{j=1}^ke^{2\pi i p_j(n)q_j}\right)\prod_{i=1}^k T^{p_i(n)}\widetilde{f_i} =  \\ & \qquad
\left (\E_{n\in F_N}\prod_{j=1}^k \overline{T}^{p_j(n)}(\phi_j(x,m)f_j(x)) - \E_{n\in F_N}\prod_{j=1}^k\overline{T}^{p_j(n)}(\phi_j(x,m)\widetilde{f_j}(x))\right )\prod_{j=1}^k\overline{\phi_j(x,m)},
\end{aligned}
$$
where $\phi_j(x,m) = e^{2\pi i q_jm}$ is an eigenfunction with eigenvalue $e^{2\pi i q_j}$. Since $\phi_j$ is measurable with respect to $ K_{rat}(\overline{X})$, the limit must be zero from Theorem \ref{ratkronisfact} applied to a system $\overline{X}$ and functions $g_j(x,m) = \phi_j(x,m)f_j(x)$, noting that $\E(g_j \mid \Krat(\overline{X})) = \phi_j \cdot \E(f_j\mid \Krat(X))$.

\end{proof}

We are now ready to prove the main multiple recurrence result of our paper (Theorem~\ref{main:thm}):

\begin{thm*}
Let $\mps$ be an invertible measure preserving system, $k \ge 1$, and let $p_1,\dots,p_k : \Z \rightarrow \Z$ be a family of rationally independent polynomials. Assume further that $p_i(0) = 0$, $i = 1,\dots,k$. Then for any $A\in \mathcal{B}$ with $\mu(A) >0$, and all $\varepsilon > 0$, the set 
\begin{equation}
\left\{n\in \N: \mu(A\cap T^{p_1(n)}A\cap T^{p_2(n)}A\cap\dots\cap T^{p_k(n)}A) > \mu(A)^{k+1} - \varepsilon\right\} 
\end{equation}
has positive lower $\IPr$ density. 
\end{thm*}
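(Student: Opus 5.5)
The plan is to compare the multiple intersections with a weighted average along $\IP_{\Phi_N}(\ni)$. The weight $\psi^P_r$ restricts attention to those $n$ for which all $p_i(n)$ are divisible by a suitable $r$. On such $n$ the rational Kronecker part of $\ind{A}$ is invariant under every $T^{p_i(n)}$.

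Fix $\varepsilon>0$, a sequence $\ni$ with rational spectrum and an increasing F\o lner sequence $\Phi$. Write $f=\ind{A}$ and $\widetilde f=E(f\mid \Krat(X))$. Since $\Krat(X)=\bigvee_r K_r(X)$, choose $r$ so that $g:=E(f\mid K_r(X))$ satisfies $\|\widetilde f-g\|_{L^2}<\delta$, where $\delta\ll\varepsilon/k$. Consider
$$
\Lambda_N:=\E_{n\in F_N}\psi^P_r(n)\int_X f\prod_{i=1}^k T^{p_i(n)}f\,d\mu .
$$
The steps are as follows.

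\begin{enumerate}
\item By Proposition~\ref{ratkroncharpsi} with $\psi=\psi_r$, after integrating against $f$ (which lies in $L^\infty$), we have $\Lambda_N-\E_{n\in F_N}\psi^P_r(n)\int f\prod_i T^{p_i(n)}\widetilde f\,d\mu\to0$.
\item Replace each $\widetilde f$ by $g$. A telescoping bound using $\|f\|_\infty\le1$ gives an error of at most $k\delta$.
\item Whenever $\psi^P_r(n)=1$, each $p_i(n)$ is divisible by $r$, and $T^r$ acts trivially on $K_r(X)$ (a rotation on a subgroup of $\Z/r\Z$). Hence $T^{p_i(n)}g=g$ and the integrand equals $\int f g^k\,d\mu$.
\item Since $g^k$ is $K_r$-measurable, $\int f g^k=\int g^{k+1}$, and Jensen gives $\int g^{k+1}\ge(\int g)^{k+1}=\mu(A)^{k+1}$.
\end{enumerate}

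Combining these with the preceding lemma, where $c:=\avgip\psi^P_r(n)>0$, yields
$$
\liminf_{N\to\infty}\Lambda_N\ \ge\ c\,\mu(A)^{k+1}-k\delta .
$$

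Now let $S$ be the set in the statement. For $n\notin S$ the intersection has measure at most $\mu(A)^{k+1}-\varepsilon$, and it is always at most $1$. Therefore
$$
\Lambda_N\le \E_{n\in F_N}\ind{S}(n)+\bigl(\mu(A)^{k+1}-\varepsilon\bigr)\E_{n\in F_N}\psi^P_r(n).
$$
Taking $\liminf$ gives
$$
\liminf_N \frac{|F_N\cap S|}{|F_N|}\ge c\varepsilon-k\delta>0
$$
for $\delta<c\varepsilon/k$. One caveat: $c$ depends on $r$, and $r$ depends on $\delta$. To avoid circularity I will use the cruder inequality $\int f g^k\ge\mu(A)^{k+1}$ with error handled \emph{inside} the weighted average. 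That is, in step 2 I bound $\bigl|\E\psi^P_r(\cdot)\bigr|\le\E\psi^P_r$ pointwise, so the error becomes $k\delta\cdot\E_{n\in F_N}\psi^P_r(n)$. The final bound is then $c(\varepsilon-k\delta)$, which is positive once $\delta<\varepsilon/k$ is chosen before $r$. This still needs care: the $L^2$ error of $\widetilde f$ versus $g$ must be controlled uniformly in $n$, which holds since each $T^{p_i(n)}$ is an isometry. Since $\ni$ and $\Phi$ were arbitrary, $S$ has positive lower $\IPr$-density.

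The main obstacle is the hypotheses of Proposition~\ref{ratkroncharpsi} and Theorem~\ref{ratkronisfact}, which require all $p_i$ to be nonlinear, whereas the statement allows any rationally independent family. Rational independence permits at most one linear polynomial, say $p_1(n)=an$. For that case I would first try to establish the relevant instance of Conjecture~\ref{IPratchar:conj}. Concretely, I would run the van der Corput step of Theorem~\ref{polychar} with $q(n)=n^{b+1}$ separately on the nonlinear part, and treat the linear term through Theorem~\ref{ipcharlinear}, in the spirit of the Furstenberg--Weiss argument for $\{n,n^2\}$. Failing that, the argument above proves the theorem for nonlinear families, which is exactly what the available characteristic factor results support.
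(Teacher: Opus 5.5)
\textbf{Genuine gap: the argument only covers nonlinear families.} Theorem~\ref{main:thm} allows any rationally independent family, so it may contain one linear polynomial, for example $(n,n^2)$. Proposition~\ref{ratkroncharpsi} and Theorem~\ref{ratkronisfact} are available only for nonlinear families. Your proposed remedy is to establish the relevant case of Conjecture~\ref{IPratchar:conj}, but that is exactly what the paper leaves open. As it stands, the proposal proves a strictly weaker statement.

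For nonlinear families your argument is correct and essentially matches the paper's. The paper normalizes the weighted average by $1/c$ and argues by contradiction. That achieves the same thing as your device of keeping the $K_r$-approximation error inside the weight. One minor point: with $Tf=f\circ T$, your $\Lambda_N$ computes $\mu(A\cap T^{-p_1(n)}A\cap\cdots)$, so you should run it with $-p_i$. This is harmless.

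\textbf{The missing idea.} You do not need an $L^2$ characteristic-factor result for $P$ itself. You only use integrated correlations, and these are invariant under a common shift of all exponents. Take $q(n)=n^{b}$ with $b=\deg P+1$. By invariance of $\mu$,
\[
\mu\bigl(A\cap T^{p_1(n)}A\cap\cdots\cap T^{p_k(n)}A\bigr)=\mu\bigl(T^{q(n)}A\cap T^{p_1(n)+q(n)}A\cap\cdots\cap T^{p_k(n)+q(n)}A\bigr).
\]
The family $Q=(q,p_1+q,\dots,p_k+q)$ consists of $k+1$ nonlinear polynomials vanishing at $0$, and it is rationally independent. Indeed, suppose $a_0q+\sum_i a_i(p_i+q)$ is constant. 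The coefficient of $n^b$ forces $a_0+\sum_i a_i=0$. Independence of the $p_i$ then forces all $a_i=0$, and hence $a_0=0$.

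\textbf{How your argument then runs.} Apply Proposition~\ref{ratkroncharpsi} to $Q$ with all $k+1$ functions equal to $\ind{A}$, integrate against the constant function $1$, and use the weight $\psi^Q_r$ instead of $\psi^P_r$.
\begin{enumerate}
\item The positivity lemma applies because $Q(0)=0$.
\item $\psi^Q_r(n)=1$ exactly when $r$ divides $q(n)$ and every $p_i(n)$. So every iterate fixes $g$ on the support of the weight, and the main term is $\int g^{k+1}\,d\mu\ge\mu(A)^{k+1}$ directly.
\item The rest of your bookkeeping goes through unchanged. The telescoping error becomes $(k+1)\delta\cdot\E_{n\in F_N}\psi^Q_r(n)$, and the comparison with $\ind{S}$ is as before.
\end{enumerate}

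This is why the linear case is no obstacle for this theorem, even though Conjecture~\ref{IPratchar:conj} is open. The conjecture concerns $L^2$ averages, where no such common shift is available.
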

\begin{proof}
Let $S_\varepsilon=\left\{n\in \N: \mu(A\cap T^{p_1(n)}A\cap T^{p_2(n)}A\cap\dots\cap T^{p_k(n)}A) > \mu(A)^{k+1} - \varepsilon\right\}$. Suppose, by contradiction, that $S_\epsilon$ has zero lower $\IPr$ density, then there exists a sequence $\ni$ with rational spectrum, an increasing F\o lner sequence $\Phi=\fol$ (note that we may need to take a subsequence, but it is still an increasing F\o lner sequence), such that \begin{equation}\label{Ssmall}\lim_{N\rightarrow\infty} \frac{|S_\varepsilon \cap \IP_{\Phi_N}(\ni)|}{|\IP_{\Phi_N}(\ni)|}=0.
\end{equation}
In particular, for all $n\not\in S_\varepsilon$, we have $$\mu(A\cap T^{p_1(n)}A\cap T^{p_2(n)}A\cap\dots\cap T^{p_k(n)}A) \leq \mu(A)^{k+1} - \varepsilon.
$$
Let $q(n)=n^b$ for some $b$ sufficiently large (e.g., $b=\deg(P)+1$) so that $Q=(q(n),p_1(n)+q(n),\dots,p_k(n)+q(n))$ is a polynomial system of rationally independent non-linear polynomials. Since $T$ is measure-preserving we have
\begin{equation}\label{contradiction}\mu(T^{q(n)}A\cap T^{q(n)+p_1(n)}A\cap T^{q(n)+p_2(n)}A\cap\dots\cap T^{q(n)+p_k(n)}A) \leq \mu(A)^{k+1} - \varepsilon
\end{equation}
for all $n\not\in S_\varepsilon.$
Denote $f = E(\ind{A} \mid K_{\rat}(X))$. It is a well known fact that the rational Kronecker factor can be approximated by finite systems, so take $r > 1$ such that for $f_r  = E(f\mid K_r(X))$, 
\begin{equation}\label{rapprox}\|f_r - f\|_{L^2(X)}< \frac{\varepsilon}{2(k+1)}.
\end{equation}

Put
$$
c = \avgip \psi^Q_r(n).
$$

Decomposing $F_N=(F_N\cap S_\varepsilon) \bigcup (F_N\cap S_\varepsilon^C)$ we see from \eqref{contradiction} and Proposition \ref{ratkroncharpsi} that,

\begin{align*}\lim_{N\rightarrow \infty}\E_{n\in F_N} \frac{1}{c}\psi_r^Q(n)\cdot \int_X T^{-q(n)}f\cdot\prod_{i=1}^k T^{-p_i(n)-q(n)}f\,d\mu&=\lim_{N\rightarrow \infty}\E_{n\in F_N} \frac{1}{c}\psi_r^Q(n)\cdot \mu(T^{q(n)}A\cap T^{p_1(n)+q(n)}A\cap...\cap T^{p_k(n)+q(n)}A)\\&\leq \lim_{N\rightarrow \infty }\left (\frac{|S_\varepsilon\cap F_N|}{c|F_N|} +\frac{1}{|F_N|}\sum_{n\in S_\varepsilon^C\cap F_N} \frac{1}{c}\psi^Q_r(n)\cdot (\mu(A)^{k+1}-\varepsilon) \right ). 
\end{align*}
In particular, by \eqref{Ssmall} we have
\begin{equation}\label{maineq1}
\lim_{N\rightarrow \infty}\E_{n\in F_N} \frac{1}{c}\psi_r^Q(n)\cdot \int_X T^{-q(n)}f\cdot\prod_{i=1}^k T^{-p_i(n)-q(n)}f\,d\mu  \leq  \mu(A)^{k+1}-\varepsilon.
\end{equation}

\noindent Applying the Cauchy-Schwarz inequality and \eqref{rapprox} iteratively, we see that for all  $a_1,...,a_k\in \Z$ we have:
$$
\left | \int_X f\cdot T^{a_1} {f} \cdot T^{a_2} {f} \dots T^{a_k} {f}\,d\mu - \int_X f_r\cdot T^{a_1} f_r \cdot T^{a_2} f_r \dots T^{a_k} f_r\,d\mu\right | < \varepsilon/2.
$$
Therefore,
$$
\begin{aligned}
&\limsup _{N\rightarrow \infty} \left | \E_{n\in F_N}\int _{X}\frac{\psi^Q_r(n)}{c}T^{-q(n)}f\cdot T^{-p_1(n)-q(n)}{f}\cdot ... \cdot T^{-p_k(n)-q(n)}{f} \,d\mu\right | >\\ & \qquad \limsup _{N\rightarrow \infty} \left | \E_{n\in F_N}\int _{X}\frac{\psi^Q_r(n)}{c}T^{-q(n)}f_r\cdot T^{-p_1(n)-q(n)}f_r\cdot ... \cdot T^{-p_k(n)-q(n)}f_r \,d\mu\right |-\varepsilon.
\end{aligned}
$$
On the other hand, since $f_r$ is measurable with respect to $K_r(X)$ we get that if $r \mid a$, then $T^af_r = f_r$. Note that this happens for $q(n)$ and $p_i(n)+q(n)$ for all $i$, whenever $\psi_r^Q(n)$ is nonzero, so we have 
$$
\begin{aligned}
&\limsup _{N\rightarrow \infty} \left | \E_{n\in F_N}\int _{X}\frac{\psi^Q_r(n)}{c}T^{-q(n)}f_r\cdot T^{-p_1(n)-q(n)}f_r\cdot ... \cdot T^{-p_k(n)-q(n)}f_r \,d\mu\right | = \\ & \qquad
\limsup _{N\rightarrow \infty} \left | \E_{n\in F_N}\frac{\psi^Q_r(n)}{c}\int _{X} f_r^{k+1}\,d\mu\right | \geq \mu(A)^{k+1}
\end{aligned}
$$
where the last inequality follows from Jensen's inequality. We conclude $$
\begin{aligned}
&\limsup _{N\rightarrow \infty} \left | \E_{n\in F_N}\int _{X}\frac{\psi^Q_r(n)}{c}T^{-q(n)}f\cdot T^{-p_1(n)-q(n)}{f}\cdot ... \cdot T^{-p_k(n)-q(n)}{f} \,d\mu\right | > \mu(A)^{k+1}-\varepsilon,
\end{aligned}
$$
contradicting \eqref{maineq1}.
\end{proof}

\printbibliography

@misc{krashalom2025,
      title={Ergodic averages and the large intersection property along IP sets}, 
      author={Bryna Kra and Or Shalom},
      year={2025},
      eprint={2506.17771},
      archivePrefix={arXiv},
      primaryClass={math.DS},
      url={https://arxiv.org/abs/2506.17771}, 
}

@article{bergelson2014rigidity,
  title={Rigidity and non-recurrence along sequences},
  author={Bergelson, Vitali and del Junco, Andreas and Lema{\'n}czyk, M and Rosenblatt, Joseph},
  journal={Ergodic Theory Dynam. Systems},
  volume={34},
  number={5},
  pages={1464--1502},
  year={2014},
  publisher={Cambridge University Press}
}

@article {moreexamples,
    AUTHOR = {Donoso, Sebasti\'an and Le, Anh Ngoc and Moreira, Joel and
              Sun, Wenbo},
     TITLE = {Optimal lower bounds for multiple recurrence},
   JOURNAL = {Ergodic Theory Dynam. Systems},
  FJOURNAL = {Ergodic Theory and Dynamical Systems},
    VOLUME = {41},
      YEAR = {2021},
    NUMBER = {2},
     PAGES = {379--407},
      ISSN = {0143-3857,1469-4417},
   MRCLASS = {37A44 (11K36 37A30)},
  MRNUMBER = {4177289},
MRREVIEWER = {Song\ Shao},
       DOI = {10.1017/etds.2019.72},
       URL = {https://doi.org/10.1017/etds.2019.72},
}

@misc{shalomvectorspace,
      title={On the Furstenberg-Katznelson constant for the IP Szemeredi theorem over finite fields}, 
      author={Or Shalom},
      year={2026},
      eprint={2604.05768},
      archivePrefix={arXiv},
      primaryClass={math.DS},
      url={https://arxiv.org/abs/2604.05768}, 
}

@article {underrecurrent,
    AUTHOR = {Boshernitzan, Michael and Frantzikinakis, Nikos and Wierdl,
              M\'at\'e},
     TITLE = {Under-recurrence in the {K}hintchine recurrence theorem},
   JOURNAL = {Israel J. Math.},
  FJOURNAL = {Israel Journal of Mathematics},
    VOLUME = {222},
      YEAR = {2017},
    NUMBER = {2},
     PAGES = {815--840},
      ISSN = {0021-2172,1565-8511},
   MRCLASS = {37A25 (37B20)},
  MRNUMBER = {3722267},
MRREVIEWER = {Song\ Shao},
       DOI = {10.1007/s11856-017-1606-8},
       URL = {https://doi.org/10.1007/s11856-017-1606-8},
}

@article {Hardyexamples,
    AUTHOR = {Bergelson, Vitaly and Moreira, Joel and Richter, Florian K.},
     TITLE = {Multiple ergodic averages along functions from a {H}ardy
              field: convergence, recurrence and combinatorial applications},
   JOURNAL = {Adv. Math.},
  FJOURNAL = {Advances in Mathematics},
    VOLUME = {443},
      YEAR = {2024},
     PAGES = {Paper No. 109597, 50},
      ISSN = {0001-8708,1090-2082},
   MRCLASS = {37A30 (05D10 11B30 28D05 37A44)},
  MRNUMBER = {4715172},
MRREVIEWER = {Thomas\ Ward},
       DOI = {10.1016/j.aim.2024.109597},
       URL = {https://doi.org/10.1016/j.aim.2024.109597},
}

@incollection {bergelsonultra,
    AUTHOR = {Bergelson, Vitaly},
     TITLE = {Ultrafilters, {IP} sets, dynamics, and combinatorial number
              theory},
 BOOKTITLE = {Ultrafilters across mathematics},
    SERIES = {Contemp. Math.},
    VOLUME = {530},
     PAGES = {23--47},
 PUBLISHER = {Amer. Math. Soc., Providence, RI},
      YEAR = {2010},
      ISBN = {978-0-8218-4833-3},
   MRCLASS = {05D10 (11B75 37A45)},
  MRNUMBER = {2757532},
MRREVIEWER = {Randall\ McCutcheon},
       DOI = {10.1090/conm/530/10439},
       URL = {https://doi.org/10.1090/conm/530/10439},
}

@incollection {bergelsonupdate,
    AUTHOR = {Bergelson, Vitaly},
     TITLE = {Ergodic {R}amsey theory---an update},
 BOOKTITLE = {Ergodic theory of {${\bf Z}^d$} actions ({W}arwick,
              1993--1994)},
    SERIES = {London Math. Soc. Lecture Note Ser.},
    VOLUME = {228},
     PAGES = {1--61},
 PUBLISHER = {Cambridge Univ. Press, Cambridge},
      YEAR = {1996},
      ISBN = {0-521-57688-1},
   MRCLASS = {28D05 (05A18 05D10 11B25)},
  MRNUMBER = {1411215},
MRREVIEWER = {Karl\ Petersen},
       DOI = {10.1017/CBO9780511662812.002},
       URL = {https://doi.org/10.1017/CBO9780511662812.002},
}

@article {BKM,
    AUTHOR = {Bergelson, Vitaly and H\aa land Knutson, Inger J. and
              McCutcheon, Randall},
     TITLE = {I{P}-systems, generalized polynomials and recurrence},
   JOURNAL = {Ergodic Theory Dynam. Systems},
  FJOURNAL = {Ergodic Theory and Dynamical Systems},
    VOLUME = {26},
      YEAR = {2006},
    NUMBER = {4},
     PAGES = {999--1019},
      ISSN = {0143-3857,1469-4417},
   MRCLASS = {37A15 (11B75 28D15)},
  MRNUMBER = {2246589},
MRREVIEWER = {Bryna\ Kra},
       DOI = {10.1017/S0143385706000010},
       URL = {https://doi.org/10.1017/S0143385706000010},
}

@article {BFM,
    AUTHOR = {Bergelson, Vitaly and Furstenberg, Hillel and McCutcheon,
              Randall},
     TITLE = {I{P}-sets and polynomial recurrence},
   JOURNAL = {Ergodic Theory Dynam. Systems},
  FJOURNAL = {Ergodic Theory and Dynamical Systems},
    VOLUME = {16},
      YEAR = {1996},
    NUMBER = {5},
     PAGES = {963--974},
      ISSN = {0143-3857,1469-4417},
   MRCLASS = {28D15 (05D10)},
  MRNUMBER = {1417769},
MRREVIEWER = {Thomas\ Ward},
       DOI = {10.1017/S0143385700010130},
       URL = {https://doi.org/10.1017/S0143385700010130},
}

@InCollection{bergelson2003minimal,
  Title                    = {{Minimal idempotents and ergodic Ramsey theory}},
  Author                   = {Bergelson, V.},
  Booktitle                = {Topics in Dynamics and Ergodic Theory, London Math.~Soc.~Lecture Note Ser.},
  Publisher                = {Cambridge Univ. Press},
  Year                     = {2003},
  Volume                   = {310},
}

@Article{furstenberg1977ergodic,
  Title                    = {{Ergodic behaviour of diagonal measures and a theorem of Szemer{\'e}di on arithmetic progressions}},
  Author                   = {H.~Furstenberg},
  Journal                  = {J. Anal. Math.},
  Year                     = {1977},
  Pages                    = {204-256},
  Volume                   = {31},
}

@article {BergelsonPET,
    AUTHOR = {Bergelson, V.},
     TITLE = {Weakly mixing {PET}},
   JOURNAL = {Ergodic Theory Dynam. Systems},
  FJOURNAL = {Ergodic Theory and Dynamical Systems},
    VOLUME = {7},
      YEAR = {1987},
    NUMBER = {3},
     PAGES = {337--349},
      ISSN = {0143-3857,1469-4417},
   MRCLASS = {28D05 (47A35)},
  MRNUMBER = {912373},
MRREVIEWER = {Steven\ Alpern},
       DOI = {10.1017/S0143385700004090},
       URL = {https://doi.org/10.1017/S0143385700004090},
}

@article {ABB,
    AUTHOR = {Ackelsberg, Ethan and Bergelson, Vitaly and Best, Andrew},
     TITLE = {Multiple recurrence and large intersections for abelian group
              actions},
   JOURNAL = {Discrete Anal.},
  FJOURNAL = {Discrete Analysis},
      YEAR = {2021},
     PAGES = {Paper No. 18, 91},
      ISSN = {2397-3129},
       DOI = {10.19086/da},
       URL = {https://doi.org/10.19086/da},
}

@article{shalom2,
AUTHOR = {Shalom, Or}, 
TITLE= {Multiple ergodic averages in abelian groups and Khintchine type recurrence},
Journal = {Trans. Amer. Math. Soc. 375 (2022), 2729-2761}, 
Year = 2021
}

@article {ABS,
    AUTHOR = {Ackelsberg, Ethan and Bergelson, Vitaly and Shalom, Or},
     TITLE = {Khintchine-type recurrence for 3-point configurations},
   JOURNAL = {Forum Math. Sigma},
  FJOURNAL = {Forum of Mathematics. Sigma},
    VOLUME = {10},
      YEAR = {2022},
     PAGES = {Paper No. e107, 57},
      ISSN = {2050-5094},
   MRCLASS = {37A15 (05D10 37A30)},
  MRNUMBER = {4519061},
       DOI = {10.1017/fms.2022.97},
       URL = {https://doi.org/10.1017/fms.2022.97},
}

@article {FranKuca,
    AUTHOR = {Frantzikinakis, Nikos and Kuca, Borys},
     TITLE = {Joint ergodicity for commuting transformations and
              applications to polynomial sequences},
   JOURNAL = {Invent. Math.},
  FJOURNAL = {Inventiones Mathematicae},
    VOLUME = {239},
      YEAR = {2025},
    NUMBER = {2},
     PAGES = {621--706},
      ISSN = {0020-9910,1432-1297},
   MRCLASS = {37A44 (05D10 11B30 28D05)},
  MRNUMBER = {4850605},
MRREVIEWER = {Ryo\ Moore},
       DOI = {10.1007/s00222-024-01313-w},
       URL = {https://doi.org/10.1007/s00222-024-01313-w},
}

@misc{jerg,
      title={Resolving the joint ergodicity problem for Hardy sequences}, 
      author={Sebastián Donoso and Andreas Koutsogiannis and Borys Kuca and Wenbo Sun and Konstantinos Tsinas},
      year={2025},
      eprint={2506.20459},
      archivePrefix={arXiv},
      primaryClass={math.DS},
      url={https://arxiv.org/abs/2506.20459}, 
}

@article {AckelsbergBergelson,
    AUTHOR = {Ackelsberg, Ethan and Bergelson, Vitaly},
     TITLE = {Multiple recurrence and popular differences for polynomial
              patterns in rings of integers},
   JOURNAL = {Math. Proc. Cambridge Philos. Soc.},
  FJOURNAL = {Mathematical Proceedings of the Cambridge Philosophical
              Society},
    VOLUME = {176},
      YEAR = {2024},
    NUMBER = {2},
     PAGES = {239--278},
      ISSN = {0305-0041,1469-8064},
   MRCLASS = {37A30 (05D05 11B30 11R09 37A15 37A44)},
  MRNUMBER = {4706769},
MRREVIEWER = {Bryna\ Kra},
       DOI = {10.1017/s030500412300049x},
       URL = {https://doi.org/10.1017/s030500412300049x},
}

@incollection {BLcubic,
    AUTHOR = {Bergelson, V. and Leibman, A.},
     TITLE = {Cubic averages and large intersections},
 BOOKTITLE = {Recent trends in ergodic theory and dynamical systems},
    SERIES = {Contemp. Math.},
    VOLUME = {631},
     PAGES = {5--19},
 PUBLISHER = {Amer. Math. Soc., Providence, RI},
      YEAR = {2015},
      ISBN = {978-1-4704-0931-9},
   MRCLASS = {28D15 (05D10)},
  MRNUMBER = {3330334},
MRREVIEWER = {Robert\ Samuel\ Simon},
       DOI = {10.1090/conm/631/12592},
       URL = {https://doi.org/10.1090/conm/631/12592},
}

@incollection {MR1412607,
    AUTHOR = {Furstenberg, Hillel and Weiss, Benjamin},
     TITLE = {A mean ergodic theorem for
              {$(1/N)\sum^N_{n=1}f(T^nx)g(T^{n^2}x)$}},
 BOOKTITLE = {Convergence in ergodic theory and probability ({C}olumbus,
              {OH}, 1993)},
    SERIES = {Ohio State Univ. Math. Res. Inst. Publ.},
    VOLUME = {5},
     PAGES = {193--227},
 PUBLISHER = {de Gruyter, Berlin},
      YEAR = {1996},
      ISBN = {3-11-014219-8},
   MRCLASS = {28D05},
  MRNUMBER = {1412607},
MRREVIEWER = {Idris\ Assani},
}

@article {leibmanpointwise,
    AUTHOR = {Leibman, A.},
     TITLE = {Pointwise convergence of ergodic averages for polynomial
              sequences of translations on a nilmanifold},
   JOURNAL = {Ergodic Theory Dynam. Systems},
  FJOURNAL = {Ergodic Theory and Dynamical Systems},
    VOLUME = {25},
      YEAR = {2005},
    NUMBER = {1},
     PAGES = {201--213},
      ISSN = {0143-3857,1469-4417},
   MRCLASS = {37A17 (22F30 28D15)},
  MRNUMBER = {2122919},
MRREVIEWER = {Alexander\ Gorodnik},
       DOI = {10.1017/S0143385704000215},
       URL = {https://doi.org/10.1017/S0143385704000215},
}

@incollection {totergodic,
    AUTHOR = {Frantzikinakis, Nikos and Kra, Bryna},
     TITLE = {Polynomial averages converge to the product of integrals},
      NOTE = {Probability in mathematics},
   JOURNAL = {Israel J. Math.},
  FJOURNAL = {Israel Journal of Mathematics},
    VOLUME = {148},
      YEAR = {2005},
     PAGES = {267--276},
      ISSN = {0021-2172,1565-8511},
   MRCLASS = {37A25 (22E25 28D05 37A15 47A35)},
  MRNUMBER = {2191231},
MRREVIEWER = {Anish\ Ghosh},
       DOI = {10.1007/BF02775439},
       URL = {https://doi.org/10.1007/BF02775439},
}

@article {indepoly,
    AUTHOR = {Frantzikinakis, Nikos and Kra, Bryna},
     TITLE = {Ergodic averages for independent polynomials and applications},
   JOURNAL = {J. London Math. Soc. (2)},
  FJOURNAL = {Journal of the London Mathematical Society. Second Series},
    VOLUME = {74},
      YEAR = {2006},
    NUMBER = {1},
     PAGES = {131--142},
      ISSN = {0024-6107,1469-7750},
   MRCLASS = {37A45 (28D05 37A30)},
  MRNUMBER = {2254556},
MRREVIEWER = {Thomas\ Ward},
       DOI = {10.1112/S0024610706023374},
       URL = {https://doi.org/10.1112/S0024610706023374},
}

@article {hostkranil,
    AUTHOR = {Host, Bernard and Kra, Bryna},
     TITLE = {Nonconventional ergodic averages and nilmanifolds},
   JOURNAL = {Ann. of Math. (2)},
  FJOURNAL = {Annals of Mathematics. Second Series},
    VOLUME = {161},
      YEAR = {2005},
    NUMBER = {1},
     PAGES = {397--488},
      ISSN = {0003-486X,1939-8980},
   MRCLASS = {37A05 (28D05)},
  MRNUMBER = {2150389},
MRREVIEWER = {Randall\ McCutcheon},
       DOI = {10.4007/annals.2005.161.397},
       URL = {https://doi.org/10.4007/annals.2005.161.397},
}

@article {ipszemeredi,
    AUTHOR = {Furstenberg, H. and Katznelson, Y.},
     TITLE = {An ergodic {S}zemer\'edi theorem for {IP}-systems and
              combinatorial theory},
   JOURNAL = {J. Analyse Math.},
  FJOURNAL = {Journal d'Analyse Math\'ematique},
    VOLUME = {45},
      YEAR = {1985},
     PAGES = {117--168},
      ISSN = {0021-7670,1565-8538},
   MRCLASS = {28D05 (11B05 20M20)},
  MRNUMBER = {833409},
MRREVIEWER = {B.\ Volkmann},
       DOI = {10.1007/BF02792547},
       URL = {https://doi.org/10.1007/BF02792547},
}

@article {polycharfactor,
    AUTHOR = {Leibman, A.},
     TITLE = {Convergence of multiple ergodic averages along polynomials of
              several variables},
   JOURNAL = {Israel J. Math.},
  FJOURNAL = {Israel Journal of Mathematics},
    VOLUME = {146},
      YEAR = {2005},
     PAGES = {303--315},
      ISSN = {0021-2172,1565-8511},
   MRCLASS = {28D05 (47A35)},
  MRNUMBER = {2151605},
MRREVIEWER = {U.\ Krengel},
       DOI = {10.1007/BF02773538},
       URL = {https://doi.org/10.1007/BF02773538},
}

@article {ziegler,
    AUTHOR = {Ziegler, Tamar},
     TITLE = {Universal characteristic factors and {F}urstenberg averages},
   JOURNAL = {J. Amer. Math. Soc.},
  FJOURNAL = {Journal of the American Mathematical Society},
    VOLUME = {20},
      YEAR = {2007},
    NUMBER = {1},
     PAGES = {53--97},
      ISSN = {0894-0347,1088-6834},
   MRCLASS = {37A30 (28D05 37A25)},
  MRNUMBER = {2257397},
MRREVIEWER = {Randall\ McCutcheon},
       DOI = {10.1090/S0894-0347-06-00532-7},
       URL = {https://doi.org/10.1090/S0894-0347-06-00532-7},
}

@article {largelinear,
    AUTHOR = {Bergelson, Vitaly and Host, Bernard and Kra, Bryna},
     TITLE = {Multiple recurrence and nilsequences},
      NOTE = {With an appendix by Imre Ruzsa},
   JOURNAL = {Invent. Math.},
  FJOURNAL = {Inventiones Mathematicae},
    VOLUME = {160},
      YEAR = {2005},
    NUMBER = {2},
     PAGES = {261--303},
      ISSN = {0020-9910,1432-1297},
   MRCLASS = {37A30 (05D10 28D05 37A05)},
  MRNUMBER = {2138068},
MRREVIEWER = {Randall\ McCutcheon},
       DOI = {10.1007/s00222-004-0428-6},
       URL = {https://doi.org/10.1007/s00222-004-0428-6},
}

@article {polyipszemeredi,
    AUTHOR = {Bergelson, Vitaly and McCutcheon, Randall},
     TITLE = {An ergodic {IP} polynomial {S}zemer\'edi theorem},
   JOURNAL = {Mem. Amer. Math. Soc.},
  FJOURNAL = {Memoirs of the American Mathematical Society},
    VOLUME = {146},
      YEAR = {2000},
    NUMBER = {695},
     PAGES = {viii+106},
      ISSN = {0065-9266,1947-6221},
   MRCLASS = {28D15 (05D10 11B05)},
  MRNUMBER = {1692634},
MRREVIEWER = {Thomas\ Ward},
       DOI = {10.1090/memo/0695},
       URL = {https://doi.org/10.1090/memo/0695},
}

@incollection {MR1784213,
    AUTHOR = {Bergelson, Vitaly and Host, Bernard and McCutcheon, Randall
              and Parreau, Fran\c cois},
     TITLE = {Aspects of uniformity in recurrence},
      NOTE = {Dedicated to the memory of Anzelm Iwanik},
   JOURNAL = {Colloq. Math.},
  FJOURNAL = {Colloquium Mathematicum},
    VOLUME = {84/85},
      YEAR = {2000},
     PAGES = {549--576},
      ISSN = {0010-1354,1730-6302},
   MRCLASS = {37A15},
  MRNUMBER = {1784213},
MRREVIEWER = {Thomas\ Ward},
       DOI = {10.4064/cm-84/85-2-549-576},
       URL = {https://doi.org/10.4064/cm-84/85-2-549-576},
}

@misc{bergelson2026setslargevaluespolynomial,
      title={Sets of large values of polynomial multi-correlation functions}, 
      author={Vitaly Bergelson and Rigoberto Zelada},
      year={2026},
      eprint={2605.23050},
      archivePrefix={arXiv},
      primaryClass={math.DS},
      url={https://arxiv.org/abs/2605.23050}, 
}

@article {ackelsbergshalomrichter,
    AUTHOR = {Ackelsberg, Ethan and Richter, Florian K. and Shalom, Or},
     TITLE = {On the maximal spectral type of nilsystems},
   JOURNAL = {Proc. Amer. Math. Soc. Ser. B},
  FJOURNAL = {Proceedings of the American Mathematical Society. Series B},
    VOLUME = {11},
      YEAR = {2024},
     PAGES = {469--480},
      ISSN = {2330-1511},
   MRCLASS = {37A30 (37A17)},
  MRNUMBER = {4797105},
MRREVIEWER = {Song\ Shao},
       DOI = {10.1090/bproc/229},
       URL = {https://doi.org/10.1090/bproc/229},
}

\end{document}